\DeclareMathAlphabet{\itbf}{OML}{cmm}{b}{it}
\def\bq{{{\itbf q}}}
\def\br{{{\itbf r}}}
\def\by{{{\itbf y}}}
\def\bx{{{\itbf x}}}
\def\bk{{{\itbf k}}}
\def\bzeta{{\boldsymbol{\zeta}}}
\def\bxi{{\boldsymbol{\xi}}}
\def\eps{{\varepsilon}}
\newcommand{\RR}{\mathbb{R}}
\newcommand{\EE}{\mathbb{E}}
\begin{document}  
 
 %[Speckle Memory Effect in the Frequency Domain]
 \title{Speckle Memory Effect in the Frequency Domain and Stability in Time-Reversal Experiments} 

\author{Josselin Garnier\footnotemark[1] 
\and Knut S\O lna\footnotemark[2]}  

\maketitle

\author{Josselin
Garnier\thanks{\footnotesize Centre de Math\'ematiques Appliqu\'ees, Ecole Polytechnique,
91128 Palaiseau Cedex, France (josselin.garnier@polytechnique.edu)} 
\and Knut S\o lna\thanks{\footnotesize Department of Mathematics, 
University of California, Irvine CA 92697
(ksolna@math.uci.edu)}
}

\renewcommand{\thefootnote}{\arabic{footnote}}

\begin{abstract}
When waves propagate through a complex   medium like the turbulent atmosphere the wave field becomes
incoherent and the wave intensity forms a complex speckle pattern. 
In this paper we study a speckle memory effect in the frequency domain and some of its consequences.
This effect means that certain properties of the speckle pattern produced by wave transmission through a randomly scattering medium 
is  preserved when shifting the frequency of the illumination.
The speckle memory effect  is  characterized 
via  a detailed novel analysis of the fourth-order moment of the random paraxial Green's function at four different frequencies.
We arrive at a precise   characterization of the frequency memory effect and what governs the strength of the memory. 
As an application we quantify the statistical stability of time-reversal wave refocusing through a randomly scattering medium
in the paraxial or beam regime. Time reversal refers to the situation when a transmitted wave field is recorded on a time-reversal mirror then
time reversed  and sent back into the complex medium. The reemitted wave field then refocuses at the original source point.  
We compute the mean of the refocused wave and identify a novel
quantitative description of its variance
in terms of the radius of the time-reversal mirror, the size of its elements, the source bandwidth
and  the statistics of the random medium fluctuations.
 \end{abstract}

\begin{keywords}
Waves in random media, multiple scattering, 
time reversal, parabolic approximation, broadband beam, memory effect.
\end{keywords}

\begin{AMS}
60H15, 35R60, 35L05.
\end{AMS}

\pagestyle{myheadings}
\thispagestyle{plain}
\markboth{J. Garnier and K. S\o lna}
{Speckle Memory Effect in the Frequency Domain}

% \tableofcontents 
  
\section{Introduction}
For imaging or communication purposes  it is important to understand how waves propagate through a randomly scattering medium. 
The quantities of interest can generally be expressed in terms of statistical averages.
Usually the first- and second-order moments of the Green's function are sufficient to characterize them.
However in some circumstances fourth-order moments are needed, for instance
for scintillation problems \cite{fps,garniers3} or the analysis of intensity correlation-based imaging \cite{bertolotti,katz12}.
For imaging with  narrow or broad band signals it is also important to characterize multifrequency moments \cite{borcea}.  
We consider here the 
paraxial regime corresponding  to high-frequency and long-range propagation of a wave beam.    
The paraxial regime is physically relevant and it models  many situations, for instance laser beam propagation \cite{andrews,strohbehn}
or underwater acoustics \cite{tappert}.
The equations that govern the evolution of the fourth-order moments in the paraxial regime
have been known for a long time \cite{uscinski}-\cite[Sec.~20.18]{ishimaru}.
The solution  of the fourth-order moment problem 
was recently  analyzed and discussed in \cite{garniers3,garniers4} 
when  the four Green's functions involved in the fourth-order moment are evaluated at  the same frequency.
In this paper, we extend this result to the case when the four Green's functions have different  frequencies.
This new result makes it possible to analyze a number of configurations in wave propagation and imaging.  
Here we consider  two main motivating applications:

- The first motivating application  is time-harmonic wave focusing through a random medium.
Wavefront-shaping-based schemes 
\cite{popoff14,rotter,vellekoop10,vellekoop07,vellekoop08} 
have indeed attracted attention in recent years, particularly because of their potential applications
for focusing and imaging through scattering media.
The primary goal is to focus monochromatic light through a layer of strongly scattering material.
This is a challenging problem as multiple scattering of waves scrambles the transmitted light into random interference intensity patterns called speckle patterns \cite{goodman}.
This is shown in Figure \ref{fig:0a}(a): without control of the source the intensity of the transmitted
field  forms a complex speckle pattern. 
However, by using a spatial light modulator (SLM) before the scattering medium, 
it is possible to focus light as first demonstrated in \cite{vellekoop07}. 
Indeed, the elements of the SLM can impose phase shifts, and an optimization scheme makes it possible to choose the phase shifts so as to maximize the 
intensity transmitted at one target point  behind the scattering medium. This is shown in Figure \ref{fig:0a}(b). 
The optimal phase shifts depend on the medium and they are equal to the opposite phases of the field emitted by a point source at the target point 
and recorded in the plane of the SLM \cite{mosk12}.
In other words, the wavefront-shaping optimization procedure is equivalent to 
phase conjugation or time reversal. This is illustrated in Figure \ref{fig:0b} which describes a time-reversal experiment.
A time-reversal experiment consists of two steps and it is based on the use of a special device, a time-reversal mirror (TRM), that is used as an array of receivers in the first step and as an array of sources in the second step.
The first step is described in picture (a): a point source emits
a wave that propagates through a scattering medium and that is recorded by the TRM.
The second step is described in picture (b): the recorded signals are time-reversed and reemitted into the same medium by the TRM, and the reemitted waves then focus at the original  source point. At a single frequency this process corresponds to
phase conjugation or reemission of  the complex conjugate of the recorded wave field by the TRM;
with some abuse of notation we refer to this process 
as time-harmonic time reversal. 
It has been shown that the speckle memory effect \cite{feng88,freund88}
allows to focus on a neighboring point close to the original target point \cite{vellekoop10,vellekoop07,vellekoop08},
which opens the way for  the transmission of spatial patterns \cite{garniers5,garniers6,garniers7,popoff10}.
Indeed, one the main manifestations of the spatial memory effect is the following one:
%if the source in Figure \ref{fig:0b}(a)
%is slightly shifted, then the effect is a slight modulation of the (speckle) field recorded by the TRM.
%Then b
By applying an appropriate and deterministic spatial phase modulation to the conjugated source field in the second step of the time-reversal experiment (Figure \ref{fig:0b}(b)) one can achieve  that 
the focusing  (red spot) in the bottom right plot is shifted.
By properly composing such 
modulated source fields one can transmit a pattern, see \cite{garniers5} for a detailed discussion.  
A main  question  we want to address here is whether such  speckle memory effects
can be exploited also  in the frequency domain.
In fact, we show that it is  possible to focus a time-harmonic signal with a different
frequency than the one 
of the field recorded  by the TRM in Figure \ref{fig:0b}(a). One can even focus a broadband pulse
and this opens the way to the transmission of short pulses, see 
 \cite{mosk12} for experimental verification of the frequency memory effect. 
 The process then  corresponds to using and processing the reference phase-conjugated field 
in Figure \ref{fig:0b}(b) in order to focus coherently time-harmonic waves with slightly shifted frequencies.  The reference  field or 
a `guide star' field may then be used over a frequency band to obtain focusing for pulses.   
The theoretical description of such a frequency memory effect has  so far been an open question. 
In Section \ref{sec:refocusth} we give a quantitative description of the effect of a frequency shift on refocusing,
which is directly related to the speckle memory effect in the frequency domain. We show that the speckle pattern 
is only slightly changed when  shifting the frequency so that we can use the same source phase field over
a range of frequencies and still obtain focusing for all frequencies in the band. 
A main result presented in Section \ref{sec:refocusth} is that the width $\Omega$ of the frequency 
band  for which  we can use the same recorded and conjugated field at the TRM and still achieve focusing 
is determined  by the speckle coherence frequency $\Omega_{\rm spec}$:
\begin{equation}
%  \Omega \lesssim  2 \Omega_{\rm spec}  \hbox{~~for~~}  \Omega_{\rm spec}  :=  
%  T^{-1} \left( \frac{4 \ell_{\rm par}}{3 L} \right)   , 
  \Omega \lesssim  \Omega_{\rm spec} := \frac{\ell_{\rm par}} {L T}      , 
\end{equation}
 where $T=L/c_o$ is the travel time over the distance $L$ from the source to the TRM for 
a background wave speed $c_o$  and $\ell_{\rm par}$ is the paraxial distance introduced in (\ref{def:lpar}) below.
The paraxial distance  corresponds to the travel distance at which the paraxial description of the wave beam 
in the random medium breaks down and  is inversely  proportional to a 
measure of the lateral scattering strength in the random medium.
%, at this distance the correlation range 
%in the wave field saturates at a lower value corresponding to the wavelength.   
 It follows that for longer propagation distances and stronger medium
fluctuations the frequency band at which the frequency memory holds becomes narrower 
since  the  speckle pattern then becomes  more sensitive to a shift in the source frequency.       
  \begin{figure}
\begin{center}
\begin{picture}(330,160)
\put(0,117){{\bf (a)}}
\put(10,80){\includegraphics[width=7.7cm]{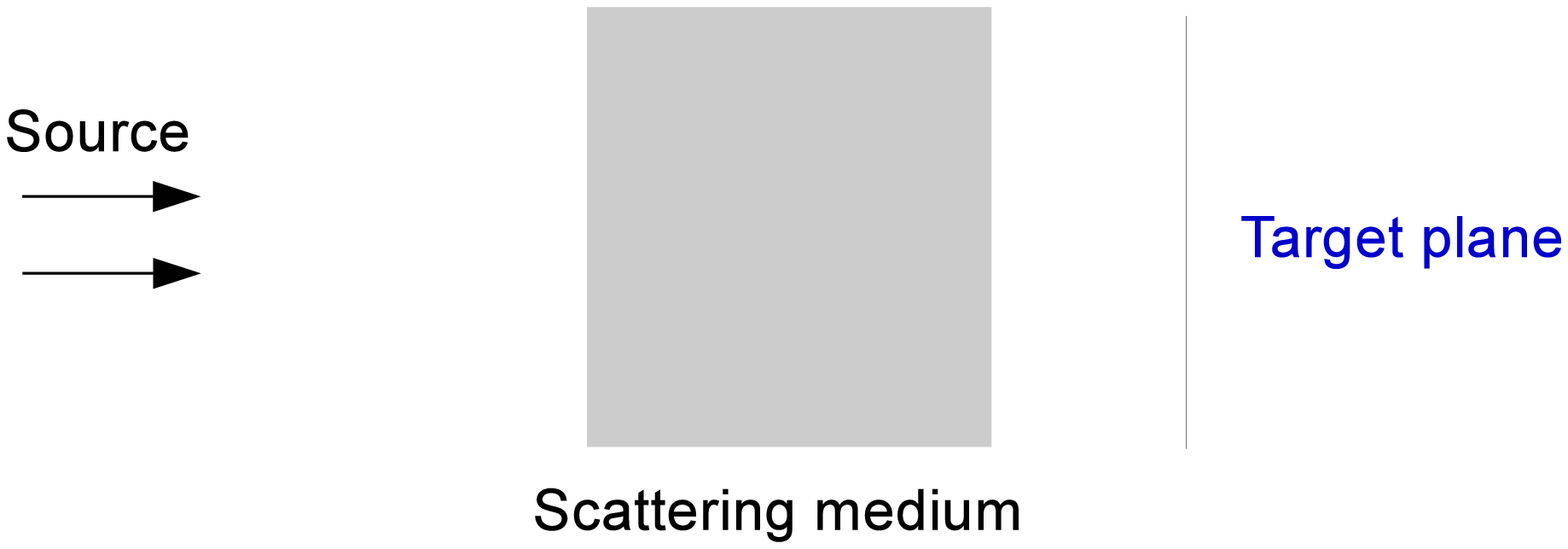}}
\put(225,84){\includegraphics[width=3.3cm]{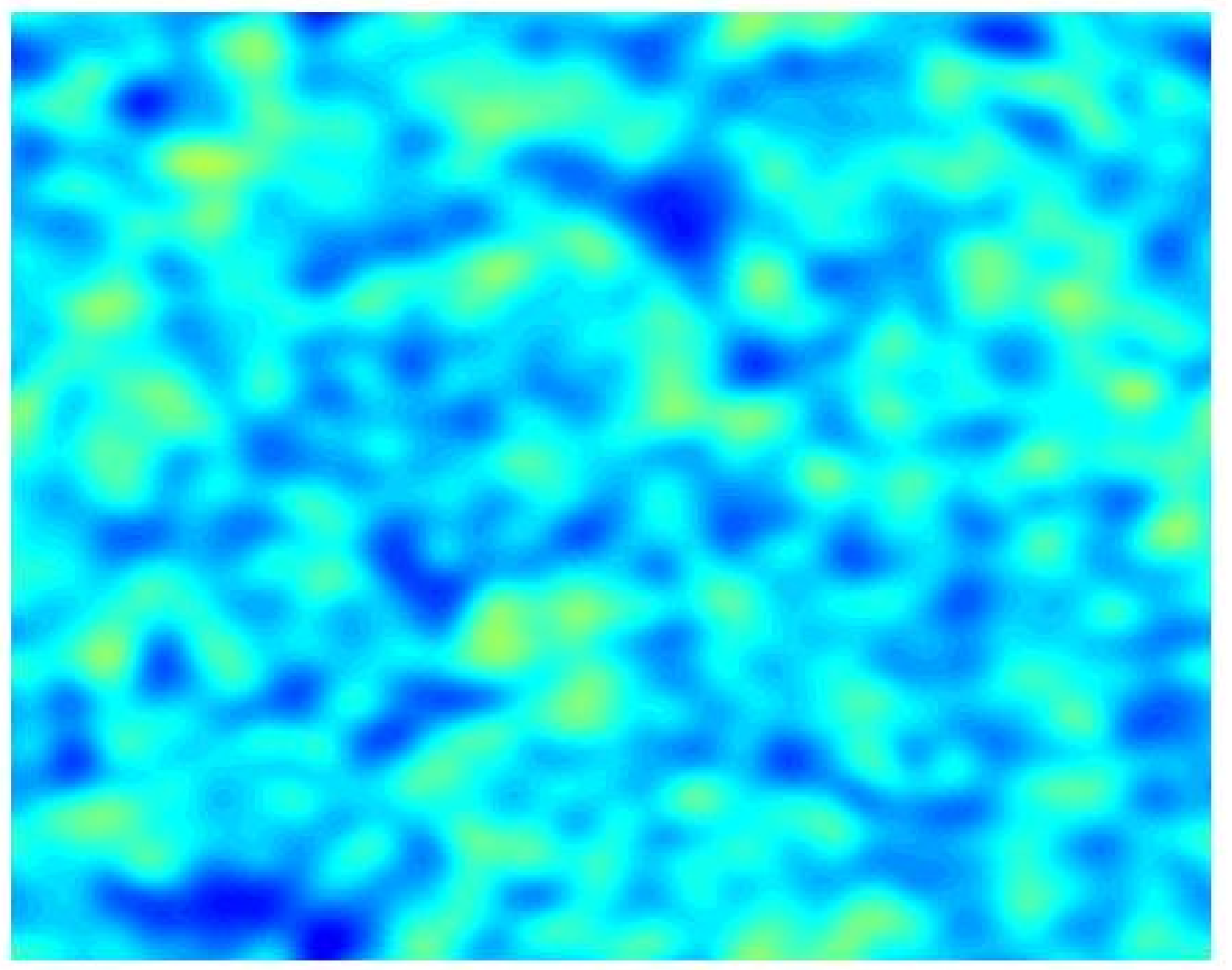}}
\put(0,37){{\bf (b)}} 
\put(10,0){\includegraphics[width=7.7cm]{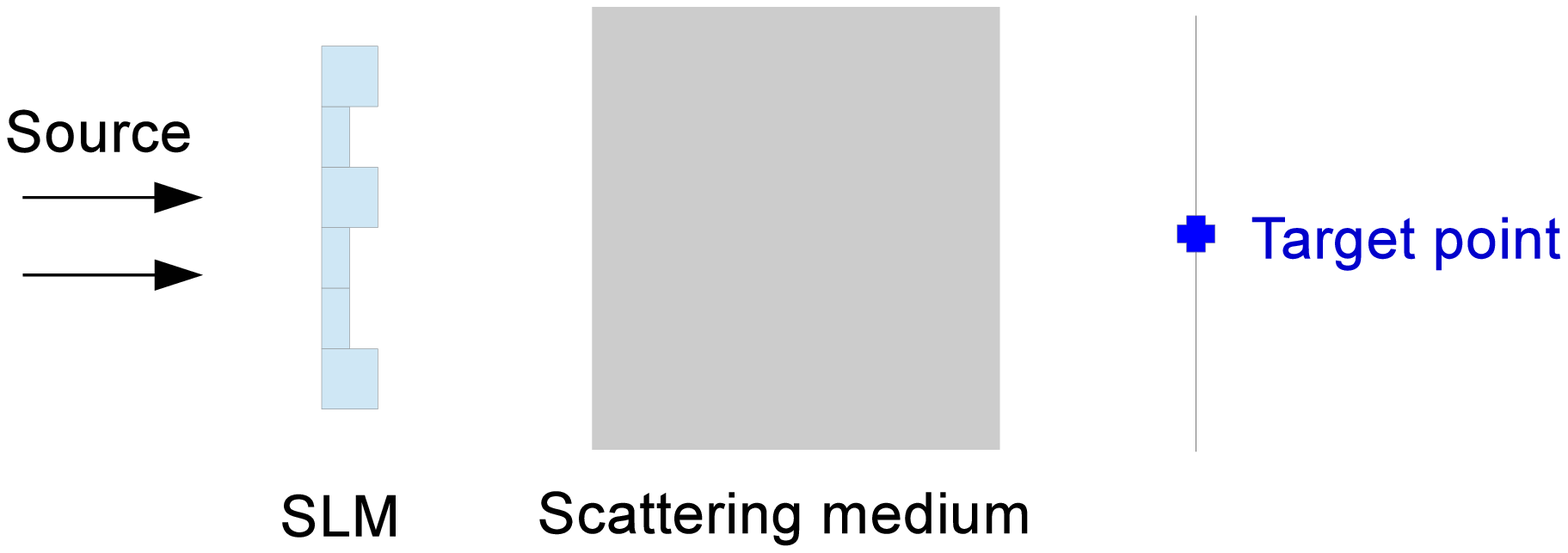}}
\put(225,4){\includegraphics[width=3.3cm]{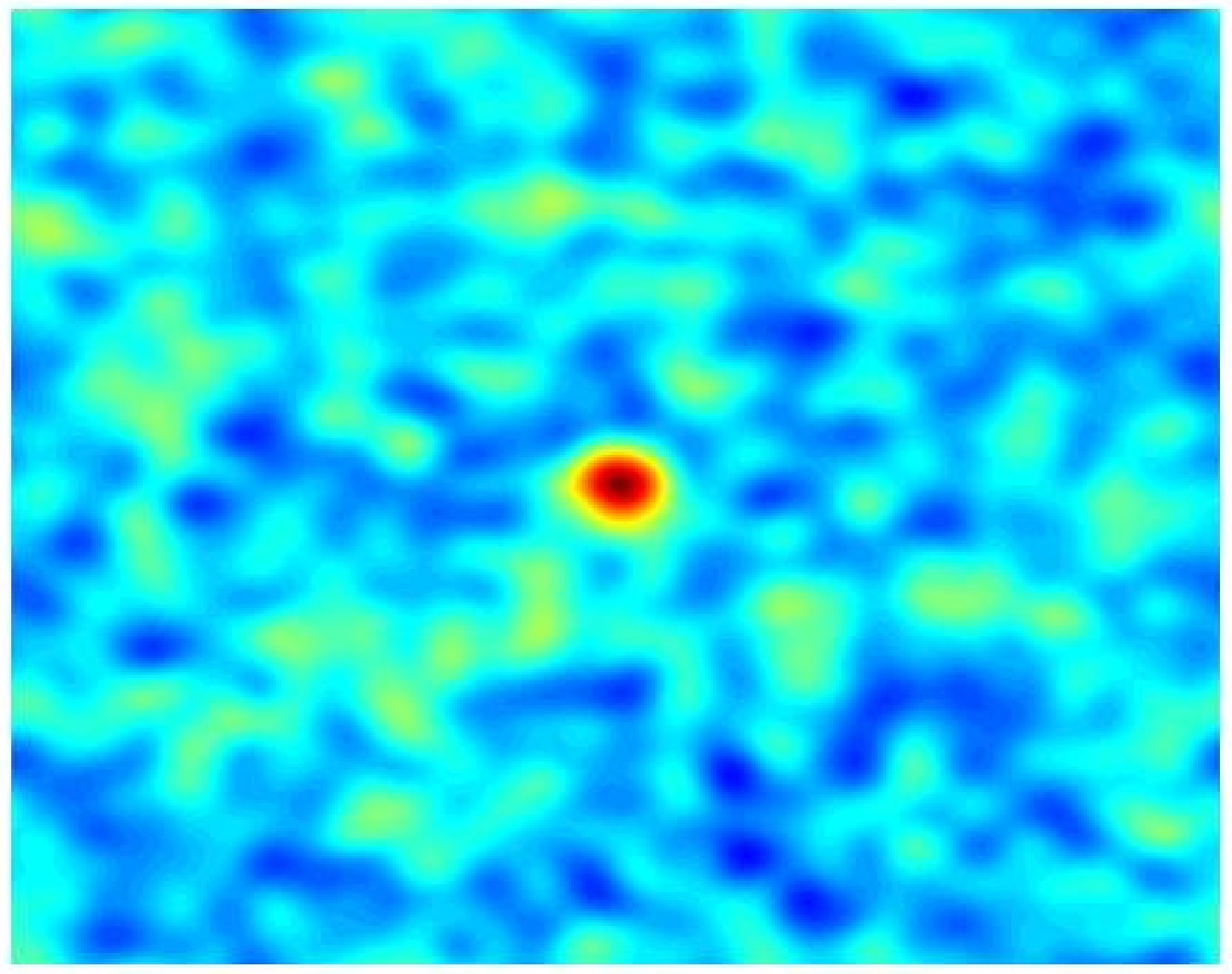}}
\end{picture}
\end{center}
\caption{Focusing wave through a scattering medium. Without any control 
one gets a speckle pattern in the target plane (a).
With a spatial light modulator (SLM) one can focus on a target point by imposing appropriate phase shifts (b) [From \cite{garniers5}].
\label{fig:0a} 
}
\end{figure}
\begin{figure}
\begin{center}
\begin{picture}(330,160)
\put(0,117){{\bf (a)}}
\put(10,80){\includegraphics[width=7.7cm]{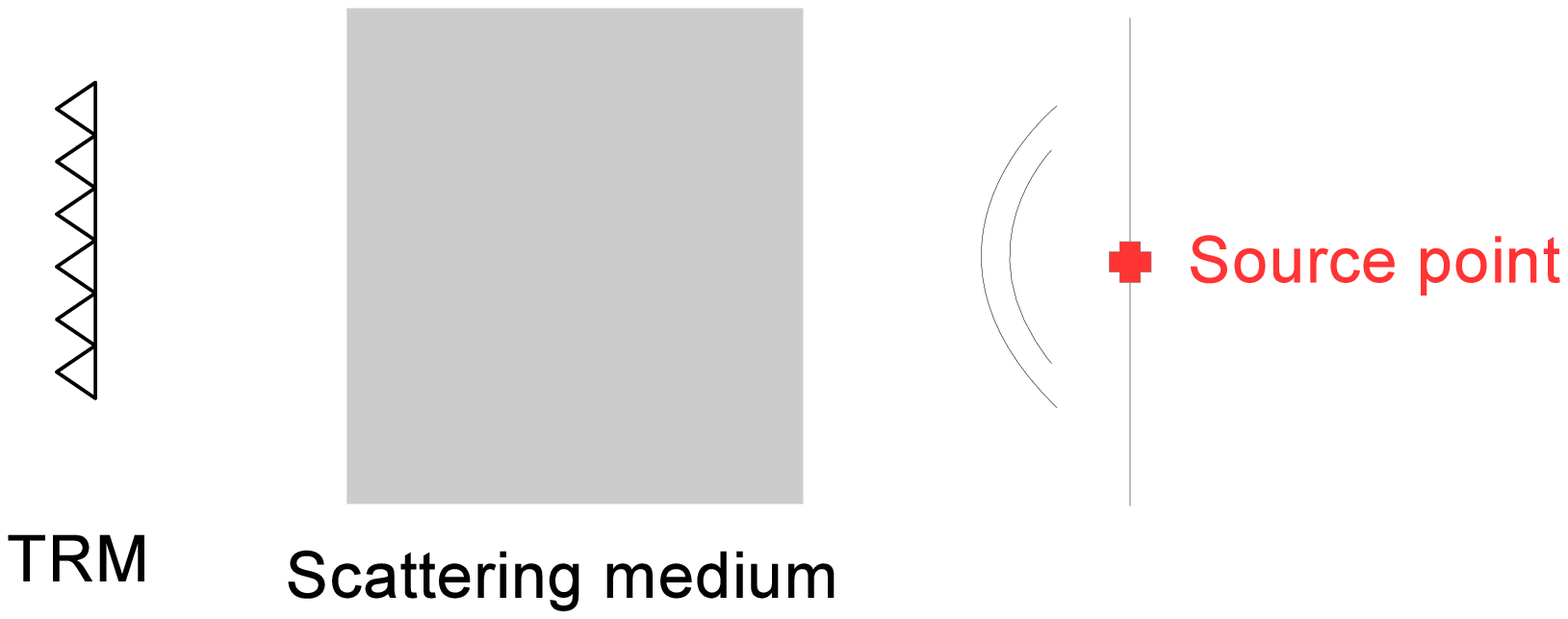}}
\put(0,37){{\bf (b)}} 
\put(10,0){\includegraphics[width=7.7cm]{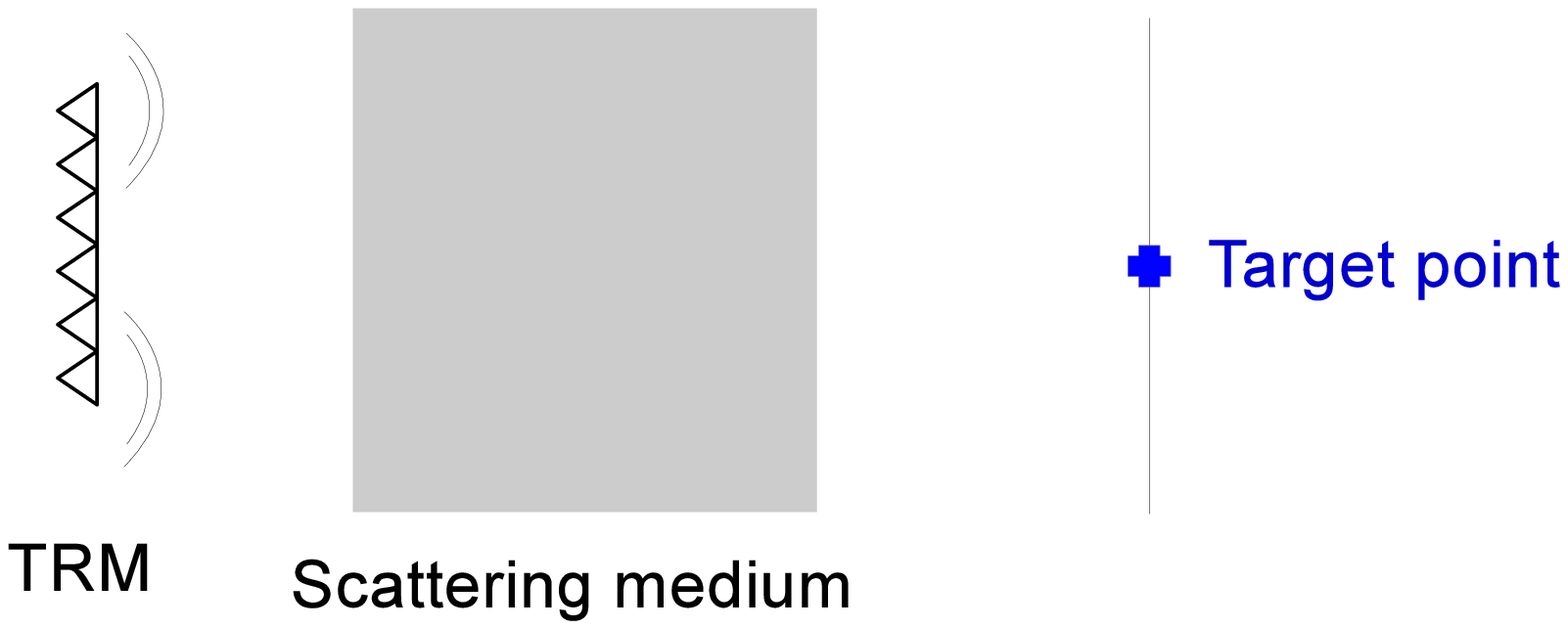}}
\put(205,-4){ \includegraphics[width=4.0cm]{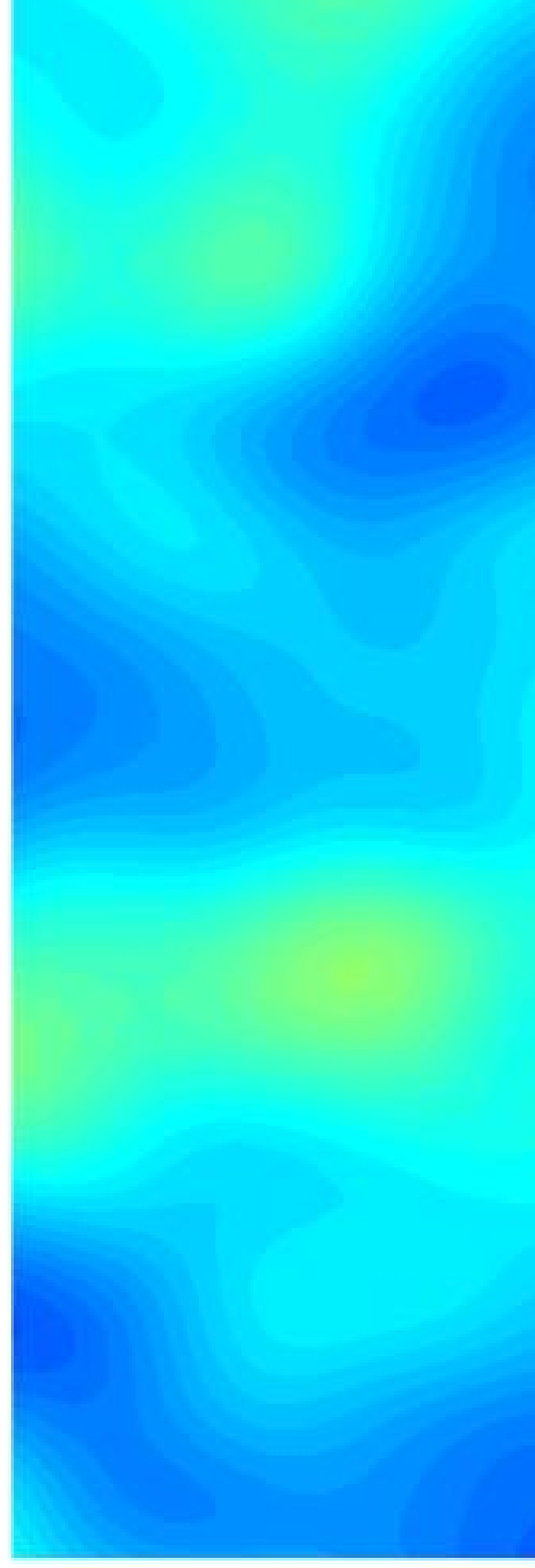}}
\end{picture}
\end{center}
\caption{Time-reversal experiment through a scattering medium. In the first step of the experiment (a)
a time-harmonic point source emits a wave that propagates through the scattering medium and is recorded by the time-reversal mirror (TRM) used as an array of receivers. 
In the second step of the experiment (b) the TRM is used as an array of sources, it
emits the complex-conjugated recorded field, and the wave refocuses at the original source location (the cross in the right image
stands for the original source location; the focal spot is centered at the cross) [From \cite{garniers5}].
\label{fig:0b} 
}
\end{figure}
 
- The second motivation for our multi-frequency analysis is statistical stability in  time reversal. Time reversal for waves in random media has indeed 
been studied theoretically, numerically, and experimentally (see the review \cite{fink00}).
As mentioned above when a wave is emitted by a point source and recorded by a TRM, which then reemits the time-reversed recorded signals, then in general the 
wave refocuses on the original source location, see Figure \ref{fig:0b}. 
It moreover turns out that refocusing is enhanced when the medium is randomly scattering,
and that the time-reversed refocused wave is statistically stable, in the sense that its shape
depends on the statistical properties of the random medium, but not on its particular realization.
The phenomenon of focusing enhancement has been analyzed quantitatively \cite{blomgren,book1,lerosey07,PRS04}.
Statistical stability of time-reversal refocusing for broadband pulses is usually qualitatively proved by invoking the fact that the
time-reversed refocused wave is the superposition of many independent frequency components, which gives the self-averaging
property in the time domain \cite{blomgren,PRS04}.
However, so far, there has not been a fully satisfactory analysis of the statistical stability phenomenon, because
it involves the evaluation of a fourth-order moment of the Green's function of the random wave equation.
This problem has been addressed in \cite{ishimaru07} in a situation similar to  the one addressed in this paper,
but using the circular complex Gaussian assumption for the evaluation of the fourth-order moments
that are needed for the analysis. Here we will not make use of this assumption, rather we will 
prove that the fourth-order moments can be computed and this allows us to give a detailed analysis
of the statistical stability of the time-reversed refocused wave.
In Section \ref{sec:refocusbb} we quantify time-reversal refocusing and stability as functions of the size of the TRM, the size of its elements,
the source bandwidth, and the statistical properties of the random medium.
The main results can summarized as follows: if the bandwidth $B$ of the source is small
so that $B\ll \Omega_{\rm spec}$ 
and also if the scattering is  strong enough so that 
 the spreading of the beam is large relative to its original width,
then the signal-to-noise ratio  (SNR) of the refocused wave 
is roughly  equal to  the  number of elements  in the TRM:
 \begin{equation}
    {\rm SNR}  \simeq N \hbox{~~for~~} N:= \left(\frac{r_0}{\rho_0}\right)^2  ,
\end{equation}
 with $r_0$ being the size of the TRM and $\rho_0$ the size of the elements.  
 If the bandwidth $B$ of the source is large so that $B \gg \Omega_{\rm spec}$ 
 and if scattering is strong,    then   
 \begin{equation}
    {\rm SNR}   \simeq \left( \frac{N}{8} \right) \left( \frac{B}{\Omega_{\rm spec}} \right)  . 
\end{equation}
This shows that the source bandwidth improves the statistical stability of the refocused wave,
provided it is larger than the speckle coherence frequency.
 This then quantifies the usual assertion found 
in the literature that the profile of the time-reversed field is self-averaging by independence of the frequency
components of the wave field and  clarifies  the hypotheses which ensure that such a result is valid.  
We remark here also that in the strongly scattering situation and small mirror elements it is a classic
result that the time-reversal refocusing resolution $R$  can be expressed as the Rayleigh
resolution formula $R\approx \lambda L / A_{\rm eff}$ evaluated at the central wavelength $\lambda$ and at the scattering-enhanced aperture $ A_{\rm eff}$ scaling with propagation distance as $L^{3/2}$ \cite{garniers5}. In the notation introduced here this means that
\begin{eqnarray}
   R \approx \lambda  \sqrt{\frac{ \ell_{\rm par}}{ L} }   ,
\end{eqnarray}   
where we need $ \ell_{\rm par} >  L$ for the paraxial
  approximation to be valid.  Note that this resolution measure is independent of the actual
TRM radius.   

% physical conjecture that the signal-to-noise ratio of the time-reversed refocused wave
%is given by the number of elements of the time-reversal mirror times the number of independent
%frequency components in the source bandwidth \cite{derode01,lerosey04}.
 
The paper is organized as follows.
First in Section \ref{sec:0} we outline  the main setting with scalar waves propagating  
in a random medium and summarize the main result regarding the  paraxial approximation that we use,
the solution of the  It\^o-Schr\"odinger equation.  
In Section \ref{sec:1} we describe the two main applications that we have introduced:
time-harmonic refocusing and broadband time reversal.
In Sections \ref{sec:3}-\ref{sec:4} we study in detail the second- and fourth-order moments of the paraxial Green's function
at different frequencies and how we get successively simpler expressions for the moments
by making further assumptions regarding the scaling regime. 
We quantify the focusing properties of the two main applications in terms of resolution and stability
in Sections \ref{sec:refocusth}-\ref{sec:refocusbb}. In Appendix \ref{app:a} we discuss 
in more detail the scaling regime that we use and how it relates to the   It\^o-Schr\"odinger equation
that is fundamental to our asymptotic moment analysis.      
%%%%%%%%%%%%%%%%%%%%%%%%%%%%

\section{Paraxial Waves in Random  Media }\label{sec:0} 
 We consider  scalar waves and 
assume the governing equation:
\begin{eqnarray}\label{eq:wave}
 (\partial_{z}^2 + \Delta_\bx) u
-\frac{n^2(z,\bx)}{c_o^2} \partial_{t}^2 u 
 =0 ,
\end{eqnarray}
for $(z,\bx) \in \RR \times \RR^{2} $,  the space coordinates.
 In (\ref{eq:wave})  $n(z,\bx)$ is the local index of refraction
that we model as random and we assume radiation conditions at infinity.
We remark that even though the scalar wave equation is simple and linear, the relation 
between the statistics of the index of refraction and the statistics of the 
wave field is highly 
nontrivial and nonlinear.
Originally motivated by elastic problems in geophysics, we assume that
the privileged propagation axis is the $z$-direction and will consider
beam waves propagating into the  $z$-direction, thus corresponding to the 
horizontal direction in Figures \ref{fig:0a} and \ref{fig:0b}.  
We model moreover the complex medium as a random medium 
and do this by letting  the local index of refraction in (\ref{eq:wave}) be 
parameterized  by
\begin{eqnarray}\label{eq:rm}
  n^2 (z,\bx) =   1 +     \nu(z,\bx)   ,  
\end{eqnarray} 
for $\nu$ being the centered random medium fluctuations.
We assume that $\nu$ is a stationary zero-mean 
random field  that is mixing in $z$ and with integrable correlations. 
 
It is now convenient to Fourier transform  in time:
\begin{eqnarray}\label{eq:deffourier}
\hat{u}(\omega,z,\bx) = \int_\RR
{u}(t,z,\bx) \exp \big( i \omega t \big) {d}t .
\end{eqnarray}
We then obtain the  Helmholtz or reduced wave equation :
\begin{eqnarray}\label{eq:helm}
(\partial_{z}^2+\Delta_\bx) \hat{u} + 
\frac{\omega^2}{c_o^2}n^2(z,\bx) \hat{u}=0 ,
\end{eqnarray}
with  $k=\omega/c_o$ being  the free space wavenumber. 
   
A particular solution of (\ref{eq:helm}) in the case of a homogeneous
medium $n \equiv 1$ is a  
plane wave propagating in the $z$ direction:
\begin{eqnarray*}
\hat{u}= \exp \Big( i\frac{\omega }{c_o}  z \Big) .
\end{eqnarray*}
We make the ansatz of a slowly-varying envelope 
around a plane wave going into the $z$-direction  
\begin{equation}
\hat{u}(\omega,z,\bx)  = 
\exp \Big( i\frac{\omega }{c_o}  z \Big) v(\omega,z,\bx)  .
\end{equation}
In the white-noise paraxial regime (which holds when the wavelength is much smaller than the correlation length of the medium and the beam radius, which are themselves much smaller than the propagation distance)  we can then model $v$ 
in terms of the solution of the following It\^o-Schr\"odinger equation:
\begin{equation}\label{eq:model0}
2 i k d v +  \Delta_\bx v \, dz +   k^2 
 v\circ dB(z,\bx) =0 .
\end{equation}
  In Appendix \ref{app:a} we discuss in detail the scaling assumptions of the
white-noise paraxial regime leading to the model  (\ref{eq:model0})  for computing 
moments of waves emitted from sources satisfying the scaling assumptions 
as outlined in the appendix.    
We remark that  the symbol $\circ$ stands for the Stratonovich stochastic integral,
  $B(z,\bx)$ is a real-valued  Brownian field over $[0,\infty) \times \RR^2$ with  covariance
 \begin{equation}
 \label{defB}
\EE[   {B}(z,\bx)  {B}(z',\bx') ] =  
 {\min\{z, z'\}}   {C}(\bx - \bx')   ,
\end{equation}
and $C$ is determined by the two-point statistics of the fluctuations of the random medium
as
\begin{equation}\label{def:C2} 
    C(\bx) = \int_{\RR} \EE[\nu(0,{\bf 0}) \nu(z,\bx) ] dz  ,
\end{equation} 
with $\nu$ being the random medium fluctuations in (\ref{eq:rm}).  
 Note therefore that in particular the width of $C$ is the correlation  length   of the medium fluctuations.
%In It\^o's form Eq.~(\ref{eq:model}) reads
%\begin{equation}
%\label{eq:modelito}
% d \hat{G}(z,\bx,\by)   =     
%          \frac{ i }{2k_0} \Delta_{\bx}   \hat{G}(z,\bx,\by) dz
%   +   \frac{ik_0}{2}   \hat{G} (z,\bx,\by )   d{B}(z,\bx) - \frac{k_0^2}{8} C({\bf 0}) \hat{G} (z,\bx,\by ) dz .
%\end{equation}
The  It\^o-Schr\"odinger
equation was analyzed for the first time in \cite{dawson84} and it was derived from first principles
by a multiscale analysis of the wave equation in a random medium in \cite{garniers1}.
 The model (\ref{eq:model0}) leads to closed equations for wave field moments of all orders.
 We discuss in the appendix  the first-order moment equation that is readily solvable.
 The second-order one-frequency moment equations are also explicitly solvable, while
 the fourth-order equations are not explicitly solvable in the white-noise
 paraxial regime, neither in the one-frequency nor in the multi-frequency cases. 
 However, in a secondary scaling regime 
 that we denote the scintillation regime we will be able to  solve both
 the second-order and fourth-order multi-frequency moments.   
 We will push through this moment analysis in Section \ref{sec:4}. 
 Before this, 
in Section \ref{sec:1},  we discuss  the detailed modeling of the two applications which motivates the particular form of the second- and fourth-order 
 multi-frequency  moments that we consider. In Section \ref{sec:3} we express these moments
 in terms of the moments of  the Green's function associated with the It\^o-Schr\"odinger equation
 (\ref{eq:model0}). 
   
\section{Time-Reversal Experiment}
\label{sec:1}%
%In this paper we denote the three-dimensional spatial variable by $(\bx,z)$, with $\bx \in \RR^2$ the transverse
%variable and $z\in \RR$ the longitudinal variable and thus  consider beam propagation in the $z$ direction.  
We assume that a TRM is located in the plane $z=0$. 
The  radius of the mirror is $R_{\rm m}$ and the radius of its elements is $\rho_0$.

\begin{figure}
\begin{center}
\begin{tabular}{c}
\includegraphics[width=4.0cm]{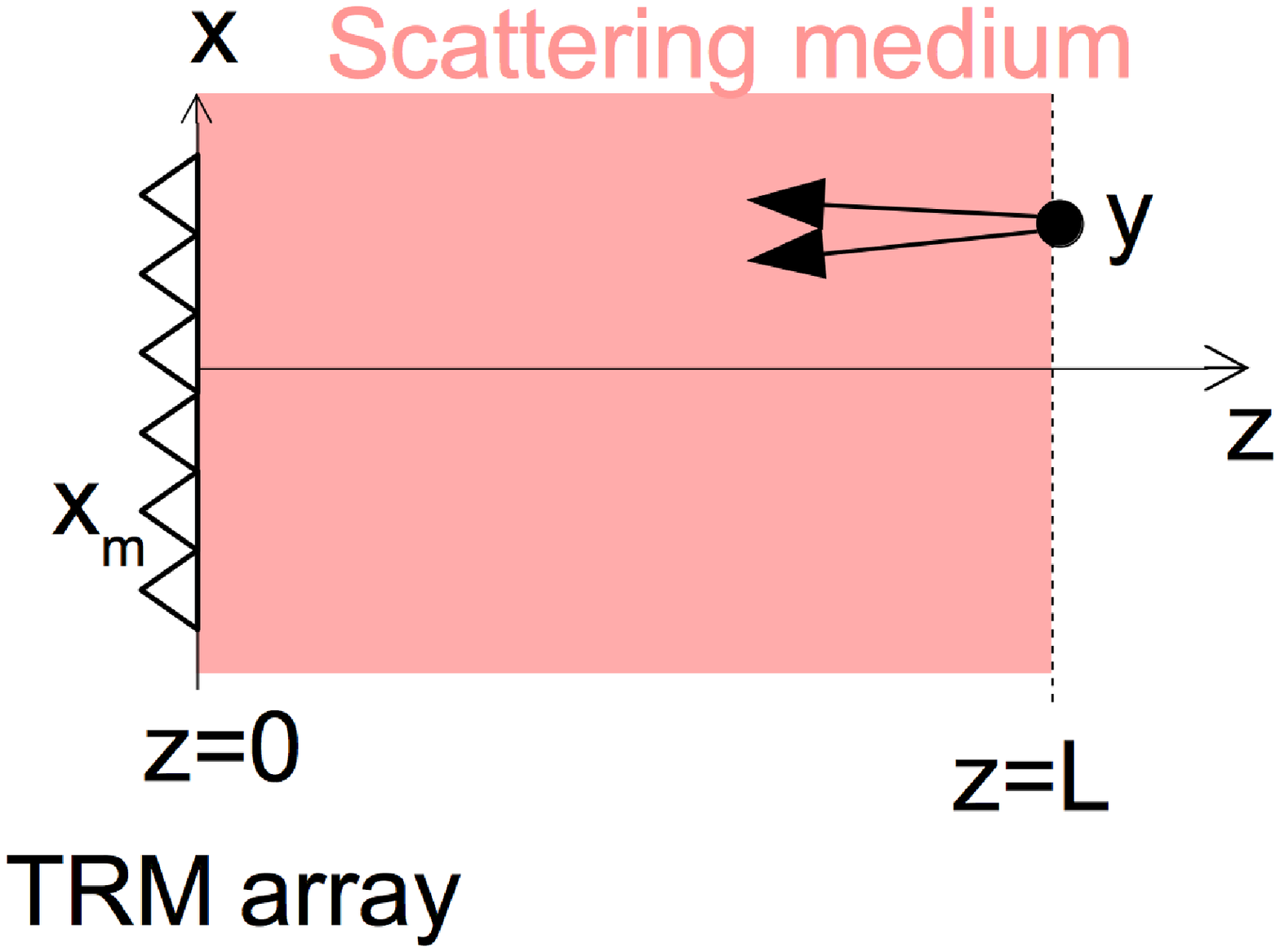}  
\hspace*{1.cm}
\includegraphics[width=4.1cm]{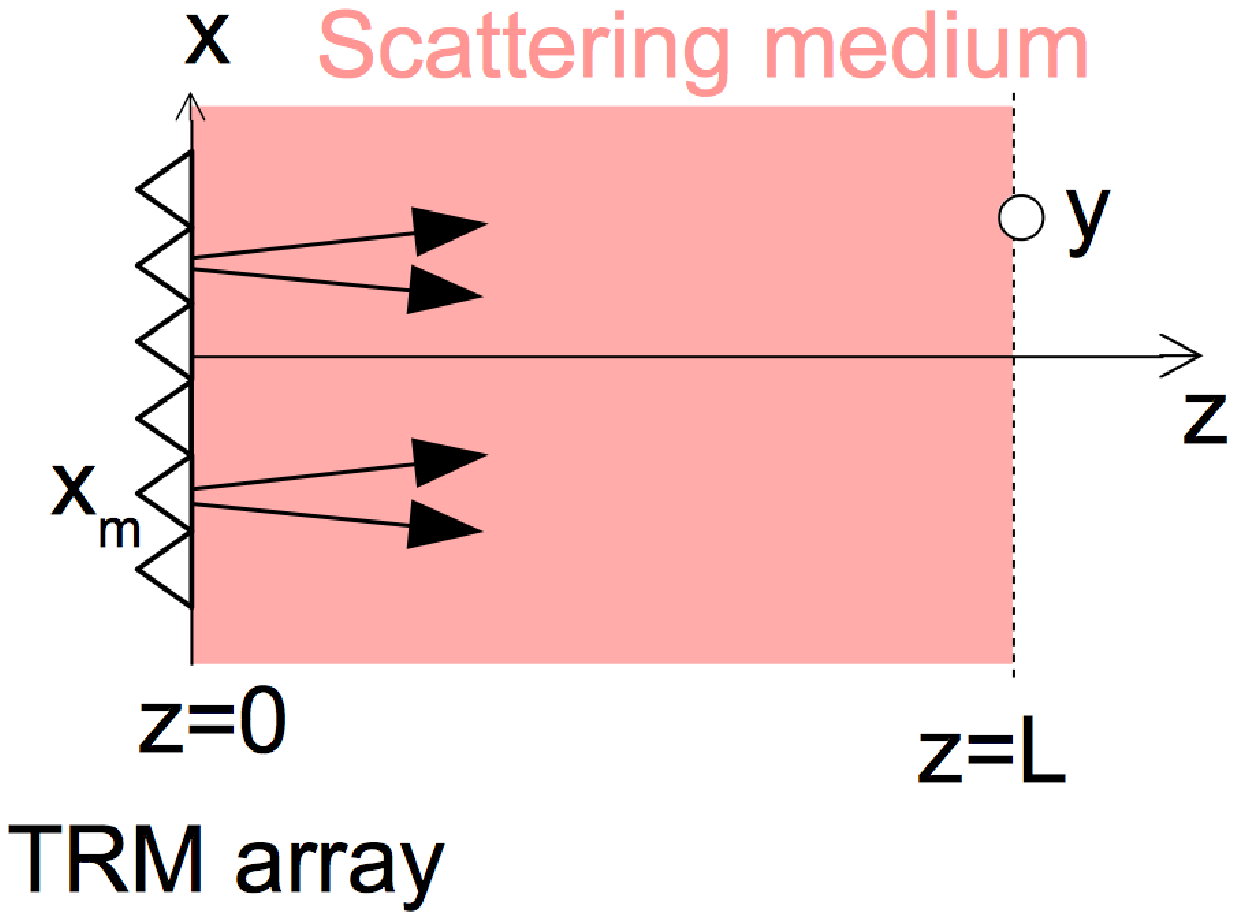}  
\end{tabular}
\end{center}
\caption{Time-reversal experiment. Left: first step of the experiment (a point source transmits from $(\by,L)$ and the TRM in the plane $z=0$  is used as an array of receivers).
Right: second step of the experiment (the TRM is used as an array of sources).}
\label{fig:setup}
\end{figure}

\subsection{Time-Harmonic Refocusing Experiment}
\label{sec:1th}%
In the first step of the time-harmonic time-reversal experiment, 
a point  source localized at $(\by,L)$ emits a time-harmonic signal at frequency $\tilde{\omega}$ (see Figure \ref{fig:setup}).
The TRM is used as an array of receivers 
and records the wave emitted by the point source. The size $\rho_0$ of the elements of the TRM is taken into account 
in the form of a Gaussian smoothing kernel with radius $\rho_0$. 
We denote  the time-harmonic Green's function 
from $(\bx_m,0)$ to $(\by,L)$  by $\hat{\cal G}( L , \by, \bx_m)$ 
(which is equal to the Green's function from $(\by,L)$
to $(\bx_m,0)$ by reciprocity), the recorded field at $(\bx_m,0)$ can then be expressed as:
%obtained  via the smoothing over  the Gaussian  mirror element as:  
\begin{equation}
\label{def:urec}
\hat{u}_{\rm rec}( \bx_m ; \by) =
\frac{1}{2 \pi \rho_0^2} \int_{\RR^2} \hat{\cal G}(\tilde{\omega},L , \by, \bx_m+\bx') \exp \Big( - \frac{|\bx'|^2}{2 \rho_0^2} \Big) d\bx'  .
\end{equation}

In the second step of the experiment, 
the TRM is used as an array of sources. It emits
the complex-conjugated (time-reversed) recorded field $\overline{\hat{u}_{\rm rec}}$ at frequency $\omega$,
which can be different from $\tilde{\omega}$.
The field observed in the plane $z=L$ at the point $(\bx,L)$ has the form
\begin{equation}
\label{eq:trfield1}
\hat{u}_{\rm tr}( \bx; \by  ) =
 \int_{\RR^2}
   \hat{u}_{\rm em} (\omega, \bx;  \bx_m) 
 \exp \Big( - \frac{|\bx_m|^2}{R_{\rm m}^2} \Big)  \overline{ \hat{u}_{\rm rec}( \bx_m ; \by) } 
  d\bx_m .
\end{equation}
Here we have assumed that the TRM has a radius $R_{\rm m}$ and
can be modeled by a Gaussian spatial cut-off function.  Moreover, 
we again take into account the size $\rho_0$ of the elements of the TRM by considering that
from any point $(\bx_m,0)$ the  TRM  can transmit from an element with radius $\rho_0$ and 
with a Gaussian form,
which generates the following field at point $(\bx,L)$:
\begin{equation}
\label{def:uem}
\hat{u}_{\rm em} (\omega, \bx;\bx_m ) = \frac{1}{2 \pi \rho_0^2} 
\int_{\RR^2} \hat{\cal G}(\omega,L,\bx,\bx_m+\bx')  \exp \Big( - \frac{|\bx'|^2}{2 \rho_0^2} \Big)d\bx'  .
\end{equation}

The time-reversed field observed in the plane $z=L$ can therefore be expressed as
\begin{eqnarray}
\nonumber
\hat{u}_{\rm tr}(\bx ;\by ) &=& \frac{4 \omega \tilde{\omega}}{c_o^2} K_0
\iint_{\RR^2 \times \RR^2} 
\exp \Big( - \frac{|\bx'|^2}{r_0^2} - \frac{|\by'|^2}{4 \rho_0^2} \Big) \\
&& \times
\hat{\cal G}\big(\omega,L,\bx, \bx'+\frac{\by'}{2}\big) 
\overline{\hat{\cal G}\big(\tilde{\omega},L,\by,\bx'-\frac{\by'}{2}\big) }
  d\bx' d\by'   ,
  \label{eq:base}
\end{eqnarray}
with   
\begin{equation}
\label{def:r0}
K_0 = \frac{c_o^2(r_0^2 -\rho_0^2)}{16 \pi  \omega \tilde{\omega} \rho_0^2 r_0^2}
,\quad \quad 
r_0^2 = R_{\rm m}^2 + \rho_0^2  .
\end{equation}  
%Of course, when the size of the elements $\rho_0$ goes to zero, we recover the standard expression
%for the refocused time-reversed field with a time-reversal mirror with a Gaussian aperture with radius $R_{\rm m}$:
%$$
%\hat{u}_{\rm tr}( \bx ;\by ) \mid_{\rho \to 0}=  
%\int 
%\exp \Big( - \frac{|\bx'|^2}{R_{\rm m}^2}  \Big) 
%\hat{\cal G}\big(L,\bx' ,\bx\big) 
%\overline{\hat{\cal G}\big(L,\bx' ,\by\big) }
%  d\bx'     .
%$$
From now on we will take $K_0=1$ as this multiplicative factor does not play any role in what follows.

The goal of the forthcoming analysis is to quantity the refocusing properties in terms of resolution and stability, and 
to make it precise for which frequency offset $\omega-\tilde{\omega}$  it is possible to observe refocusing.

{\bf Remark.}
In this paper we model the global shape of the TRM and the local shape of the elements of the TRM by soft Gaussian cut-off functions,
instead of hard cut-off functions such as ${\bf 1}_{[0,R_{\rm m}]}(|\bx_m|)$ or ${\bf 1}_{[0,\rho_0]}(|\bx'|)$,
 because this makes it possible to get simpler expressions.
This does not affect qualitatively the results.

\subsection{Broadband Time-Reversal Experiment}
\label{sec:tr}
In the first step of the broadband time-reversal experiment, 
a point  source localized at $(\by,L)$ emits a short pulse $f(t)$  (see Figure \ref{fig:setup}).
The pulse has central frequency $\omega_0$ and bandwidth $B$.
The TRM in the plane $z=0$ is used as an array of receivers 
and records the wave emitted by the point source around the expected arrival time $L/c_o$:
\begin{align}
\nonumber
{u}_{\rm rec}\big( t, \bx_m;\by \big) =&
\frac{1}{(2 \pi)^2 \rho_0^2} \int_\RR \int_{\RR^2} \hat{\cal G}(\omega,L , \by,\bx_m+\bx') \\
& \times \exp \Big( - \frac{|\bx'|^2}{2 \rho_0^2} -i \omega 
\big( \frac{L}{c_o} + t\big)   \Big)
\hat{f}(\omega)  d\bx' d\omega .
\label{eq:urect}
\end{align}

In the second step of the experiment, 
the TRM is used as an array of sources. It emits
the time-reversed recorded field. We observe the field around the original source location
$(\by,L)$ and around the expected arrival time $L/c_o$ to study the wave refocusing:
\begin{align}
\nonumber
{u}_{\rm tr} (t ,\bx ;\by ) =&\frac{1}{2 \pi} \int_\RR \int_{\RR^2}
\hat{u}_{\rm em} (\omega, \bx;\bx_m)  \exp \Big( - \frac{|\bx_m|^2}{R_{\rm m}^2}  \Big)
\overline{ \hat{u}_{\rm rec}(\omega,\bx_m; \by) } \\
& \times \exp \Big(   -i \omega 
\big( \frac{L}{c_o} + t\big)   \Big) d\bx_m d\omega ,
\label{eq:utrt0}
\end{align}
with $\hat{u}_{\rm em} (\omega, \bx;\bx_m) $ defined by (\ref{def:uem}) 
and $ \hat{u}_{\rm rec}(\omega,\bx_m; \by)$ being the Fourier transform of ${u}_{\rm rec}( t, \bx_m;\by)$
given by (\ref{eq:urect}).
We aim at characterizing the 
statistical stability of the refocused wave, in terms of the number of elements
of the TRM and in terms of bandwidth of the pulse, as well as the refocusing resolution.

We consider the case when the bandwidth $B$ of $f$ is smaller than its central frequency $\omega_0$, 
for instance, when 
 the source is a modulated Gaussian with central frequency $\omega_0$ and bandwidth $B$:
\begin{eqnarray}
\label{eq:modgaus}
\hat{f}(\omega) &=& \frac{\sqrt{2\pi}}{B } \exp \Big(- \frac{(\omega-\omega_0)^2}{2 B^2} \Big)  .
\end{eqnarray}

The time-reversed field observed in the plane $z=L$ around the expected arrival time $L/c_o$ can   be expressed as
\begin{eqnarray}
\nonumber
{u}_{\rm tr}(t,\bx ;\by ) &=& \frac{4 \omega_0^2}{2\pi c_o^2} K_0
\int_\RR \iint_{\RR^2 \times \RR^2}
\exp \Big( - \frac{|\bx'|^2}{r_0^2} - \frac{|\by'|^2}{4 \rho_0^2} -i \omega t \Big)  \overline{\hat{f}(\omega)} \\
&& \times
\hat{\cal G}\big(\omega,L,\bx,\bx'+\frac{\by'}{2}\big) 
\overline{\hat{\cal G}\big(\omega,L, \by, \bx'-\frac{\by'}{2} \big) }
  d\bx' d\by' d\omega  ,
  \label{eq:base:t}
\end{eqnarray}
with $K_0 = [c_o^2(r_0^2 -\rho_0^2)]/[16 \pi  \omega_0^2 \rho_0^2 r_0^2]$.
Corresponding to the situation above we will take $K_0=1$ below.

The goal of the forthcoming analysis is to quantity the refocusing properties in terms of resolution and stability, and 
to clarify the role of the source bandwidth as well as the parameters of the TRM.

\section{The Green's Function in the White-noise Paraxial Regime}
\label{sec:3}
In the white-noise paraxial regime the Green's function $\hat{\cal G}$ is of the form \cite{garniers4}
$$
\hat{\cal G} (\omega,L,\bx, \by\big)  = \frac{i c_o}{2\omega} e^{i \frac{\omega}{c_o} L}  \hat{G}(\omega, L,\bx,\by) ,
$$
where $\omega/c_o$ is the homogeneous wavenumber and 
 the function $\hat{G}$ is the solution of the It\^o-Schr\"odinger equation 
 introduced in (\ref{eq:model0}):
\begin{equation}
\label{eq:model}
 d \hat{G}(\omega,z,\bx,\by)   =     
          \frac{ i c_o}{2\omega} \Delta_{\bx}   \hat{G}(\omega,z,\bx,\by) dz
   +   \frac{i \omega}{2c_o}   \hat{G} (\omega,z,\bx,\by ) \circ  d{B}(z,\bx) 
  , 
\end{equation}
with the initial condition in the plane $z=0$:
$ \hat{G} (\omega,z= 0,\bx,\by )  = \delta(\bx-\by)$.

In this context, the time-reversed field (\ref{eq:base})
observed at $(\bx,L)$ when the original source is at $(\by,L)$
 in the wave refocusing experiment of Section \ref{sec:1th} is
\begin{equation}
\hat{u}_{\rm tr}( \bx ;\by ) =  
\iint_{\RR^2\times \RR^2}  \!
\exp \Big( - \frac{|\bx'|^2}{r_0^2} - \frac{|\by'|^2}{4 \rho_0^2} \Big) 
\hat{G}\big(\omega,L,\bx,\bx'+\frac{\by'}{2}\big) 
\overline{\hat{G}\big(\tilde{\omega},L,\by,\bx'-\frac{\by'}{2}\big) }
  d\bx' d\by'   .
\end{equation}
The mean time-reversed field is
\begin{equation}
{\cal M}_1(\bx;\by) = \EE \big[ \hat{u}_{\rm tr}( \bx ;\by )\big]   ,
\end{equation}
and it can be expressed as
\begin{eqnarray*}
{\cal M}_1(\bx;\by) &=&
\iint_{\RR^2 \times \RR^2}
\exp \Big( - \frac{|\bx'|^2}{r_0^2} - \frac{|\by'|^2}{4 \rho_0^2} \Big)  \\
&& \times
\EE \Big[ \hat{G}\big(\omega,L,\bx,\bx'+\frac{\by'}{2}\big) 
\overline{\hat{G}\big(\tilde{\omega},L,\by,\bx'-\frac{\by'}{2}\big) } \Big]
  d\bx' d\by'   .
\end{eqnarray*}
The covariance function of the time-reversed field is
\begin{eqnarray}
{\cal M}_2(\bx,\tilde{\bx};\by) &=& \EE \big[ \hat{u}_{\rm tr}( \bx ;\by )
\overline{\hat{u}_{\rm tr}( \tilde{\bx} ;\by )} \big] ,
\end{eqnarray}
and it can be expressed as 
%\begin{eqnarray}
%\nonumber
%{\cal M}_2(L,\bx_1,\bx_2,\by_1,\by_2) &=& \EE \big[ \hat{u}_{\rm tr}( \bx_1 ;\by_1 )
%\overline{\hat{u}_{\rm tr}( \by_2 ;\bx_2 )} \big] \\
%&=& 
% \EE \big[ \hat{u}_{\rm tr}( \bx_1 ;\by_1 )
%{\hat{u}_{\rm tr}( \bx_2 ;\by_2 )} \big]   ,
%\end{eqnarray}
%which depends on the fourth-order moment of the paraxial Green's function:
\begin{eqnarray*}
\nonumber
{\cal M}_2(\bx,\tilde{\bx};\by) &=&
\iint_{\RR^2 \times \RR^2}
\exp \Big( - \frac{|\bx_1'|^2+|\bx_2'|^2}{r_0^2} - \frac{|\by_1'|^2+|\by_2'|^2}{4 \rho_0^2} \Big) \\
&& \hspace*{-0.6in}
\times \EE \Big[ \hat{G}\big(\omega,L,\bx,\bx_1'+\frac{\by_1'}{2}\big) \hat{G}\big(\tilde{\omega},L,\by,\bx_2'+\frac{\by_2'}{2}\big) \\
&& \hspace*{-0.6in} \times
\overline{\hat{G}\big(\tilde{\omega},L,\by,\bx_1'-\frac{\by_1'}{2}\big) } \overline{\hat{G}\big(\omega,L,\tilde{\bx},\bx_2'-\frac{\by_2'}{2}\big) } \Big]
  d\bx_1' d\by_1' 
  d\bx_2' d\by_2' .
\end{eqnarray*}
These expressions show that we need to study the second- and fourth-order moments
of the random paraxial Green's functions at different frequencies.

%The second moment of the refocused wave field can be expressed as:
%\begin{equation}
%\label{eq:secondmom1}
% \EE \big[ \big| \hat{u}_{\rm tr} (\bx , \by)\big|^2  \big] = M_2( L, \br_1=\bx+\by, \br_2= {\bf 0} , \bq_1={\bf 0}, \bq_2 = \bx-\by ),
%\end{equation}
%and, more generally, 
%\begin{equation}
%\label{eq:secondmom1b}
% \EE \Big[  \hat{u}_{\rm tr} \big(\bx +\frac{\bh}{2} , \by\big)  
% \overline{ \hat{u}_{\rm tr} \big(\bx-\frac{\bh}{2} , \by\big)}  \Big] = M_2
% \big( L, \br_1=\bx+\by, \br_2= \frac{\bh}{2} , \bq_1= \frac{\bh}{2} , \bq_2 =  \bx-\by \big) .
%\end{equation}

\section{The Moments of the Green's Function}
\label{sec:4}
This section contains the detailed analysis of the second- and fourth-order moments 
that are needed to study the time-reversed field. 

% We  give first a road map for these moment results that we  discuss  in detail below.  
  
\subsection{The Second-order Moment}\label{sec:mom2}
Let us consider two frequencies $\omega, \tilde{\omega}$.
We consider the second-order moment:
\begin{eqnarray}
\nonumber
M_1(z,\bx,\by) &=&
\iint_{\RR^2 \times \RR^2}
\exp \Big( - \frac{|\bx'|^2}{r_0^2} - \frac{|\by'|^2}{4 \rho_0^2} \Big)  \\
&&\times
\EE \Big[ \hat{G}\big(\omega,z,\bx,\bx'+\frac{\by'}{2}\big) 
\overline{\hat{G}\big(\tilde{\omega},z,\by,\bx'-\frac{\by'}{2}\big) } \Big]
  d\bx' d\by'   .
  \label{def:mom1}
\end{eqnarray}
$M_1$ satisfies the system:
\begin{equation}
\label{eq:mom11}
\frac{\partial M_{1} }{\partial z} = \frac{i c_o}{2}  \big(  \frac{1}{\omega} \Delta_{\bx}
- \frac{1}{\tilde{\omega}} \Delta_{\by} \big) M_{1}  + \frac{1}{8 c_o^2} \big( 2 \omega \tilde{\omega}C(\bx-\by)- (\omega^2+\tilde{\omega}^2)C({\bf 0})  \big) M_{1}    ,
\end{equation}
starting from 
$$
 M_{1} (z=0,\bx,\by) = \exp \Big( - \frac{|\bx+\by|^2}{4 r_0^2} - \frac{|\bx-\by|^2}{ 4 \rho_0^2} \Big)  .
$$
 
\subsection{The Fourt-order Moment}\label{sec:mom4}
Let us consider four frequencies $\omega_1,\omega_2,\tilde{\omega}_1$, $\tilde{\omega}_2$.
We consider the fourth-order moment
\begin{eqnarray}
\nonumber
 M_2(z,\bx_1,\bx_2,\by_1,\by_2) &=&
\iint_{\RR^2 \times \RR^2 \times \RR^2 \times \RR^2}
\exp \Big( - \frac{|\bx_1'|^2+|\bx_2'|^2}{r_0^2} - \frac{|\by_1'|^2+|\by_2'|^2}{4 \rho_0^2} \Big) \\
\nonumber
&& \hspace*{-0.6in}
\times \EE \Big[ \hat{G}\big(\omega_1,z,\bx_1,\bx_1'+\frac{\by_1'}{2}\big) \hat{G}\big(\omega_2,z,\bx_2,\bx_2'+\frac{\by_2'}{2}\big) \\
&&\hspace*{-0.6in} \times
\overline{\hat{G}\big(\tilde{\omega}_1,z,\by_1,\bx_1'-\frac{\by_1'}{2}\big) } \overline{\hat{G}\big(\tilde{\omega}_2 ,z,\by_2,\bx_2'-\frac{\by_2'}{2}\big) } \Big]
  d\bx_1' d\by_1' 
  d\bx_2' d\by_2' .
\label{def:mom2}
\end{eqnarray}
It satisfies
\begin{eqnarray}
\nonumber
\frac{\partial M_2}{\partial z} &=& \frac{i c_o}{2}  \Big( \frac{1}{\omega_1} \Delta_{\bx_1}
+ \frac{1}{\omega_2}  \Delta_{\bx_2}
- \frac{1}{\tilde{\omega}_1}   \Delta_{\by_1}- \frac{1}{\tilde{\omega}_2} \Delta_{\by_2} \Big) 
M_{2} \\
&& + \frac{1}{4c_o^2} U_{2} \big( \bx_1,\bx_2,\by_1,\by_2 \big)
 M_{2}  ,
 \label{def:generalmoment}
\end{eqnarray}
with the generalized potential
\begin{eqnarray}
\nonumber
 U_{2}\big(  \bx_1,\bx_2,\by_1,\by_2 \big)  &=&
\omega_1 \tilde{\omega}_1 {C}(\bx_1-\by_1) +
\omega_1 \tilde{\omega}_2 {C}(\bx_1-\by_2) +
\omega_2 \tilde{\omega}_1 {C}(\bx_2-\by_1) \\
\nonumber
&&+ 
\omega_2 \tilde{\omega}_2{C}(\bx_2-\by_2) -  \omega_1 \omega_2 {C}( \bx_1-\bx_2)
-\tilde{\omega}_1 \tilde{\omega}_2 {C}( \by_1-\by_2) \\
&& -
\frac{\omega_1^2+\omega_2^2 +\tilde{\omega}_1^2 +\tilde{\omega}_2^2}{2}
{C}({\bf 0}) \, ,
\end{eqnarray}
and it starts from
\begin{eqnarray*}
 M_{2} (z=0,  \bx_1,\bx_2,\by_1,\by_2) &=&
 \exp \Big( - \frac{|\bx_1+\by_1|^2+|\bx_2+\by_2|^2}{4 r_0^2} \Big)\\
 && \times \exp \Big(- \frac{|\bx_1-\by_1|^2+|\bx_2-\by_2|^2}{ 4 \rho_0^2} \Big) .
\end{eqnarray*}

\subsection{The Scintillation Regime}
\label{sec:regime}%
In this paper we  address a regime which can be considered
as a particular case of the paraxial white-noise regime:  the scintillation regime. 
The scintillation regime is valid if  the correlation length of the medium 
(i.e., the transverse correlation length of the Brownian field $B$)
is smaller than the radius of the TRM and the size of the TRM elements.
If the correlation length is our reference length, this means that 
in this regime the covariance function $C^\eps$,
the radius of the TRM $r_0^\eps$, the TRM element size $\rho_0^\eps$,
and the propagation distance $L^\eps$
are of the form
\begin{equation}
\label{sca:sci}
C^\eps(\bx)= \eps C (\bx) , \quad \quad r_0^\eps = \frac{r_0}{\eps}, \quad \quad \rho_0^\eps = \frac{\rho_0}{\eps},
\quad \quad L^\eps = \frac{L}{\eps} .
\end{equation}
Here $\eps$ is a small dimensionless parameter and we will study the limit $\eps \to 0$.

Note that in Subsection \ref{sec:regime:2} we will address a slightly different version of the scintillation regime, which is (\ref{sca:sci}) in which the size of the TRM elements
is of the same order as the correlation length of the medium: $\rho_0^\eps=\rho_0$.

\subsection{The Second-order Moment in the Scintillation Regime}
Let us consider the second-order moment (\ref{def:mom1}) in the scintillation regime (\ref{sca:sci}).
We assume that the two frequencies are close to each other and
we parameterize them as
\begin{eqnarray*}
\omega = \omega_0+\eps \Omega  ,
\quad \quad 
\tilde{\omega} =\omega_0- \eps \Omega .
\end{eqnarray*}
We parameterize the two points $\bx$ and $\by$ as
 $$
 \br= \frac{\bx+\by}{2} , \quad \quad \bq = \bx-\by, 
 $$
 We consider a long propagation distance of the form $z/\eps$.

In the variables $(z/\eps,\bq,\br)$ 
the function $M_{1}^\eps$ satisfies the equation:
\begin{eqnarray}
\nonumber
\frac{\partial M_{1}^\eps}{\partial z} &=& 
\frac{i c_o}{\omega_0 \eps}   \nabla_{\br}\cdot \nabla_{\bq}
 M_{1}^\eps 
 -\frac{i c_o \Omega}{\omega_0^2}  \big( \frac{1}{4} \Delta_\br +\Delta_\bq\big)
 M_{1}^\eps
+ \frac{\omega_0^2}{4 c_o^2} \big( {C}( \bq) -  {C}({\bf 0})  \big) M_{1}^\eps   ,
\label{eq:M10}
\end{eqnarray}
%with the generalized potential
%\begin{eqnarray}
%\nonumber
%{\cal U}_{1}(\bq) &=& 
% {C}( \bq) -  {C}({\bf 0})  ,
%\end{eqnarray}
starting from
$$
M_{1}^\eps(z=0,\bq,\br) = \exp \Big( - \eps^2 \frac{  |\br|^2}{r_0^2}  
- \eps^2 \frac{ |\bq|^2}{4\rho_0^2} \Big) ,
$$
and where we have not written terms of order $\eps$.
The Fourier transform (in $\bq$ and $\br$) of the second-order moment
of the paraxial Green's function 
is defined by:
\begin{eqnarray}
\hat{M}_{1}^\eps\big( \frac{z}{\eps} ,\bxi,\bzeta \big) 
&=& 
\iint_{\RR^2\times \RR^2} M_{1}^\eps\big( \frac{z}{\eps} ,\bq,\br\big) 
\exp  \big(- i\bq \cdot \bxi- i\br\cdot \bzeta \big) d\br d\bq 
. 
\end{eqnarray}
It satisfies
\begin{eqnarray*}
\nonumber
 \frac{\partial \hat{M}^\eps_1}{\partial z} 
&=&
- \frac{i c_o}{\omega_0 \eps} \bxi \cdot \bzeta  \hat{M}^\eps_1+
\frac{ic_o \Omega}{\omega_0^2}
 \big( \frac{1}{4} |\bzeta|^2+ |\bxi|^2 
\big)  \hat{M}^\eps_1 \\
&&+\frac{\omega_0^2}{4 (2\pi)^2 c_o^2} 
\int_{\RR^2} \hat{C}(\bk) \Big[  - \hat{M}^\eps_1(  \bxi ,\bzeta )  +
\hat{M}^\eps_1 (  \bxi-\bk , \bzeta)    \Big] d \bk .
\end{eqnarray*}
Let us absorb the rapid phase in the function
\begin{equation}
\label{eq:renormhatM1}
\widetilde{M}^\eps_1 \big( \frac{z}{\eps} ,\bxi,\bzeta \big) = 
\hat{M}_1^\eps \big( \frac{z}{\eps} ,\bxi ,\bzeta\big)
 \exp \Big( \frac{i c_o z}{\omega_0 \eps} \bxi \cdot \bzeta  \Big)
 .
\end{equation}
In the scintillation regime the rescaled function $\widetilde{M}^\eps_1$  satisfies the equation with fast phases
\begin{eqnarray}
\nonumber
 \frac{\partial \widetilde{M}^\eps_1}{\partial z} 
&=&
\frac{ic_o \Omega}{\omega_0^2}
 \big( \frac{1}{4} |\bzeta|^2+ |\bxi|^2 
\big)  \widetilde{M}^\eps_1 \\
&&+\frac{\omega_0^2}{4 (2\pi)^2 c_o^2} 
\int_{\RR^2} \hat{C}(\bk) \Big[  -  \widetilde{M}^\eps_1(  \bxi ,\bzeta )  +
\widetilde{M}^\eps_1 (  \bxi-\bk , \bzeta) 
e^{i\frac{c_o z}{\eps \omega_0} \bk \cdot \bzeta}  \Big] d \bk ,
\label{eq:tildeN1eps}
\end{eqnarray}
starting from 
\begin{equation}
\label{eq:initialtildeM1eps}
\widetilde{M}^\eps_1(z=0,\bxi ,  \bzeta) = (2\pi)^4 \phi^\eps_{\sqrt{2} \rho_0} ( \bxi )
\phi^\eps_{r_0/\sqrt{2} } ( \bzeta  ) ,
\end{equation}
where we have denoted
\begin{equation}
\label{def:phiepsrho}
\phi^\eps_{\rho}(\bxi) = \frac{\rho^2}{2\pi \eps^2} \exp \Big( -\frac{\rho^2}{2 \eps^2} |\bxi|^2\Big) .
\end{equation}
Note that $\phi^\eps_{\rho}$ belongs to $L^1$ and has a $L^1$-norm equal to one,
and that it behaves like a Dirac distribution as $\eps \to 0$.

\begin{proposition}
\label{prop:1}%
The function $\widetilde{M}^\eps_1(z/\eps,\bxi , \bzeta  ) $ defined by (\ref{eq:renormhatM1}) can be expanded as
\begin{eqnarray}
\nonumber
 \widetilde{M}^\eps_1\big( \frac{z}{\eps} ,\bxi , \bzeta \big)  &=&
K(z)
\phi^\eps_{\sqrt{2} \rho_0} ( \bxi )
\phi^\eps_{r_0/\sqrt{2} } ( \bzeta  ) \\
&&+
\phi^\eps_{r_0/\sqrt{2} } ( \bzeta  ) 
A\big(z, \bxi  ,\frac{\bzeta}{\eps},\Omega\big) 
 + R^\eps_1 (z ,\bxi  ,  \bzeta )   ,
 \label{def:R1eps}
\end{eqnarray}
where  the function $K$ is defined by
\begin{equation}
\label{def:K}
K(z) = (2\pi)^4 \exp\Big(- \frac{\omega_0^2}{4 c_o^2} C({\bf 0}) z\Big) , 
\end{equation}
the function $(z,\bxi)\mapsto A(z,\bxi,\bzeta,\Omega)$ is the solution of 
\begin{eqnarray}
\nonumber
\partial_z A &=&  \frac{i c_o\Omega}{\omega_0^2} |\bxi|^2 A +\frac{\omega_0^2}{4(2\pi)^2c_o^2}
\int_{\RR^2} \hat{C}(\bk) \big[ A(\bxi-\bk) e^{\frac{i c_oz}{\omega_0} \bk \cdot \bzeta} -A(\bxi)\big] d\bk\\
&&
+\frac{\omega_0^2}{4  (2\pi)^2 c_o^2} K(z) \hat{C}(\bxi) e^{\frac{i c_oz}{\omega_0} \bxi \cdot \bzeta} ,
\label{def:A}
\end{eqnarray}
starting from $A(z=0,\bxi,\bzeta,\Omega)=0$,
and the function $R^\eps_1 $ satisfies
\begin{equation}
\sup_{z \in [0,Z]} \| R^\eps_1 (z,\cdot,\cdot) \|_{L^1(\RR^2\times \RR^2 )} 
\stackrel{\eps \to 0}{\longrightarrow}  0  ,
\end{equation}
for any $Z>0$.
\end{proposition}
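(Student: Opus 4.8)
The plan is to analyze the integro-differential equation (\ref{eq:tildeN1eps}) by treating the $\bzeta$ variable as a (fast-rescaled) parameter and studying the evolution in $(z,\bxi)$. The ansatz (\ref{def:R1eps}) is essentially a decomposition of $\widetilde M^\eps_1$ into the coherent part, which keeps the shape of the initial data and decays at the Lyapunov-type rate $\omega_0^2 C({\bf 0})/(4c_o^2)$, plus an incoherent part $A$ that is generated by scattering off the initial coherent profile, plus a negligible remainder. First I would substitute $\bzeta \mapsto \bzeta/\eps$ in the fast phase of (\ref{eq:tildeN1eps}) so that the exponent reads $e^{i(c_o z/\omega_0)\bk\cdot(\bzeta/\eps)}$ after the rescaling $\bxi$ stays $O(1)$ and the $\bzeta$-argument scales like $1/\eps$; this is why the term $A(z,\bxi,\bzeta/\eps,\Omega)$ carries $\bzeta/\eps$ in (\ref{def:R1eps}). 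Plugging the ansatz in, the coefficient of $\phi^\eps_{\sqrt2\rho_0}(\bxi)\phi^\eps_{r_0/\sqrt2}(\bzeta)$ on the left must match $K'(z)$ on the right; the only contribution there comes from the $-\widetilde M^\eps_1$ loss term evaluated against $\hat C(\bk)$, whose total mass is $(2\pi)^2 C({\bf 0})$ (by (\ref{def:C2}) and Fourier inversion), giving exactly $K'(z)=-(\omega_0^2/(4c_o^2))C({\bf 0})K(z)$ together with the $\Omega$-term which vanishes against the Dirac-like $\phi^\eps$ at $\bxi=0$ up to $o(1)$. The equation for $A$ in (\ref{def:A}) is then read off: its source is the gain term $\hat C(\bxi)e^{i(c_oz/\omega_0)\bxi\cdot\bzeta}$ acting on the coherent part $K(z)\phi^\eps_{\sqrt2\rho_0}$, which in the limit $\eps\to0$ concentrates the convolution variable at the origin and produces $(\omega_0^2/(4(2\pi)^2c_o^2))K(z)\hat C(\bxi)e^{i(c_oz/\omega_0)\bxi\cdot\bzeta}$, while the homogeneous part of the $A$-equation reproduces the same loss/gain operator plus the $\Omega|\bxi|^2$ drift.

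Having motivated the ansatz, the rigorous step is to define $R^\eps_1$ by (\ref{def:R1eps}) and show it solves a closed equation of the form $\partial_z R^\eps_1 = \mathcal L^\eps_z R^\eps_1 + S^\eps(z,\bxi,\bzeta)$, where $\mathcal L^\eps_z$ is the same integral operator $f\mapsto (\omega_0^2/(4(2\pi)^2c_o^2))\int \hat C(\bk)[f(\bxi-\bk)e^{i(c_oz/(\eps\omega_0))\bk\cdot\bzeta}-f(\bxi)]d\bk + (ic_o\Omega/\omega_0^2)|\bxi|^2 f$ appearing in (\ref{eq:tildeN1eps}), and the source $S^\eps$ collects all the mismatch terms. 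The operator $\mathcal L^\eps_z$ generates a contraction semigroup on $L^1(\RR^2\times\RR^2)$ in the $\bxi$-variable for each fixed $z,\bzeta$: the loss term $-\|\hat C\|$ exactly cancels the $L^1$-norm gain from the convolution (since $|e^{i\theta}|=1$), and the $\Omega|\bxi|^2$ drift is a pure phase. Hence by the variation-of-constants formula and Gronwall, $\sup_{z\le Z}\|R^\eps_1(z)\|_{L^1}\le C_Z\sup_{z\le Z}\int_0^z\|S^\eps(s)\|_{L^1}\,ds$, so it suffices to prove $\|S^\eps\|_{L^1}\to0$.

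The source $S^\eps$ consists of two kinds of terms. The first kind comes from replacing $\phi^\eps_{\sqrt2\rho_0}(\bxi)$ (a Dirac-like mass in $\bxi$) inside the various integrals by $\delta(\bxi)$: differences such as $\int\hat C(\bk)[\phi^\eps_{\sqrt2\rho_0}(\bxi-\bk)-\delta(\bxi-\bk)]\cdots d\bk$ tend to zero in $L^1(d\bxi)$ because $\phi^\eps_{\sqrt2\rho_0}\to\delta$ weakly and $\hat C$ is, say, bounded and integrable (which follows from $C$ being the integrated correlation of a mixing field with integrable correlations, hence $C\in L^1$ and smooth enough). The second kind comes from the $\Omega$-term acting on the coherent piece, $\propto(|\bzeta|^2/4+|\bxi|^2)\phi^\eps_{\sqrt2\rho_0}(\bxi)\phi^\eps_{r_0/\sqrt2}(\bzeta)$: against the Dirac-like factors this forces $\bxi,\bzeta\to0$, so it converges to zero in $L^1$ as well (one needs the elementary fact that $|\bxi|^2\phi^\eps_\rho(\bxi)\to0$ in $L^1$, which is immediate by scaling). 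The main obstacle is bookkeeping: one must verify that after the $\bzeta\mapsto\bzeta/\eps$ rescaling the fast phase $e^{i(c_oz/(\eps\omega_0))\bk\cdot\bzeta}$ appears \emph{consistently} in both the $\widetilde M^\eps_1$-equation and the $A$-equation so that these oscillations cancel in $S^\eps$ rather than needing to be averaged out — i.e. the oscillations are carried along identically by the ansatz and never actually need a stationary-phase argument, only the Dirac-concentration limits do. Ensuring the remainder equation is genuinely closed (no leftover $\widetilde M^\eps_1$ on the right-hand side that is not already $K\phi\phi+\phi A+R^\eps_1$) is the one place where care is required; everything else is the contraction estimate plus the two elementary limits above.
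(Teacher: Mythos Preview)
Your approach is essentially the paper's: substitute the ansatz, derive $\partial_z R^\eps_1 = \mathcal L^\eps_z R^\eps_1 + S^\eps$ with the source $S^\eps$ collecting exactly the Dirac-concentration mismatch $\int\hat C(\bxi-\bk)\phi^\eps_{\sqrt2\rho_0}(\bk)e^{i\cdots}d\bk-\hat C(\bxi)$ and the $\Omega$-phase acting on the narrow Gaussians, show $\|S^\eps\|_{L^1}\to 0$ by the rescalings $\bzeta\mapsto\eps\bzeta$, $\bk\mapsto\eps\bk$ (so no stationary-phase is needed, as you correctly note), and conclude by Gronwall. The only cosmetic difference is that the paper first factors out the phase $\exp\big(i\frac{c_o\Omega}{\omega_0^2}(\tfrac14|\bzeta|^2+|\bxi|^2)z\big)$ so that the resulting operator $\check{\mathcal L}^\eps$ is genuinely bounded on $L^1$ (norm $\le \omega_0^2 C({\bf 0})/(2c_o^2)$) rather than containing the unbounded multiplier $i|\bxi|^2$, and then reinstates this phase at the very end, where your ``second kind'' terms reappear verbatim and are dispatched by the same $|\bxi|^2\phi^\eps_\rho\to 0$ scaling.
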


\noindent
\begin{proof}
We introduce
\begin{eqnarray*}
\check{M}_1^\eps(z,\bxi,\bzeta) &=&
\widetilde{M}^\eps_1\big( \frac{z}{\eps} ,\bxi , \bzeta \big) 
\exp\Big( -i \frac{c_o\Omega}{\omega_0^2}\big(\frac{1}{4} |\bzeta|^2 + |\bxi|^2\big) z \Big) ,
\\
\check{A}^\eps(z,\bxi,\bzeta)&=&
A\big(z,\bxi,\frac{\bzeta}{\eps},\Omega) \exp\Big( -i \frac{c_o\Omega}{\omega_0^2}|\bxi|^2 z \Big) .
\end{eqnarray*}
We first note that, for any $\bzeta$, we have using Bochner's theorem
$$
\partial_z \| \check{A}^\eps(z,\cdot,\bzeta )\|_{L^1} \leq \frac{\omega_0^2}{2c_o^2} C({\bf 0}) \|\check{A^\eps}(z,\cdot,\bzeta )\|_{L^1}
+ \frac{\omega_0^2}{4 c_o^2} K(z) C({\bf 0}) ,
$$
which shows by Gronwall's lemma that
$$
\sup_{z\in [0,Z], \bzeta \in \RR^2} \partial_z \|\check{A^\eps}(z,\cdot,\bzeta)\|_{L^1} <  \infty.
$$
If we define the operator $\check{\cal L}^\eps$ from $L^1(\RR^2\times \RR^2)$ to $L^1(\RR^2\times \RR^2)$
$$
[ \check{\cal L}^\eps \check{M} ](\bxi,\bzeta) = 
\frac{\omega_0^2}{4(2\pi)^2 c_o^2}
\int_{\RR^2} \hat{C}(\bk)
\big[ 
\check{M}(\bxi-\bk,\bzeta) e^{ i \frac{c_o\Omega}{\omega_0^2} (|\bk|^2-2\bxi\cdot\bk) z + i \frac{c_o}{\eps \omega_0} \bk \cdot \bzeta z}
-
\check{M}(\bxi,\bzeta)\big]d\bk,
$$
whose norm is bounded by $\| \check{\cal L}^\eps\|_{L^1\to L^1} \leq \frac{\omega_0^2}{2c_o^2} C({\bf 0})$,
then we get from (\ref{eq:tildeN1eps}) that
$\check{M}_1^\eps$ satisfies the equation
$$
\partial_z \check{M}_1^\eps =\check{\cal L} ^\eps \check{M}_1^\eps .
$$
Denoting
\begin{eqnarray*}
\check{R}^\eps(z,\bxi,\bzeta) &=&
\check{M}_1^\eps(z,\bxi,\bzeta)  -
\check{N}^\eps(z,\bxi,\bzeta) , \\
\check{N}^\eps(z,\bxi,\bzeta) &=&
K(z)
\phi^\eps_{\sqrt{2} \rho_0} ( \bxi )
\phi^\eps_{r_0/\sqrt{2} } ( \bzeta  ) +
\phi^\eps_{r_0/\sqrt{2} } ( \bzeta  ) 
\check{A}^\eps (z, \bxi  ,\bzeta) ,
\end{eqnarray*}
we have
\begin{equation}
\label{eq:Repsproof}
\partial_z \check{R}^\eps = \check{\cal L}^\eps \check{R}^\eps +\check{S}^\eps  ,
\end{equation}
with
$$
\check{S}^\eps(z,\bxi,\bzeta)  = - \partial_z \check{N}^\eps(z,\bxi,\bzeta)  + \check{\cal L}^\eps \check{N}^\eps(z,\bxi,\bzeta) .
$$
The function $\check{S}^\eps$ is equal to
\begin{eqnarray*}
\check{S}^\eps(z,\bxi,\bzeta)  &=&
\frac{\omega_0^2}{4 (2\pi)^2 c_o^2} K(z) \phi^\eps_{r_0/\sqrt{2}}(\bzeta) 
e^{i \frac{c_o}{\omega_0 \eps} \bxi\cdot\bzeta z - i \frac{c_o \Omega}{\omega_0^2} |\bxi|^2 z} \\
&&\times
\Big[ 
\int_{\RR^2} \hat{C}(\bxi-\bk) \phi^\eps_{\sqrt{2} \rho_0} ( \bk )
e^{-i \frac{c_o}{\omega_0 \eps} \bk\cdot \bzeta z + i \frac{c_o \Omega}{\omega_0^2} |\bk|^2 z}
d\bk
- \hat{C}(\bxi)\Big] .
\end{eqnarray*}
Its $L^1$-norm can be evaluated as follows for  $ z\in [0, Z]$:
\begin{eqnarray*}
&&\| \check{S}^\eps(z,\cdot,\cdot)\|_{L^1}\\
&&=\frac{\omega_0^2K(z)}{4 (2\pi)^2 c_o^2} 
\iint d\bzeta d\bxi \phi^1_{r_0/\sqrt{2}} (\bzeta) 
 \Big| \int \hat{C}(\bxi-\eps \bk) \phi^1_{\sqrt{2} \rho_0} ( \bk )
e^{-i \eps \frac{c_o}{\omega_0} \bk\cdot \bzeta z + i \eps^2 \frac{c_o \Omega}{\omega_0^2} |\bk|^2 z}
d\bk
- \hat{C}(\bxi)\Big| \\
&&\leq \frac{\omega_0^2K(z)}{4 (2\pi)^2 c_o^2} \iiint d\bzeta d\bxi d\bk \phi^1_{r_0/\sqrt{2}} (\bzeta)  \phi^1_{\sqrt{2} \rho_0} ( \bk )
 \Big| \hat{C}(\bxi-\eps \bk)  
e^{-i \eps \frac{c_o}{\omega_0} \bk\cdot \bzeta z + i \eps^2 \frac{c_o \Omega}{\omega_0^2} |\bk|^2 z}
- \hat{C}(\bxi)\Big| \\
&&\leq\frac{\omega_0^2K(z)}{4 (2\pi)^2 c_o^2} 
\iint  d\bzeta  d\bk  \phi^1_{r_0/\sqrt{2}} (\bzeta) \phi^1_{\sqrt{2} \rho_0} ( \bk )
\Big[ \int \big| \hat{C}(\bxi-\eps \bk)  -\hat{C}(\bxi)\big| d\bxi\Big] \\
&&\quad + 
\frac{\omega_0^2 K(z)}{4 (2\pi)^2 c_o^2} 
\iiint d\bzeta d\bxi d\bk \phi^1_{r_0/\sqrt{2}} (\bzeta)  \phi^1_{\sqrt{2} \rho_0} ( \bk )
\hat{C}(\bxi) \big| e^{-i \eps \frac{c_o}{\omega_0} \bk\cdot \bzeta z + i \eps^2 \frac{c_o \Omega}{\omega_0^2} |\bk|^2 z} -1\big| \\
&&\leq\frac{\omega_0^2  (2\pi)^2}{4  c_o^2} 
\int   d\bk   \phi^1_{\sqrt{2} \rho_0} ( \bk )
\Big[ \int \big| \hat{C}(\bxi-\eps \bk)  -\hat{C}(\bxi)\big| d\bxi\Big] \\
&&\quad + 
\frac{(2\pi)^4 \omega_0^2 C({\bf 0})Z}{4  c_o^2} 
\iint d\bzeta  d\bk \phi^1_{r_0/\sqrt{2}} (\bzeta)  \phi^1_{\sqrt{2} \rho_0} ( \bk )
\big( \eps \frac{c_o}{\omega_0} |\bk| |\bzeta| +  \eps^2 \frac{c_o \Omega}{\omega_0^2} |\bk|^2  \big).
\end{eqnarray*}
The term within  the square brackets is bounded by $2(2\pi)^2 C({\bf 0})$ 
and goes to zero as $\eps \to 0$ for any $\bk$ (because $\hat{C}\in L^1$ and $\hat{C}$ is continuous,
as it is the inverse Fourier transform of an $L^1$-function), 
so the first term of the right-hand side goes to zero as $\eps \to 0$ by Lebesgue's dominated convergence theorem.
The second term of the right-hand side is of order $\eps$ and it goes to zero as $\eps \to 0$.
%The term between the absolute values  is bounded by $2$ and goes to zero as $\eps \to 0$
%for any $\bk,\bxi,\bzeta$, 
%so the second term of the right-hand side goes to zero as $\eps \to 0$ by Lebesgue's dominated convergence theorem.
As a result,
$$
\sup_{z\in [0,Z]} \| \check{S}^\eps(z,\cdot,\cdot)\|_{L^1} \stackrel{\eps \to 0}{\longrightarrow} 0 .
$$
Integrating (\ref{eq:Repsproof}) and taking the $L^1$-norm, we find that for any $z\in [0,Z]$:
$$
\|  \check{R}^\eps(z,\cdot,\cdot)\|_{L^1}  \leq  \frac{\omega_0^2}{2c_o^2} C({\bf 0})
\int_0^z \|  \check{R}^\eps(z',\cdot,\cdot)\|_{L^1} dz' + 
\int_0^z \| \check{S}^\eps(z',\cdot,\cdot)\|_{L^1} dz'   .
$$
Applying Gronwall's lemma gives:
\begin{equation}
\label{eq:checkRepsproof}
\sup_{z\in [0,Z]} \|  \check{R}^\eps(z,\cdot,\cdot)\|_{L^1} \stackrel{\eps \to 0}{\longrightarrow} 0 .
\end{equation}
Finally, the residual  $R_1^\eps$ defined by (\ref{def:R1eps}) can be expressed as
\begin{eqnarray*}
R_1^\eps (z,\bxi,\bzeta)&=& 
\check{R}^\eps(z,\bxi,\bzeta)
\exp\Big( i \frac{c_o\Omega}{\omega_0^2}\big(\frac{1}{4} |\bzeta|^2 + |\bxi|^2\big) z \Big) \\
&&+K(z)
\phi^\eps_{\sqrt{2} \rho_0} ( \bxi )
\phi^\eps_{r_0/\sqrt{2} } ( \bzeta  )
\Big[ \exp\Big( i \frac{c_o\Omega}{\omega_0^2}\big(\frac{1}{4} |\bzeta|^2 + |\bxi|^2\big) z \Big)-1\Big] \\
&&+\phi^\eps_{r_0/\sqrt{2} } ( \bzeta  ) 
A\big(z, \bxi  ,\frac{\bzeta}{\eps},\Omega\big) 
\Big[ \exp\Big( i \frac{c_o\Omega}{\omega_0^2} \frac{1}{4} |\bzeta|^2  z \Big)-1\Big] .
\end{eqnarray*}
The $L^1$-norm in $(\bxi,\bzeta)$ of the first term of the right-hand side goes to zero by (\ref{eq:checkRepsproof}).
The $L^1$-norm of the second term is
$$
\iint d\bxi d\bzeta K(z)
\phi^1_{\sqrt{2} \rho_0} ( \bxi )
\phi^1_{r_0/\sqrt{2} } ( \bzeta  )
\Big| \exp\Big( i \frac{c_o\Omega \eps^2}{\omega_0^2}\big(\frac{1}{4} |\bzeta|^2 + |\bxi|^2\big) z \Big)-1\Big| 
$$
which is bounded by 
$$
\frac{ (2\pi)^4 c_o|\Omega| \eps^2}{\omega_0^2} Z \iint d\bxi d\bzeta 
\phi^1_{\sqrt{2} \rho_0} ( \bxi )
\phi^1_{r_0/\sqrt{2} } ( \bzeta  )
\big(\frac{1}{4} |\bzeta|^2 + |\bxi|^2\big) 
$$
which goes to zero as $\eps \to 0$.
The $L^1$-norm  of the third term is
$$
\iint d\bxi d\bzeta 
\phi^1_{r_0/\sqrt{2} } ( \bzeta  )
|A(z,\bxi,\bzeta)| \Big| \exp\Big( i \frac{c_o\Omega \eps^2}{\omega_0^2} \frac{1}{4} |\bzeta|^2  z \Big)-1\Big| 
$$
which is bounded by 
$$
\frac{c_o|\Omega| \eps^2}{4 \omega_0^2} Z \iint d\bxi d\bzeta 
\phi^1_{r_0/\sqrt{2} } ( \bzeta  )
|A(z,\bxi,\bzeta)|
  |\bzeta|^2 ,
$$
which goes to zero as $\eps \to 0$  because $\sup_{z\in [0,Z],\bzeta \in \RR^2} \| A(z,\cdot,\bzeta)\|_{L^1}$ is bounded.
This completes the proof of the proposition.
\end{proof}

We remark that $A$ defined by (\ref{def:A}) describes how energy is transferred from 
the coherent part to the incoherent part of the wave field and also  in between different lateral slowness modes. 
The first term in the right-hand side of (\ref{def:A}) captures the decorrelation due to frequency separation,   the second term captures 
 random forward scattering and transfer of incoherent energy  between different lateral slowness modes, and the third term
captures transfer of energy from the coherent part to the scattered  part of the wave field. 
% \begin{eqnarray} 
% \nonumber
% M_1 \big( L,\br+ \frac{\bq}{2},\br- \frac{\bq}{2}\big) =
% \frac{r_0^2}{4 \pi} 
% \int \exp \Big( i \bxi \cdot \br
% - \frac{r_0^2 |\bxi|^2}{4} -\frac{ |\bq - \bxi \frac{c_o L}{\omega_0} |^2}{4 \rho_0^2} \\
% +\frac{\omega_0^2}{4 c_o^2} \int_0^L C ( \bq -\bxi \frac{c_o z}{\omega_0} ) -C({\bf 0}) dz \Big) d\bxi .
% \label{eq:mean1}
% \end{eqnarray}

\subsection{The Fourth-order Moment in the Scintillation Regime}
Let us consider the fourth-order moment (\ref{def:mom2})  in the scintillation regime (\ref{sca:sci}).
We assume that the four frequencies are close to each other and
we parameterize them as
\begin{eqnarray*}
\omega_1 = \omega_0+\eps (\Omega_1 + \Omega_2  +\Omega_3),
\quad \quad 
\omega_2 =\omega_0+\eps(-\Omega_1 + \Omega_2  -\Omega_3),\\
\tilde{\omega}_1 =\omega_0+\eps(\Omega_1 - \Omega_2  -\Omega_3), \quad \quad 
\tilde{\omega}_2 =\omega_0+\eps(-\Omega_1 - \Omega_2  +\Omega_3).
\end{eqnarray*}
We parameterize  the four points 
$\bx_1,\bx_2,\by_1,\by_2$ in (\ref{def:generalmoment}) in the special way:
\begin{eqnarray*}
\bx_1 = \frac{\br_1+\br_2+\bq_1+\bq_2}{2}, \quad \quad 
\by_1 = \frac{\br_1+\br_2-\bq_1-\bq_2}{2}, \\
\bx_2 = \frac{\br_1-\br_2+\bq_1-\bq_2}{2}, \quad \quad 
\by_2 = \frac{\br_1-\br_2-\bq_1+\bq_2}{2}.
\end{eqnarray*}
%In particular $\br_1/2$ is the barycenter of the four points $\bx_1,\bx_2,\by_1,\by_2$:
%\begin{eqnarray*}
%\br_1 = \frac{\bx_1+\bx_2+\by_1+\by_2}{2} , \quad \quad 
%\bq_1 = \frac{\bx_1+\bx_2-\by_1-\by_2}{2}, \\
%\br_2 = \frac{\bx_1-\bx_2+\by_1-\by_2}{2}, \quad \quad 
%\bq_2 = \frac{\bx_1-\bx_2-\by_1+\by_2}{2}.
%\end{eqnarray*}
We consider a long propagation distance of the form $z/\eps$.
In the variables $(z/\eps,\bq_1,\bq_2,\br_1,\br_2)$ 
the function $M_{2}^\eps$ satisfies the equation:
\begin{eqnarray}
\nonumber
\frac{\partial M_{2}^\eps}{\partial z} &=& 
\frac{i c_o}{\omega_0 \eps} \big( \nabla_{\br_1}\cdot \nabla_{\bq_1}
+
 \nabla_{\br_2}\cdot \nabla_{\bq_2}
\big)  M_{2}^\eps 
-\frac{ic_o \Omega_1}{\omega_0^2}
 \big( \nabla_{\br_1}\cdot \nabla_{\bq_2}
+
 \nabla_{\br_2}\cdot \nabla_{\bq_1}
\big)  M_{2}^\eps\\
\nonumber
&&
-\frac{ic_o \Omega_2}{2\omega_0^2}
 \big( \Delta_{\br_1}+\Delta_{\br_2}
+
 \Delta_{\bq_2}+\Delta_{\bq_2}
\big)  M_{2}^\eps 
-\frac{ic_o \Omega_3}{\omega_0^2}
 \big( \nabla_{\br_1}\cdot \nabla_{\br_2}
+
 \nabla_{\bq_1}\cdot \nabla_{\bq_2}
\big)  M_{2}^\eps\\
&&
+ \frac{\omega_0^2}{4 c_o^2} {\cal U}_{2}(\bq_1,\bq_2,\br_1,\br_2) M_{2}^\eps   ,
\label{eq:M20}
\end{eqnarray}
with the generalized potential
\begin{eqnarray}
\nonumber
{\cal U}_{2}(\bq_1,\bq_2,\br_1,\br_2) &=& 
{C}(\bq_2+\bq_1) 
+
{C}(\bq_2-\bq_1) 
+
{C}(\br_2+\bq_1) 
+
{C}(\br_2-\bq_1) \\
&&- 
{C}( \bq_2+\br_2) - {C}( \bq_2-\br_2) - 2 {C}({\bf 0})  ,
\end{eqnarray}
and where we have not written terms of order $\eps$.
%Note in particular that the generalized potential does not depend on the barycenter $\br_1$ as the medium is statistically homogeneous.
The initial condition for Eq.~(\ref{eq:M20}) is
$$
M_{2}^\eps(z=0,\bq_1,\bq_2,\br_1,\br_2) = \exp \Big( - \eps^2 \frac{  |\br_1|^2+ |\br_2|^2}{2r_0^2}
- \eps^2 \frac{ |\bq_1|^2+ |\bq_2|^2}{2\rho_0^2} \Big).
$$
The Fourier transform (in $\bq_1$, $\bq_2$, $\br_1$, and $\br_2$) of the fourth-order moment
of the paraxial Green's function 
is defined by:
\begin{eqnarray}
\nonumber
\hat{M}_{2}^\eps\big( \frac{z}{\eps} ,\bxi_1,\bxi_2,\bzeta_1,\bzeta_2\big) 
&=& 
\iint_{\RR^2\times \RR^2 \times \RR^2\times \RR^2} M_{2}^\eps\big( \frac{z}{\eps} ,\bq_1,\bq_2,\br_1,\br_2\big) \\
&& \hspace*{-0.4in}
\times
\exp  \big(- i\bq_1 \cdot \bxi_1- i\br_1\cdot \bzeta_1- i\bq_2 \cdot \bxi_2- i\br_2\cdot \bzeta_2\big) d\br_1d\br_2 d\bq_1d\bq_2
. \hspace*{0.3in} 
\end{eqnarray}
Let us absorb the rapid phase in the function
\begin{equation}
\label{eq:renormhatM2}
\widetilde{M}^\eps_2 \big( \frac{z}{\eps} ,\bxi_1,\bxi_2,\bzeta_1,\bzeta_2\big) = 
\hat{M}_2^\eps \big( \frac{z}{\eps} ,\bxi_1,\bxi_2,\bzeta_1,\bzeta_2\big)
 \exp \Big( \frac{i  c_oz}{\omega_0 \eps} (\bxi_2 \cdot \bzeta_2  +   \bxi_1 \cdot \bzeta_1) \Big)
 .
\end{equation}
In the scintillation regime   (\ref{sca:sci}) the rescaled function $\widetilde{M}^\eps_2$  satisfies the equation with fast phases
\begin{eqnarray}
\nonumber
 \frac{\partial \widetilde{M}^\eps_2}{\partial z} 
&=&
\frac{ic_o \Omega_1}{\omega_0^2}
 \big( \bxi_1\cdot \bzeta_2
+
 \bxi_2\cdot \bzeta_1\big)  \widetilde{M}^\eps_2\\
\nonumber
&&
+\frac{ic_o \Omega_2}{2\omega_0^2}
 \big(|\bxi_1|^2+|\bxi_2|^2 +|\bzeta_1|^2 +|\bzeta_2|^2 
\big)  \widetilde{M}^\eps_2 \\
\nonumber
&&
+\frac{ic_o \Omega_3}{\omega_0^2}
 \big(\bxi_1\cdot \bxi_2 +\bzeta_1\cdot \bzeta_2
\big) \widetilde{M}^\eps_2\\
\nonumber
&&+\frac{\omega_0^2}{4 (2\pi)^2 c_o^2} 
\int_{\RR^2} \hat{C}(\bk) \bigg[  - 2 \widetilde{M}^\eps_2(  \bxi_1 ,\bxi_2, \bzeta_1,\bzeta_2)   \\
\nonumber
&&+
\widetilde{M}^\eps_2 (  \bxi_1-\bk,\bxi_2-\bk,  \bzeta_1,\bzeta_2) 
e^{i\frac{c_o z}{\eps \omega_0} \bk \cdot  (\bzeta_2 + \bzeta_1)} \\
&& \nonumber
+ 
\widetilde{M}^\eps_2  (  \bxi_1-\bk,\bxi_2,  \bzeta_1,\bzeta_2-\bk) 
e^{i\frac{c_o z}{\eps \omega_0} \bk \cdot ( \bxi_2 +  \bzeta_1)}\\
\nonumber
&& +\widetilde{M}^\eps_2 (  \bxi_1+\bk,\bxi_2-\bk,  \bzeta_1,\bzeta_2) 
e^{i\frac{c_o z}{\eps \omega_0} \bk \cdot (\bzeta_2 -   \bzeta_1)}
\\
&& \nonumber
+ 
\widetilde{M}^\eps_2  (  \bxi_1+\bk,\bxi_2,  \bzeta_1,\bzeta_2-\bk) 
e^{i\frac{c_o z}{\eps \omega_0} \bk \cdot (\bxi_2 -  \bzeta_1)}
\\
\nonumber
&&
-
 \widetilde{M}^\eps_2  (  \bxi_1,\bxi_2-\bk, \bzeta_1, \bzeta_2-\bk) 
 e^{i\frac{c_o z}{\eps \omega_0} (  \bk \cdot (\bzeta_2+\bxi_2)-|\bk|^2 )} \\
 &&
- \widetilde{M}^\eps_2  ( \bxi_1,\bxi_2-\bk,  \bzeta_1,\bzeta_2+\bk) 
e^{i\frac{c_o z}{\eps \omega_0} ( \bk \cdot (\bzeta_2-\bxi_2)+|\bk|^2)}
\bigg] d \bk ,
\label{eq:tildeNeps}
\end{eqnarray}
starting from 
\begin{equation}
\label{eq:initialtildeM2eps}
\widetilde{M}^\eps_2 (z=0,\bxi_1,\bxi_2, \bzeta_1, \bzeta_2 ) = (2\pi)^8 \phi^\eps_{\rho_0} ( \bxi_1 )
\phi^\eps_{\rho_0} ( \bxi_2 )
\phi^\eps_{r_0} ( \bzeta_1 )\phi^\eps_{r_0} ( \bzeta_2 ) ,
\end{equation}
where $\phi^\eps_\rho$ is defined by (\ref{def:phiepsrho}).
The following result  shows that $\widetilde{M}^\eps_2$ exhibits a multi-scale behavior
as $\eps \to 0$, with some components evolving at the scale $\eps$ and 
some components evolving at the order one scale.
\begin{proposition}
\label{prop:3}%
The function $\widetilde{M}^\eps_2 (z/\eps,\bxi_1,\bxi_2, \bzeta_1,\bzeta_2 ) $ defined by (\ref{eq:renormhatM2}) can be expanded as
\begin{eqnarray}
\nonumber
&&\hspace*{-0.2in}
 \widetilde{M}^\eps_2\big( \frac{z}{\eps} ,\bxi_1,\bxi_2,  \bzeta_1,\bzeta_2 \big)  =
K(z)^2
\phi^\eps_{\rho_0} ( \bxi_1 )
\phi^\eps_{\rho_0} ( \bxi_2 )
\phi^\eps_{r_0} ( \bzeta_1 )
\phi^\eps_{r_0} ( \bzeta_2 ) \\
\nonumber
&& 
\hspace*{-0.1in}
+
\frac{K(z)}{2}
\phi^\eps_{\rho_0} \big( \frac{\bxi_1-\bxi_2}{\sqrt{2}}\big)
\phi^\eps_{r_0} ( \bzeta_1 )
\phi^\eps_{r_0} ( \bzeta_2 )
A\big(z, \frac{\bxi_2+\bxi_1}{2} ,\frac{\bzeta_2 + \bzeta_1}{\eps},\Omega_2+\Omega_3\big) \\
\nonumber
&& \hspace*{-0.1in}
+
\frac{K(z)}{2}
\phi^\eps_{\rho_0} \big( \frac{\bxi_1+\bxi_2}{\sqrt{2}}\big)
\phi^\eps_{r_0} ( \bzeta_1 )
\phi^\eps_{r_0} ( \bzeta_2 )
A \big(z, \frac{\bxi_2-\bxi_1}{2} ,\frac{\bzeta_2- \bzeta_1}{\eps} ,\Omega_2-\Omega_3\big) \\
\nonumber
&& \hspace*{-0.1in}
+
\frac{K(z)}{2}
\phi^\eps_{R_0} \big( \frac{\bxi_1-\bzeta_2}{\sqrt{2}}\big)
\phi^\eps_{r_0} ( \bzeta_1 ) \phi^\eps_{\rho_0} ( \bxi_2 )
A\big(z, \frac{\bzeta_2+\bxi_1}{2} ,\frac{\bxi_2+ \bzeta_1}{\eps} ,\Omega_2+\Omega_1 \big) \\
\nonumber
&& \hspace*{-0.1in}
+\frac{K(z)}{2}
\phi^\eps_{R_0} \big( \frac{\bxi_1+\bzeta_2}{\sqrt{2}}\big)
\phi^\eps_{r_0} ( \bzeta_1 ) \phi^\eps_{\rho_0} ( \bxi_2 )
A\big(z, \frac{\bzeta_2-\bxi_1}{2} ,\frac{\bxi_2- \bzeta_1}{\eps} , \Omega_2-\Omega_1  \big) \\
\nonumber
&&\hspace*{-0.1in}
+ \frac{1}{4}
\phi^\eps_{r_0} ( \bzeta_1 )\phi^\eps_{r_0} (\bzeta_2) 
A \big( z, \frac{\bxi_2+\bxi_1}{2},   \frac{\bzeta_2+ \bzeta_1}{\eps} ,\Omega_2+\Omega_3 \big) \\
\nonumber
&&\hspace*{1.1in} \times
A \big( z, \frac{\bxi_2-\bxi_1}{2},   \frac{\bzeta_2- \bzeta_1}{\eps} ,\Omega_2-\Omega_3 \big) \\
\nonumber
&&\hspace*{-0.1in} 
+ \frac{1}{4} \phi^\eps_{r_0} ( \bzeta_1 ) \phi^\eps_{\rho_0} (\bxi_2)
A \big( z, \frac{\bzeta_2+\bxi_1}{2},  \frac{\bxi_2+ \bzeta_1}{\eps}   , \Omega_2+\Omega_1\big)\\
\nonumber
&&\hspace*{1.1in}
\times A \big( z, \frac{\bzeta_2-\bxi_1}{2},  \frac{\bxi_2- \bzeta_1}{\eps}  , \Omega_2-\Omega_1\big)
\\
&&\hspace*{-0.1in}
 + R^\eps_2  (z ,\bxi_1,\bxi_2 ,  \bzeta_1 ,\bzeta_2 )   ,
\label{eq:propsci11}
\end{eqnarray}
where 
\begin{equation}
\frac{1}{R_0^2} = \frac{1}{2} \Big( \frac{1}{r_0^2} + \frac{1}{\rho_0^2} \Big) ,
\end{equation}
the function $K$ is defined by (\ref{def:K}), 
%\begin{eqnarray}
%\label{def:K}
%K(z) &=& (2\pi)^8 \exp\Big(- \frac{k_0^2}{2} C({\bf 0}) z\Big) , 
%\end{eqnarray}
the function $(z,\bxi)\mapsto A(z,\bxi,\bzeta,\Omega)$ is the solution of (\ref{def:A}),
%\begin{eqnarray}
%\label{def:A}
%\partial_z A= - \frac{i c_o\Omega}{\omega_0^2} |\bxi|^2 A +\frac{\omega_0^2}{4(2\pi)^2}
%\int \hat{C}(\bk) A(\bxi-\bk) e^{\frac{i c_oz}{\omega_0} \bk \cdot \bzeta} d\bk
%+\frac{\omega_0^2}{8 (2\pi)^2 c_o^2} \hat{C}(\bxi) e^{\frac{i c_oz}{\omega_0} \bxi \cdot \bzeta} ,
%\end{eqnarray}
%starting from $A(z=0,\bxi,\bzeta,\Omega)=0$,
and the function $R^\eps_2$ satisfies
\begin{equation}
\sup_{z \in [0,Z]} \| R^\eps_2 (z,\cdot,\cdot,\cdot,\cdot) \|_{L^1(\RR^2\times \RR^2\times \RR^2\times \RR^2)} 
\stackrel{\eps \to 0}{\longrightarrow}  0  ,
\end{equation}
for any $Z>0$.
\end{proposition}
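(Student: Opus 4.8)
The plan is to follow the scheme of the proof of Proposition \ref{prop:1}, now with a longer list of building blocks to keep track of. First I would strip off the slow quadratic phases. Writing $\Phi(\bxi_1,\bxi_2,\bzeta_1,\bzeta_2)$ for the coefficient of $z$ in the first three lines of the right-hand side of (\ref{eq:tildeNeps}) (the $\Omega_1$, $\Omega_2$, $\Omega_3$ terms), I set
\[
\check M_2^\eps(z,\bxi_1,\bxi_2,\bzeta_1,\bzeta_2) = \widetilde M_2^\eps\Big(\frac{z}{\eps},\bxi_1,\bxi_2,\bzeta_1,\bzeta_2\Big)\, e^{-i \Phi(\bxi_1,\bxi_2,\bzeta_1,\bzeta_2)\, z}.
\]
Then (\ref{eq:tildeNeps}) becomes $\partial_z \check M_2^\eps = \check{\mathcal L}^\eps \check M_2^\eps$, where $\check{\mathcal L}^\eps$ is obtained from the six-term scattering integral of (\ref{eq:tildeNeps}) by conjugation with $e^{-i\Phi z}$; as in the proof of Proposition \ref{prop:1}, this leaves the scattering kernels of modulus one while preserving the leading $O(1/\eps)$ oscillation in the integration variable $\bk$. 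By Bochner's theorem ($\hat C \geq 0$, $\int_{\RR^2}\hat C(\bk)\,d\bk = (2\pi)^2 C(\mathbf{0})$) the operator $\check{\mathcal L}^\eps$ is then bounded on $L^1(\RR^2\times\RR^2\times\RR^2\times\RR^2)$ uniformly in $\eps$, with $\|\check{\mathcal L}^\eps\|_{L^1\to L^1} \leq \frac{2\omega_0^2}{c_o^2}C(\mathbf{0})$.

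Next I would introduce the candidate $\check N^\eps$, namely the right-hand side of (\ref{eq:propsci11}) with $R_2^\eps$ deleted and each factor $A(z,\cdot,\bzeta/\eps,\Omega)$ replaced by its de-phased version $\check A^\eps$ defined as in the proof of Proposition \ref{prop:1}, chosen so that $\check N^\eps e^{i\Phi z}$ reproduces the displayed expansion. Here I would use that each $A$-factor solves (\ref{def:A}), that $K$ solves $K'(z) = -\frac{\omega_0^2}{4c_o^2}C(\mathbf{0})K(z)$, and that the uniform bounds $\sup_{z\in[0,Z],\,\bzeta\in\RR^2}\|A(z,\cdot,\bzeta,\Omega)\|_{L^1}<\infty$ and $\sup_{z\in[0,Z],\,\bzeta\in\RR^2}\|\partial_z A(z,\cdot,\bzeta,\Omega)\|_{L^1}<\infty$ established inside the proof of Proposition \ref{prop:1} remain available. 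Setting $\check R^\eps = \check M_2^\eps - \check N^\eps$, one gets $\partial_z \check R^\eps = \check{\mathcal L}^\eps\check R^\eps + \check S^\eps$ with $\check S^\eps = -\partial_z\check N^\eps + \check{\mathcal L}^\eps\check N^\eps$, and $\check R^\eps(z=0) = 0$ by (\ref{eq:initialtildeM2eps}).

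The core of the argument, and the main obstacle, is to prove $\sup_{z\in[0,Z]}\|\check S^\eps(z,\cdot,\cdot,\cdot,\cdot)\|_{L^1}\to 0$ as $\eps\to 0$. I would compute $\partial_z\check N^\eps$ from (\ref{def:A}) and Leibniz' rule (for the two pieces that are products $A\cdot A$), and compute $\check{\mathcal L}^\eps\check N^\eps$ by letting each of the six scattering terms of (\ref{eq:tildeNeps}) act on each of the seven explicit pieces of $\check N^\eps$. The matching mechanism is the one already at work in Proposition \ref{prop:1}: because every $\phi^\eps_\rho$ is an $L^1$ approximate identity of width $O(\eps)$, whenever a scattering term shifts the argument of a $\phi^\eps$-factor by $\bk$ the integral concentrates on $|\bk| = O(\eps)$, so the rapid phase $e^{ic_o z\, \bk\cdot(\cdots)/(\eps\omega_0)}$ becomes an $O(1)$ phase of precisely the type appearing in (\ref{def:A}); collecting these concentrated contributions produces, in a cascade, the decay terms ($2KK'$, $K'A$, etc.) from the $-2\widetilde M_2$ part, the source terms $K\hat C(\bxi)e^{\,ic_oz\bxi\cdot\bzeta/\omega_0}$ feeding the single-$A$ pieces, and then the forward-scattering transfer terms of the $A$'s feeding the double-$A$ pieces, thereby cancelling the matching pieces of $\partial_z\check N^\eps$ up to $o(1)$. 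When a shift falls on an argument carrying no $\phi^\eps$-concentration, the phase oscillates at rate $1/\eps$ and the contribution tends to $0$ in $L^1$ by the Riemann--Lebesgue lemma (using $\hat C\in L^1$, $\hat C$ continuous, and the uniform $L^1$-bounds on the $A$'s). The specific parameterizations of $\omega_1,\omega_2,\tilde\omega_1,\tilde\omega_2$ by $\Omega_1,\Omega_2,\Omega_3$ and of $\bx_1,\bx_2,\by_1,\by_2$ by $\bq_1,\bq_2,\br_1,\br_2$ are exactly what make these cancellations close: the four terms $\omega_i\tilde\omega_j C(\bx_i-\by_j)$ of $\mathcal{U}_2$, after concentration, feed the four single-$A$ pieces (the pairings $(\omega_1\tilde\omega_1,\omega_2\tilde\omega_2)$ and $(\omega_1\tilde\omega_2,\omega_2\tilde\omega_1)$), their combined action produces the two double-$A$ pieces, and the two terms $-\omega_1\omega_2 C(\bx_1-\bx_2)$, $-\tilde\omega_1\tilde\omega_2 C(\by_1-\by_2)$ yield only rapidly oscillating, hence negligible, corrections. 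The residual discrepancies are then of three harmless kinds, handled as in the proof of Proposition \ref{prop:1}: replacing $\hat C(\bxi-\eps\bk)$ by $\hat C(\bxi)$ and $e^{i\eps(\cdots)z}$ by $1$ under integrals against $\phi^1$-kernels (Lebesgue dominated convergence with continuity of $\hat C$); genuinely oscillatory remainders vanishing by Riemann--Lebesgue; and $O(\eps)$ remainders from the $|\bk|^2$ contributions to the phases and from the double-shift cross terms produced when $\check{\mathcal L}^\eps$ acts on the products $A\cdot A$.

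Finally, Gronwall's lemma applied to $z\mapsto \|\check R^\eps(z,\cdot,\cdot,\cdot,\cdot)\|_{L^1}$ gives $\sup_{z\in[0,Z]}\|\check R^\eps\|_{L^1}\to 0$, and restoring the slow phase yields $R_2^\eps = \check R^\eps\, e^{i\Phi z} + (\text{pieces of }\check N^\eps)\,(e^{i\eps^2(\cdots)z} - 1)$, whose last contributions vanish in $L^1$ because $|e^{i\theta}-1|\leq|\theta|$ with $\theta = O(\eps^2)$ and the quadratically weighted $L^1$-norms of the $\phi^1$-kernels and of the $A$-factors are finite, exactly as at the end of the proof of Proposition \ref{prop:1}. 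This gives the claimed estimate on $R_2^\eps$, and the only genuinely delicate point remains the combinatorial bookkeeping in the $\check S^\eps$ estimate, i.e. confirming that all six scattering terms match against the seven building blocks with exactly the right phases.
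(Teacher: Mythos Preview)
Your proposal is correct and follows essentially the same approach that the paper indicates: the paper does not write out a detailed proof of Proposition~\ref{prop:3} but simply remarks that it is an extension of Proposition~1 in \cite{garniers4} (the case $r_0=\rho_0$, $\Omega=0$) and that the proof follows the same lines, which is precisely the scheme of Proposition~\ref{prop:1} that you have carried over---stripping the slow phase, bounding the scattering operator on $L^1$, matching the source $\check S^\eps$ against the seven-term ansatz via the approximate-identity mechanism of the $\phi^\eps$'s, and closing with Gronwall. Your identification of the combinatorial bookkeeping in $\check S^\eps$ as the only genuinely delicate point is accurate, and your account of which scattering terms feed which building blocks (and which are killed by the $|\bk|^2$ rapid phases) is consistent with the structure of (\ref{eq:tildeNeps}).
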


This result is an extension of Proposition  1 in \cite{garniers4}
 in which the case $r_0=\rho_0$ and $\Omega=0$ is addressed (whose proof follows the same lines as the one
 of Proposition~\ref{prop:3}).
It shows that, if we deal with an integral of $\widetilde{M}^\eps_2$ against a bounded function,
then we can replace $\widetilde{M}^\eps_2$ by the right-hand side of (\ref{eq:propsci11}) without the $R^\eps_2$ term
up to a negligible error when $\eps$ is small.  Note also that the result shows that the fourth-order moment 
$M_2$ can be expressed in terms of second-order moment $A$ in (\ref{def:A})  and in terms of the source field,
which can be seen as a `quasi-Gaussian property' \cite{garniers4}.

\subsection{The Strongly Scattering Regime}
Our goal is to find an explicit expression of the function $A$ defined by (\ref{def:A}).
The equation (\ref{def:A}) for $A$ (in which $\bzeta$ and $\Omega$ are frozen parameters)
can be solved exactly when $\Omega=0$:
\begin{equation}
\label{def:A0}
A(z,\bxi,\bzeta,0) =  \frac{K(z)}{(2\pi)^2} 
 \int_{\RR^2}  \Big[  \exp \Big( \frac{\omega_0^2}{4 c_o^2} \int_0^z C\big( \bx + \frac{c_o \bzeta}{\omega_0} z' \big) dz' \Big) -1\Big]
   \exp \big( -i \bxi\cdot \bx  \big)
 d\bx  .   
\end{equation}

When $\Omega \neq 0$ it is possible to find an approximate expression for $A(z,\bxi,\bzeta,\Omega)$
in the strongly scattering regime as we show below.
The strongly scattering regime corresponds to  
\begin{equation}\label{def:deep}
\omega_0^2 C({\bf 0})L/c_o^2 \gg 1,
\end{equation}
which means that the propagation distance $L$ is larger than the scattering mean free path defined by
\begin{equation}
\label{def:lsca}
\ell_{\rm sca} = \frac{8c_o^2 }{\omega_0^2 C({\bf 0})}.
\end{equation}
Indeed, the scattering mean free path is the characteristic decay length of the mean Green's function,
as shown by the form of the mean Green's function obtained by It\^o's formula:
$$
\EE [ \hat{G}(\omega,L,\bx,\by)  ] = \hat{G}_0(\omega,L,\bx,\by)  \exp\Big( - \frac{\omega_0^2 C({\bf 0})L}{8 c_o^2} \Big),
$$
where $\hat{G}_0$ is the homogeneous Green's function:
$$
\hat{G}_0(\omega,L,\bx,\by)
= \frac{  \omega}{2 i \pi c_o L}   \exp \Big(  i \frac{\omega |\bx-\by|^2}{2c_o L} \Big)    ,
$$
see also the discussion in Appendix \ref{app:a}.   

We assume that the medium fluctuations are isotropic and smooth enough so that the coefficient
\begin{equation}
\label{def:D}
D =  \frac{1}{(2\pi)^2} \int_{\RR^2} \hat{C}(\bk)|\bk|^2 d\bk 
\end{equation}
is finite. The coefficient $D$ is homogeneous to the inverse of a length.
This length is the paraxial length, ie, the propagation distance beyond which the 
paraxial approximation is not valid anymore.
Indeed, in the strongly scattering regime $L \gg \ell_{\rm sca}$ (which is equivalent to $\omega_0^2 C({\bf 0})L/c_o^2 \gg 1$),
the second moment of the Green's function is 
(see Proposition 12.7 \cite{noisebook}):
\begin{eqnarray}
\nonumber
&& \EE \big[ \hat{G}(\omega,L,\bx,\by) 
\overline{\hat{G}(\omega,L,\bx',\by) } \big] 
=
\hat{G}_0(\omega,L,\bx,\by) 
\overline{\hat{G}_0(\omega,L,\bx',\by) } 
 \exp \Big( -  \frac{ |\bx-\bx'|^2}{X_c^2(L)} \Big)   , 
\end{eqnarray}
where $X_c(L)$ is the correlation length of the wave field
\begin{equation}
X_c(L) = \frac{\sqrt{3} c_o}{\sqrt{D} \omega_0 \sqrt{L}} .
\end{equation}
When the correlation length of the field becomes of the order of the wavelength, ie, when $\omega_0 X_c(L) /c_o \sim 1$,
then the paraxial approximation is not valid anymore. The paraxial distance $\ell_{\rm par}$ such that 
$\omega_0 X_c(\ell_{\rm par}) /c_o =1$ is 
\begin{equation}\label{def:lpar}
\ell_{\rm par}= \frac{3}{D} .
\end{equation}
Note that $\ell_{\rm par} \gg \ell_{\rm sca}$. The ratio $\ell_{\rm par} / \ell_{\rm sca}$ is of the order of the square of the ratio
of the correlation length of the medium over the wavelength.

%
% is related to the transport mean free path. Indeed,
%in the strongly scattering regime $L \gg \ell_{\rm sca}$,
%the second moment of the Green's function is 
%(see Proposition 12.7 \cite{noisebook}):
%\begin{eqnarray}
%\nonumber
%&& \EE \big[ \hat{G}(\omega,L,\bx,\by) 
%\overline{\hat{G}(\omega,L,\bx',\by) } \big] 
%=
%\hat{G}_0(\omega,L,\bx,\by) 
%\overline{\hat{G}_0(\omega,L,\bx',\by) } 
% \exp \Big( -  \frac{ |\bx-\bx'|^2}{X_c^2(\omega)} \Big)   , 
%\end{eqnarray}
%where 
%\begin{equation}
%X_c(\omega) = \frac{\sqrt{3} c_o}{\sqrt{D} \omega \sqrt{L}} .
%\end{equation}
%This shows that the wave forgets its initial direction (the $z$-axis) 
%when $1/X_c \sim \omega_0/c_o$.
%Therefore the transport mean path is 
%\begin{equation}
%\ell_{\rm tr}= \frac{3}{D} .
%\end{equation}
%Note that $\ell_{\rm tr} > \ell_{\rm sca}$ (the ratio is of the order of the square of the ratio
%of the correlation length of the medium over the wavelength) 
%because scattering in the white-noise paraxial
%regime if highly anisotropic.

\begin{proposition}
\label{prop:1b}
When $L \gg \ell_{\rm sca}$, the function $A$ solution of (\ref{def:A}) 
can be approximated by 
the solution of the parabolic partial differential equation
\begin{equation}
\label{eq:tildeA}
\partial_z  {A}_{\rm s} = \frac{i c_o \Omega}{\omega_0^2} |\bxi|^2  {A}_{\rm s}+\frac{\omega_0^2 D}{16c_o^2} 
\Big[  \Delta_\bxi  {A}_{\rm s}-\frac{z^2 c_o^2}{ \omega_0^2} |\bzeta|^2  {A}_{\rm s} -2i\frac{z c_o}{\omega_0} \bzeta\cdot 
\nabla_\bxi  {A}_{\rm s}\Big] ,
\end{equation}
starting from $ {A}_{\rm s}(z=0,\bxi,\bzeta,\Omega) = (2\pi)^4 \delta(\bxi)$.
\end{proposition}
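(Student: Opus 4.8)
The plan is to derive the parabolic equation (\ref{eq:tildeA}) from the integro-differential equation (\ref{def:A}) by two reductions, both legitimized by the strong-scattering condition $L\gg\ell_{\rm sca}$: a diffusion (small-wavenumber) approximation of the nonlocal scattering operator, and the replacement of the $z$-distributed source term together with the vanishing initial datum by an effective Dirac initial condition. Throughout, write $\ell_{\rm c}$ for the width of $C$ (the correlation radius of the medium).

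First I would carry out the diffusion approximation. In (\ref{def:A}) one Taylor-expands both $A(z,\bxi-\bk,\bzeta,\Omega)$ and $e^{i(c_o z/\omega_0)\bk\cdot\bzeta}$ to second order in $\bk$. Since the medium is isotropic, $\hat C$ is radial, so $\int_{\RR^2}\hat C(\bk)\,\bk\,d\bk=0$ and $\int_{\RR^2}\hat C(\bk)\,k_ik_j\,d\bk=\tfrac{(2\pi)^2D}{2}\delta_{ij}$ with $D$ given by (\ref{def:D}); the odd-in-$\bk$ contributions drop out and the quadratic ones collect, after multiplication by the prefactor $\omega_0^2/(4(2\pi)^2c_o^2)$, into
\[
\frac{\omega_0^2}{4(2\pi)^2c_o^2}\int_{\RR^2}\hat C(\bk)\Big[A(\bxi-\bk)e^{i\frac{c_o z}{\omega_0}\bk\cdot\bzeta}-A(\bxi)\Big]d\bk
\;\approx\;\frac{\omega_0^2D}{16c_o^2}\Big[\Delta_\bxi A-\frac{z^2c_o^2}{\omega_0^2}|\bzeta|^2A-\frac{2izc_o}{\omega_0}\bzeta\cdot\nabla_\bxi A\Big],
\]
which is precisely the parabolic operator appearing in (\ref{eq:tildeA}); the term $i(c_o\Omega/\omega_0^2)|\bxi|^2A$ of (\ref{def:A}) is left untouched.

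Next I would treat the source. Since $K(z)$ in (\ref{def:K}) decays on the length $\tfrac12\ell_{\rm sca}\ll L$, on the macroscopic scale the source $\tfrac{\omega_0^2}{4(2\pi)^2c_o^2}K(z)\hat C(\bxi)e^{i(c_oz/\omega_0)\bxi\cdot\bzeta}$ is concentrated near $z=0$, where the phase equals one; its integral over $z$ is $\tfrac{\omega_0^2}{4(2\pi)^2c_o^2}\hat C(\bxi)\int_0^\infty K(z)\,dz=\tfrac{(2\pi)^2}{C(\mathbf 0)}\hat C(\bxi)$, a bump supported at wavenumbers $|\bxi|\lesssim 1/\ell_{\rm c}$ of total mass $(2\pi)^4$ (using $\int_{\RR^2}\hat C=(2\pi)^2C(\mathbf 0)$). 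Because the diffusive flow just identified spreads $A$ over the much larger $\bxi$-scale $\omega_0\sqrt{DL}/c_o\sim(1/\ell_{\rm c})\sqrt{L/\ell_{\rm sca}}$, this bump is indistinguishable from $(2\pi)^4\delta(\bxi)$ at that scale. Running the zero initial datum of $A$ with this concentrated source through the diffusion operator therefore yields $A_{\rm s}$ solving (\ref{eq:tildeA}) with $A_{\rm s}(z=0,\bxi,\bzeta,\Omega)=(2\pi)^4\delta(\bxi)$.

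The main obstacle is the quantitative error control of the diffusion approximation. One would estimate the remainder of the small-wavenumber expansion: its $n$-th order term involves $\int_{\RR^2}\hat C(\bk)|\bk|^n\,d\bk$, finite for the smooth isotropic $\hat C$ considered, together with $n$-th order $\bxi$-derivatives of $A$; since $A(z,\cdot)$ has Gaussian-type decay on the scale $\omega_0\sqrt{DL}/c_o$, each derivative costs a factor $c_o/(\omega_0\sqrt{DL})$, so relative to the leading $n=2$ term the $n$-th term carries $\big(c_o/(\omega_0\ell_{\rm c}\sqrt{DL})\big)^{n-2}\sim(\ell_{\rm sca}/L)^{(n-2)/2}$, which tends to zero as $L/\ell_{\rm sca}\to\infty$; the same scale separation controls the error incurred in replacing the source profile $\hat C$ by a Dirac mass. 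To close the argument one would compare $A$ and $A_{\rm s}$ in $L^1(\RR^2)$ by a Duhamel/Gronwall estimate, bounding the $L^1$-norm of the difference of the two generators applied to $A_{\rm s}$ and using boundedness in $L^1$ of the scattering operator, in the spirit of the proof of Proposition~\ref{prop:1}.
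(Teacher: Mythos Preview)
Your formal derivation is correct: the second-order Taylor expansion of the integrand in the scattering term, using isotropy of $\hat C$, produces exactly the operator on the right of (\ref{eq:tildeA}), and your treatment of the source (integrating $K$ over $z$ and recognizing $(2\pi)^2\hat C/C({\bf 0})$ as an approximate delta of mass $(2\pi)^4$) identifies the right initial condition. The route, however, is quite different from the paper's, and your proposed error control does not close as stated.

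The paper formalizes the limit $L\gg\ell_{\rm sca}$ by the scaling $C^\delta(\bx)=\delta^{-2}C(\delta\bx)$ (so that $\ell_{\rm sca}^\delta\sim\delta^2$ while $D^\delta=D$) and proceeds probabilistically. It absorbs the source into the initial datum by adding to $A^\delta e^{-i(c_oz/\omega_0)\bxi\cdot\bzeta}$ the decaying coherent term; the resulting $\tilde A^\delta$ solves $\partial_z\tilde A^\delta=iV(\bxi)\tilde A^\delta+\mathcal L^\delta\tilde A^\delta$ from $\tilde A^\delta(0)=(2\pi)^4\delta(\bxi)$, where $\mathcal L^\delta$ is the generator of a rescaled compound Poisson process with jump law $\hat C/((2\pi)^2C({\bf 0}))$. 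Pairing against the adjoint flow with bounded continuous terminal datum $f$ gives a Feynman-Kac representation, and Donsker's invariance principle yields weak convergence of the compound Poisson process to Brownian motion with generator $\mathcal L=\tfrac{\omega_0^2D}{16c_o^2}\Delta_\bxi$. Since the Feynman-Kac functional is bounded (the factor $e^{-i\int V}$ has modulus one) and continuous on path space, one gets $\int f\,\tilde A^\delta\to\int f\,\tilde A$ for every bounded continuous $f$, which is exactly the weak sense of approximation announced after the proposition.

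Your Duhamel/Gronwall closure in $L^1$ hits a genuine obstacle. Even after replacing the blowing-up operator norm $\|\mathcal L_{\rm full}\|_{L^1\to L^1}\sim C({\bf 0})\sim 1/\ell_{\rm sca}$ by the contraction-semigroup bound, one must control $\int_0^L\|(\mathcal L_{\rm full}-\mathcal L_{\rm diff})A_{\rm s}(s)\|_{L^1}\,ds$. But $A_{\rm s}(0)=(2\pi)^4\delta(\bxi)$ is singular: the diffusive profile at time $s$ has width $\sim(\omega_0/c_o)\sqrt{Ds}$, so $\|\partial_\bxi^4 A_{\rm s}(s)\|_{L^1}\sim c_o^4/(\omega_0^4 D^2 s^2)$ and the fourth-order Taylor remainder scales like $\delta^2 s^{-2}$; integrating from $s_0\sim\ell_{\rm sca}^\delta\sim\delta^2$ gives an $O(1)$ bound that does not vanish as $\delta\to 0$. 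Your scale estimate implicitly evaluates derivatives of $A$ at the terminal width $\sim(\omega_0/c_o)\sqrt{DL}$ rather than the singular initial one, which is why the difficulty is hidden. The paper's dual formulation sidesteps this entirely by starting the adjoint flow from smooth terminal data $f$; you could salvage your analytic approach by doing the same (Duhamel on the adjoint rather than on $A_{\rm s}$), which essentially reproduces the paper's argument in PDE language, but the proof as you have sketched it does not yet close.
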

The approximation holds in the sense that, for any continuous and bounded function $f$ and for any $Z>0$:
$$
\int_{\RR^2} {f(\bxi)} {A}(Z,\bxi,\bzeta,\Omega) d\bxi 
\stackrel{L \gg \ell_{\rm sca}}{\simeq} 
\int_{\RR^2}  {f(\bxi)}  {A}_{\rm s}(Z,\bxi,\bzeta,\Omega) d\bxi .
$$

\noindent
\begin{proof}
In the proof we assume  that the correlation function of the medium is of the form
$C^\delta(\bx) = \delta^{-2} C(\delta \bx)$ and we study the convergence as $\delta \to 0$ of the solution of 
(\ref{def:A}). Note that the corresponding coefficient $\ell_{\rm sca}^\delta$ defined by (\ref{def:lsca})
is proportional to $\delta^{2}$ while the corresponding coefficient $D^\delta$ defined by (\ref{def:D})
is independent of $\delta$ in this scaling regime.

In the case $\Omega = 0$ the result can be obtained from the explicit expression (\ref{def:A0}).
By taking the limit $\delta \to 0$ and using the expansion $C^\delta(\bx) = \delta^{-2} C({\bf 0}) - D |\bx|^2/4 +o(1)$, one gets the function
$$
{A}_{\rm s}(z,\bxi,\bzeta,0) = (2\pi)^2
 \int_{\RR^2}    \exp \Big(-  \frac{\omega_0^2 D}{16 c_o^2} \int_0^z \big| \bx + \frac{c_o \bzeta}{\omega_0} z' \big| dz'
  -i \bxi\cdot \bx  \Big)
 d\bx  ,
$$ 
that is the solution of (\ref{eq:tildeA}) 
in the special case $\Omega=0$.

In the general case $\Omega \neq 0$ we use a probabilistic representation and invoke a diffusion-approximation theorem. 
First, we introduce 
$$
\tilde{A}^\delta(z,\bxi,\bzeta,\Omega) = {A}^\delta(z,\bxi,\bzeta,\Omega)
\exp \Big( - \frac{i c_o z}{\omega_0} \bxi\cdot \bzeta\Big)  +(2\pi)^4 \exp \Big( -\frac{\omega_0^2 C^\delta({\bf 0}) z}{4 c_o^2} \Big) \delta(\bxi) .
$$
It is the solution of 
\begin{eqnarray}
\nonumber
\partial_z \tilde{A}^\delta &=& \Big( \frac{i c_o\Omega}{\omega_0^2} |\bxi|^2 
- \frac{i c_o}{\omega_0} \bzeta\cdot \bxi \Big)\tilde{A}^\delta +\frac{\omega_0^2}{4(2\pi)^2c_o^2}
\int_{\RR^2} \hat{C}^\delta(\bk) \big[ \tilde{A}^\delta(\bxi-\bk)  -\tilde{A}^\delta(\bxi)\big] d\bk ,
\end{eqnarray}
starting from $\tilde{A}^\delta(z=0,\bxi,\bzeta,\Omega)=(2\pi)^4 \delta(\bxi)$.
Second we define the operators
\begin{eqnarray}
{\cal L}^\delta f(\bxi) &=&\frac{\omega_0^2}{4(2\pi)^2c_o^2}
\int_{\RR^2} \hat{C}^\delta(\bk) \big[ f(\bxi- \bk)  -f(\bxi)\big] d\bk ,\\
{\cal L}  f(\bxi) &=&\frac{\omega_0^2 D}{16 c_o^2} \Delta_\bxi f(\bxi) .
\end{eqnarray}
Since $\hat{C}^\delta(\bk) =\delta^{-4} \hat{C}(\bk/\delta)$,
the operator ${\cal L}^\delta$ can be written as 
$$
{\cal L}^\delta f(\bxi) = \frac{\omega_0^2}{4(2\pi)^2c_o^2 \delta^2}
\int_{\RR^2} \hat{C} (\bk) \big[ f(\bxi- \delta \bk)  -f(\bxi)\big] d\bk ,
$$
and it is  the infinitesimal generator of the random process $\boldsymbol{\Xi}^\delta(z) = \delta \,\boldsymbol{\Xi}(z/\delta^2)$ defined as
$$
\boldsymbol{\Xi}(z)  =  \boldsymbol{\Xi}(0)+ 
\sum_{k=1}^{N_z} {\itbf K}_k  ,
$$
where $N_z$ is a homogeneous Poisson point process with intensity ${\omega_0^2 C ({\bf 0})}/({4 c_o^2})$
 and $({\itbf K}_k )_{k \geq 1}$ is a sequence of
independent and identically distributed $\RR^2$-valued random variables with the probability density function
$$
 p_{{\itbf K}}(\bk) = \frac{\hat{C} (\bk)}{(2\pi)^2 C ({\bf 0})} .
$$
These random variables have mean zero and finite variance $D/C({\bf 0})$.
The compound Poisson 
process $\boldsymbol{\Xi}$ has independent and stationary increments, with the distribution characterized by
the characteristic function
\begin{equation}
\EE \big[ \exp \big( i \bx \cdot (\boldsymbol{\Xi}(z'+z)-\boldsymbol{\Xi}(z'))\big) \big] = \exp \Big( \frac{\omega_0^2 z}{4 c_o^2} \big( C(\bx)-C({\bf 0}) \big) \Big).
\end{equation}
%To prove this formula, we have used the law of total expectation:
%\begin{eqnarray*}
%\EE \big[ \exp \big( i \bx \cdot (\boldsymbol{\Xi}(z'+z)-\boldsymbol{\Xi}(z'))\big) \big] &=&
%\EE\big[ \EE \big[ \exp \big( i \bx \cdot (\boldsymbol{\Xi}(z'+z)-\boldsymbol{\Xi}(z'))\big) |N_z \big]\big]\\
%&=&
%\EE\big[ \EE \big[ \exp  ( i \bx \cdot {\itbf K}) \big]^{N_z} \big]\\
%&=&
%\EE\big[ \big( C(\bx)/C({\bf 0})\big)^{N_z}\big]\\
%&=& \exp \Big( \frac{\omega_0^2 z}{4 c_o^2} \big( C(\bx)-C({\bf 0}) \big) \Big).
%\end{eqnarray*} 
Let us denote
$$
V(\bxi) = \frac{c_o\Omega}{\omega_0^2} |\bxi|^2 
- \frac{c_o}{\omega_0} \bzeta\cdot \bxi .
$$
For any continuous and bounded function $f$ and $Z>0$, the solution of 
\begin{eqnarray}
\nonumber
\partial_z \tilde{u}^\delta &=& iV(\bxi) \tilde{u}^\delta  - {\cal L}^\delta \tilde{u}^\delta  ,
%\partial_z \tilde{u}^\delta &=& \Big( \frac{i c_o\Omega}{\omega_0^2} |\bxi|^2 
%- \frac{i c_o}{\omega_0} \bzeta\cdot \bxi \Big)\tilde{u}^\delta +\frac{\omega_0^2}{4(2\pi)^2c_o^2}
%\int \hat{C}^\delta(\bk) \big[\tilde{u}^\delta(\bxi-\bk)  -\tilde{u}^\delta(\bxi)\big] d\bk ,
\end{eqnarray}
with the terminal condition $\tilde{u}^\delta(z=Z,\bxi)=f(\bxi)$, can be expressed by Feynman-Kac formula as
$$
\tilde{u}^\delta(z,\bxi) = \EE  \Big[ f \big(\boldsymbol{\Xi}^\delta(Z) \big)  \exp \Big(- i \int_z^Z V(\boldsymbol{\Xi}^\delta(z')) dz'  \Big) 
\Big| \boldsymbol{\Xi}^\delta(z)=\bxi\Big] .
%\tilde{u}^\delta(z,\bxi) = \EE  \Big[ f \big(\boldsymbol{\Xi}^\delta(z) \big)  \exp \Big( i \int_0^z V( \boldsymbol{\Xi}^\delta(z')) dz'\Big) \Big] .
%\tilde{u}^\delta(z,\bxi) = \EE  \Big[ f \big(\boldsymbol{\Xi}^\delta(z) \big)  \exp \Big( \frac{i c_o\Omega}{\omega_0^2}
%\int_0^z |\boldsymbol{\Xi}^\delta(z')|^2 dz' - \frac{i c_o}{\omega_0} \int_0^z \bzeta \cdot \boldsymbol{\Xi}^\delta(z') dz'\Big) \Big] .
$$
We can check that
$$
\partial_z \int_{\RR^2} \overline{\tilde{u}^\delta(z,\bxi) }\tilde{A}^\delta(z,\bxi) d\bxi =0,
$$
therefore
\begin{eqnarray*}
\int_{\RR^2} \overline{f(\bxi)} \tilde{A}^\delta(Z,\bxi) d\bxi &=&  \int_{\RR^2} \overline{\tilde{u}^\delta(Z,\bxi) }\tilde{A}^\delta(Z,\bxi) d\bxi =
 \int_{\RR^2} \overline{\tilde{u}^\delta(0,\bxi) }\tilde{A}^\delta(0,\bxi) d\bxi \\
 &=& (2\pi)^4 \tilde{u}^\delta(0,{\bf 0}) .
\end{eqnarray*}
By Donsker's invariance principle 
the random process $\boldsymbol{\Xi}^\delta$ weakly converges (as a cadlag process) to a Brownian motion ${\itbf W}$
with generator $ {\cal L}$. This shows that $\tilde{u}^\delta(0,{\bf 0})$ converges
to $\tilde{u}(0,{\bf 0})$, where $\tilde{u}(z,\bxi)$ is defined by
$$
\tilde{u}(z,\bxi) = \EE  \Big[ f \big( {\itbf W}(Z) \big)  \exp \Big(- i \int_z^Z V({\itbf W}(z')) dz'\Big) \Big| {\itbf W}(z)=\bxi \Big] ,
$$
which is solution of 
\begin{eqnarray}
\nonumber
\partial_z \tilde{u}  &=&  iV(\bxi) \tilde{u}  - {\cal L} \tilde{u}   ,
\end{eqnarray}
with the terminal condition $\tilde{u} (z=Z,\bxi)=f(\bxi)$.
If we denote by $\tilde{A}$ the solution of
\begin{eqnarray}
\nonumber
\partial_z \tilde{A}  &=& iV(\bxi) \tilde{A}  + {\cal L} \tilde{A}   ,
\end{eqnarray}
with the initial condition  $\tilde{A} (z=0,\bxi)=(2\pi)^4 \delta(\bxi)$,
then we find that 
$$
\int_{\RR^2} \overline{f(\bxi)} \tilde{A}(Z,\bxi) d\bxi =
  \int_{\RR^2} \overline{\tilde{u}(Z,\bxi) }\tilde{A}(Z,\bxi) d\bxi =
 \int_{\RR^2} \overline{\tilde{u}(0,\bxi) }\tilde{A}(0,\bxi) d\bxi = (2\pi)^4 \tilde{u}(0,{\bf 0}) .
$$
This establishes that, for any continuous and bounded  function $f$ and $Z>0$,
$$
\int_{\RR^2} \overline{f(\bxi)} \tilde{A}^\delta(Z,\bxi) d\bxi 
\stackrel{\delta \to 0}{\longrightarrow} 
\int_{\RR^2} \overline{f(\bxi)} \tilde{A}(Z,\bxi) d\bxi ,
$$
which proves that $\tilde{A}^\delta$ converges to $\tilde{A}$.
By considering ${A}_{\rm s}(z,\bxi) = \tilde{A}(z,\bxi) \exp \big( \frac{i c_o z}{\omega_0} \bxi\cdot \bzeta\big)$ we find that ${A}_{\rm s}$ satisfies
(\ref{eq:tildeA}),
%$$
%\partial_z {u}  =  \frac{i c_o\Omega}{\omega_0^2} |\bxi|^2  {u}  + \frac{\omega_0^2 D}{16c_o^2}
%\Big(  \Delta_\bxi {u}   -\frac{z^2 c_o^2}{ \omega_0^2} |\bzeta|^2  {u} -2i\frac{z c_o}{\omega_0} \bzeta\cdot 
%\nabla_\bxi  {u}\Big) ,
%$$
%and $A^\delta(Z,\bxi)$ converges to $A(Z,\bxi)$ for any $Z\geq 0$,
and $A^\delta $ converges to $A_{\rm s}$,
which is the desired result.
\end{proof}

Let us consider the partial inverse Fourier transform
\begin{equation}
\hat{A}_{\rm s}(z,\bx,\bzeta,\Omega) =
 \frac{1}{(2\pi)^2} \int_{\RR^2} {A}_{\rm s}(z,\bxi,\bzeta,\Omega) \exp(i \bxi\cdot\bx)d\bxi.
\end{equation}

\begin{proposition}
\label{prop:4}
The partial inverse Fourier transform $\hat{A}_{\rm s}(z,\bx,\bzeta,\Omega) $ has the form
\begin{equation}
\label{def:hatA}
\hat{A}_{\rm s}(z,\bx,\bzeta,\Omega) = (2\pi)^2 \exp \big[ - a_\Omega(z) -b_\Omega(z) |\bx|^2 -c_\Omega(z) \bx\cdot\bzeta - d_\Omega(z) |\bzeta|^2 \big],
\end{equation}
where 
\begin{eqnarray}
\label{def:aOmega}
a_\Omega(z)&=& \Psi_a   \Big(   \sqrt{\frac{D \Omega}{4 c_o}} z \Big) ,\\
b_\Omega(z) &=& \frac{\omega_0^2 D z}{16 c_o^2} 
\Psi_b \Big(  \sqrt{\frac{D \Omega}{4 c_o}} z \Big)  , \\
c_\Omega(z) &=& \frac{\omega_0 Dz^2}{16c_o} \Psi_c \Big(  \sqrt{\frac{D \Omega}{4 c_o}} z \Big) ,\\
d_\Omega(z) &=& \frac{D z^3}{48} \Psi_d \Big(   \sqrt{\frac{D \Omega}{4 c_o}} z \Big) ,
\label{def:dOmega}
\end{eqnarray}
with the functions $\Psi_{a,b,c,d}$ defined by
\begin{eqnarray}
\Psi_a(s) &=& \ln\Big[\cosh\big( e^{- i \frac{\pi}{4}}s \big)\Big],\\
\Psi_b(s)&=&\frac{\tanh(e^{- i \frac{\pi}{4}}s)}{e^{- i \frac{\pi}{4}}s},\\
\Psi_c(s) &=& 2i \frac{e^{- i \frac{\pi}{4}}s\tanh(e^{- i \frac{\pi}{4}}s) -1 +\cosh^{-1}(e^{- i \frac{\pi}{4}}s)}{ s ^2} ,\\
\Psi_d(s) &=& 1 - \frac{3i}{s^3} \int_0^s \big( e^{- i \frac{\pi}{4}}s'\tanh(e^{- i \frac{\pi}{4}}s') -1 + ( \cosh(e^{- i \frac{\pi}{4}}s'))^{-1}\big)^2 ds' .
%\Psi_d(s) &=& 1 - \frac{3}{s^3} \int_0^s \big( s'\tanh(s') -1 +\cosh^{-1}(s')\big)^2 ds' .
\end{eqnarray}
\end{proposition}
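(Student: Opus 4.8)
The plan is to pass to the partial inverse Fourier transform, in which (\ref{eq:tildeA}) becomes a Schr\"odinger equation with a $z$-dependent quadratic-in-$\bx$ potential. Such a flow propagates Gaussians, so the ansatz (\ref{def:hatA}) should be exact and the four scalar coefficients $a_\Omega,b_\Omega,c_\Omega,d_\Omega$ should satisfy an explicitly solvable cascade of ODEs.

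First I would apply $(2\pi)^{-2}\int_{\RR^2}(\cdot)\,e^{i\bxi\cdot\bx}\,d\bxi$ to (\ref{eq:tildeA}). Using the correspondences $|\bxi|^2\leftrightarrow-\Delta_\bx$, $\Delta_\bxi\leftrightarrow-|\bx|^2$, $\bzeta\cdot\nabla_\bxi\leftrightarrow-i\,\bzeta\cdot\bx$ and completing the square, one finds that $\hat A_{\rm s}$ solves
\begin{equation}
\label{eq:hatAeqn}
\partial_z\hat A_{\rm s} = -\frac{ic_o\Omega}{\omega_0^2}\Delta_\bx\hat A_{\rm s} - \frac{\omega_0^2 D}{16 c_o^2}\Big|\bx + \frac{z c_o}{\omega_0}\bzeta\Big|^2\hat A_{\rm s},
\end{equation}
with the spatially constant initial datum $\hat A_{\rm s}(z=0,\bx,\bzeta,\Omega)=(2\pi)^2$ coming from $A_{\rm s}(z=0,\cdot)=(2\pi)^4\delta$. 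This is a well-posed linear problem, so it suffices to exhibit a solution; since the generator is purely quadratic in $\bx$, any solution issuing from a Gaussian (here degenerate, constant) datum stays Gaussian, which justifies the ansatz (\ref{def:hatA}) for all $z$.

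Next I would substitute (\ref{def:hatA}) into (\ref{eq:hatAeqn}), divide by $\hat A_{\rm s}$, and match the terms constant in $(\bx,\bzeta)$, quadratic in $\bx$, bilinear in $(\bx,\bzeta)$, and quadratic in $\bzeta$. Writing $\beta=-ic_o\Omega/\omega_0^2$, $\kappa=\omega_0^2 D/(16 c_o^2)$ and using $\Delta_\bx|\bx|^2=4$ in two dimensions, this yields, all with zero initial conditions,
\begin{equation}
\label{eq:cascade}
b_\Omega'=\kappa-4\beta b_\Omega^2,\qquad a_\Omega'=4\beta b_\Omega,\qquad c_\Omega'=\tfrac{2\kappa c_o}{\omega_0}z-4\beta b_\Omega c_\Omega,\qquad d_\Omega'=\tfrac{\kappa c_o^2}{\omega_0^2}z^2-\beta c_\Omega^2 .
\end{equation}
The scalar Riccati equation for $b_\Omega$ integrates to $b_\Omega=\sqrt{\kappa/(4\beta)}\tanh(2\sqrt{\kappa\beta}\,z)$; substituting $\beta,\kappa$ and taking the branch $\sqrt{-i}=e^{-i\pi/4}$ gives $2\sqrt{\kappa\beta}\,z=e^{-i\pi/4}s$ with $s=\sqrt{D\Omega/(4c_o)}\,z$, which is exactly the stated $b_\Omega$ with $\Psi_b$. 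Then $a_\Omega=4\beta\int_0^z b_\Omega\,dz'=\ln\cosh(e^{-i\pi/4}s)=\Psi_a(s)$. The equation for $c_\Omega$ is linear with integrating factor $e^{a_\Omega}=\cosh(e^{-i\pi/4}s)$, so one quadrature together with $\int_0^\sigma\sigma'\cosh\sigma'\,d\sigma'=\sigma\sinh\sigma-\cosh\sigma+1$ gives $c_\Omega$ in the stated form with $\Psi_c$; and a last quadrature $d_\Omega=\tfrac{\kappa c_o^2}{3\omega_0^2}z^3-\beta\int_0^z c_\Omega^2\,dz'$ produces the leading term $Dz^3/48$ and, after the change of variables $\sigma'=e^{-i\pi/4}s'$, the integral remainder defining $\Psi_d$. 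As a consistency check one verifies $\Psi_a(s)\to0$ and $\Psi_{b,c,d}(s)\to1$ as $s\to0$, recovering the explicit $\Omega=0$ formula already recorded in the proof of Proposition~\ref{prop:1b}.

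The main point requiring care is the consistent choice of the complex square-root branch (equivalently, of $\sqrt{D\Omega}$ for $\Omega$ of either sign) so that $\cosh(e^{-i\pi/4}s)$, $\tanh(e^{-i\pi/4}s)$ and the above integrals are entire in $s$ and the formulas unambiguous; once this is fixed, the remainder is routine ODE bookkeeping.
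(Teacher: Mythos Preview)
Your approach is essentially identical to the paper's: take the partial inverse Fourier transform of (\ref{eq:tildeA}) to obtain a Schr\"odinger equation with a $z$-dependent quadratic potential, insert the Gaussian ansatz (\ref{def:hatA}), and reduce to the ODE cascade you wrote, which coincides with the paper's system (\ref{eq:odeaOmega1})--(\ref{eq:odeaOmega4}) after unwrapping your abbreviations $\beta,\kappa$. If anything you give more detail than the paper, which simply states ``by solving the system, we obtain the desired result'' without spelling out the Riccati integration, the integrating factor for $c_\Omega$, or the final quadrature for $d_\Omega$.
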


The real parts of the functions $\Psi_{a,b,c,d}$ are plotted in Figure \ref{fig:rePsi}. Note that they are positive valued.
By Propositions \ref{prop:1} and \ref{prop:3} this result gives a complete and explicit expression of the second-order and 
fourth-order moment in the strongly scattering regime $L \gg \ell_{\rm sca}$.

\begin{proof}
By Proposition \ref{prop:1b}, $\hat{A}_{\rm s}$ is solution of 
$$
\partial_z \hat{A}_{\rm s} = - \frac{i c_o \Omega}{\omega_0^2} \Delta_\bx \hat{A}_{\rm s} - \frac{\omega_0^2 D}{16c_o^2} 
\Big[ |\bx|^2  + \frac{z^2 c_o^2}{ \omega_0^2}  |\bzeta|^2  +2\frac{z c_o}{\omega_0} \bzeta\cdot \bx
 \Big] \hat{A}_{\rm s} ,
$$
starting from $\hat{A}_{\rm s}(z=0,\bx,\bzeta,\Omega) =(2\pi)^2 $.
The solution has the form (\ref{def:hatA})
where $(a_\Omega,b_\Omega,c_\Omega,d_\Omega)$ is the solution of the system of ordinary differential equations:
\begin{eqnarray}
\label{eq:odeaOmega1}
\frac{{\rm d}a_\Omega}{{\rm d}z} &=& -i \frac{4 c_o \Omega}{\omega_0^2} b_\Omega ,\\
\frac{{\rm d}b_\Omega}{{\rm d}z} &=& \frac{\omega_0^2 D}{16 c_o^2} +  i \frac{4 c_o \Omega}{\omega_0^2} b_\Omega^2 ,\\
\frac{{\rm d}c_\Omega}{{\rm d}z} &=&  \frac{\omega_0 D z}{8c_o} + i \frac{4 c_o \Omega}{\omega_0^2} b_\Omega c_\Omega ,\\
\frac{{\rm d}d_\Omega}{{\rm d}z} &=&\frac{D z^2}{16} + i \frac{c_o \Omega}{\omega_0^2} c_\Omega^2 ,
\label{eq:odeaOmega4}
\end{eqnarray}
starting from $(a_\Omega,b_\Omega,c_\Omega,d_\Omega)(z=0)=(0,0,0,0)$.
We have $(a_{-\Omega},b_{-\Omega},c_{-\Omega},d_{-\Omega})(z)=(\overline{a_\Omega},\overline{b_\Omega},\overline{c_\Omega},\overline{d_\Omega})(z)$ 
and by solving the system, we obtain the desired result.
\end{proof}

\begin{figure}
\begin{center}
\begin{tabular}{c}
\includegraphics[width=6.0cm]{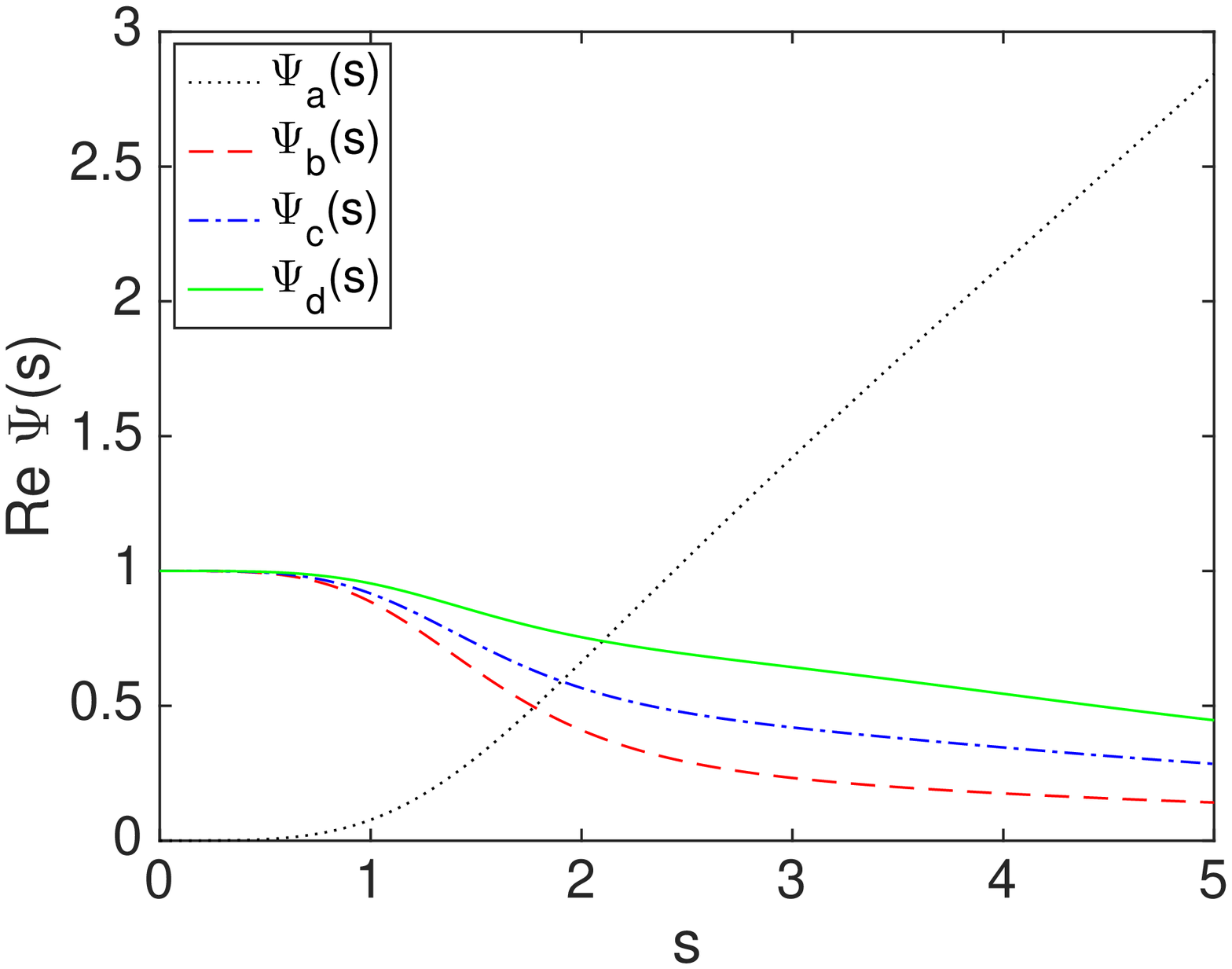}  
\includegraphics[width=6.0cm]{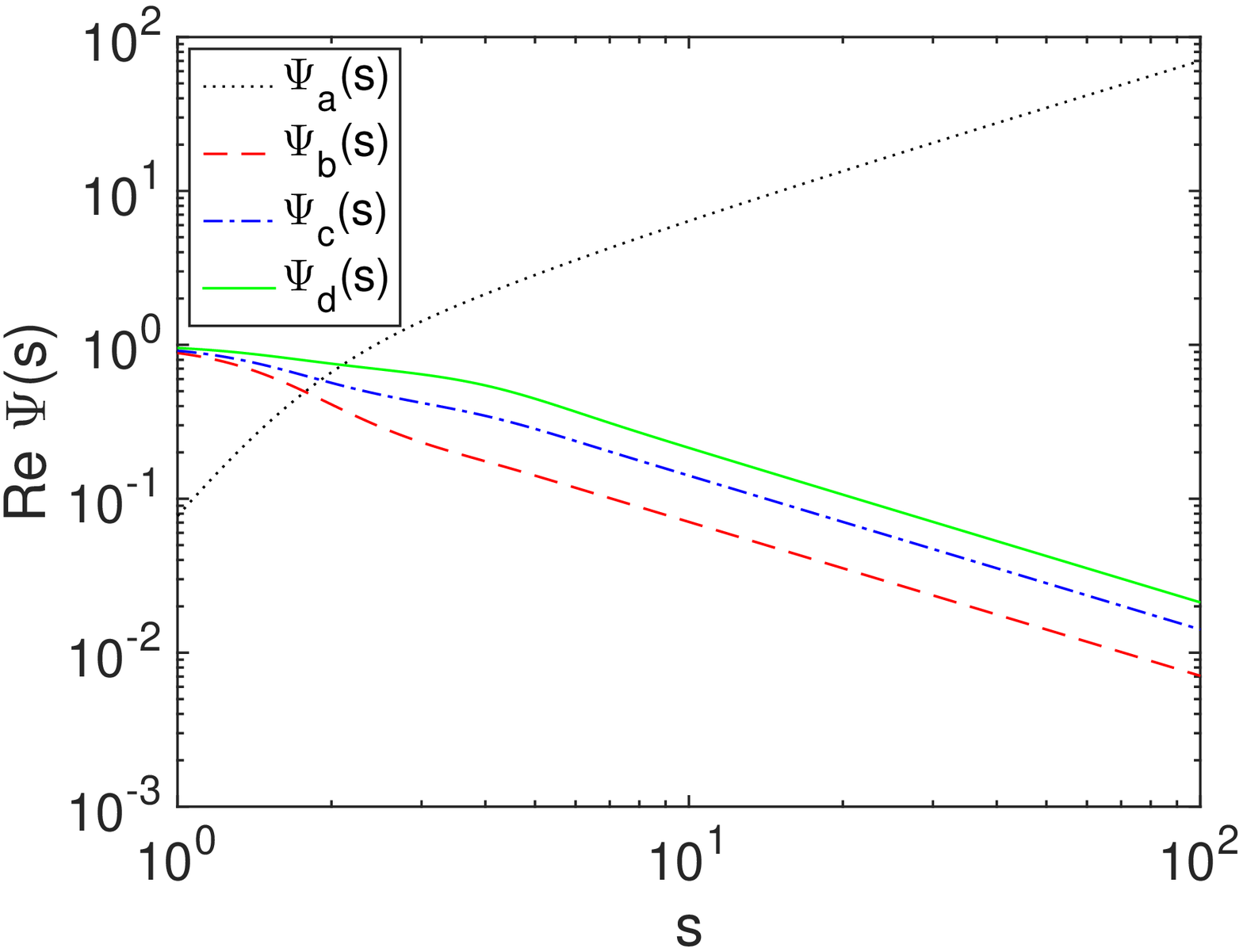}  
\end{tabular}
\end{center}
\caption{Real values of the functions $\Psi_{a,b,c,d}$ in linear scale (left) and log scale (right).}
\label{fig:rePsi}
\end{figure}

When  $D \Omega z^2 / c_o \ll 1$, we can use Taylor series expansions of the functions $\Psi_{a,b,c,d}$ 
to obtain
\begin{eqnarray*}
a_\Omega(z)&\simeq &  \hbox{} - i \frac{D \Omega z^2}{8c_o} + \frac{D^2 \Omega^2z^4}{192 c_o^2}
 +O\Big(\frac{D^3 \Omega^3z^6}{c_o^3}\Big) ,\\
b_\Omega(z) & \simeq &  \frac{\omega_0^2 D z}{16 c_o^2} 
\Big( 1 + i \frac{D \Omega z^2}{12 c_o} 
 - \frac{D^2 \Omega^2z^4}{120 c_o^2}   +O\Big(\frac{D^3 \Omega^3z^6}{c_o^3}\Big) \Big)  , \\
c_\Omega(z) &\simeq &  \frac{\omega_0 D z^2}{16 c_o} \Big( 1 +
i \frac{D \Omega z^2}{16 c_o}  -\frac{7 D^2 \Omega^2 z^4}{1152 c_o^2}+O\Big(\frac{D^3 \Omega^3z^6}{c_o^3}\Big) \Big)   , \\
d_\Omega(z) &\simeq & \frac{D z^3}{48} \Big( 1 + i \frac{3D \Omega z^2}{80 c_o} 
-\frac{3 D^2 \Omega^2 z^4}{896 c_o^2}+O\Big(\frac{D^3 \Omega^3z^6}{c_o^3}\Big) \Big) 
.
\end{eqnarray*}
The leading-order terms (with $\Omega=0$) 
are consistent with the limit of (\ref{def:A0}) in the strongly scattering regime $L \gg \ell_{\rm sca}$.

When $D \Omega z^2 / c_o \gg 1$, we can use asymptotic expressions for the functions 
$\Psi_{a,b,c,d}$ to obtain
\begin{eqnarray*}
a_\Omega(z)&\simeq & e^{ -i \frac{\pi}{4}} \sqrt{\frac{D \Omega}{4 c_o}} z - \ln 2 ,\\
b_\Omega(z) & \simeq & e^{i \frac{\pi}{4}} \sqrt{\frac{\omega_0^4 D}{64 c_o^3 \Omega}}, \\
c_\Omega(z) &\simeq &  e^{i \frac{\pi}{4}} \sqrt{\frac{\omega_0^2 D}{16 c_o \Omega}} z -\frac{i \omega_0}{2\Omega} , \\
d_\Omega(z) &\simeq & e^{i \frac{\pi}{4}} \sqrt{\frac{c_o D}{64 \Omega}} z^2 - \frac{i c_o z}{4 \Omega}
- e^{3 i \frac{\pi}{4}} \sqrt{\frac{ c_o^3}{4D \Omega^3}}  ,
\end{eqnarray*}
up to terms of relative order $\exp( - \sqrt{D \Omega  / (8c_o) } z)$. 
Note that, for the asymptotic expansion of $d_\Omega$, we  used the fact that
$\int_0^\infty 2 (s-1) [s(\tanh(s)-s+\cosh^{-1}(s)] + [s(\tanh(s)-s+\cosh^{-1}(s)]^2 ds=1$ in order to compute the $O(1)$-term.
Compared to the small (or vanishing) $\Omega$ case,
we can see that the growth rate in $z$ of the coefficients are very different.
This will have dramatic impact in the analysis of the refocused wave that we carry out in the next sections.

\subsection{The Scintillation Regime Revisited}
\label{sec:regime:2}%
In the scintillation regime (\ref{sca:sci}) addressed in the previous section, 
the TRM element size $\rho_0^\eps$ is assumed to be of order $\eps^{-1}$,
that is to say, larger than the correlation length of the medium.
We can also address the case where the TRM element size $\rho_0^\eps$ is of the same order as
the correlation length of the medium:
\begin{equation}
\label{sca:sci2}
C^\eps(\bx)= \eps C (\bx) , \quad \quad r_0^\eps = \frac{r_0}{\eps}, \quad \quad \rho_0^\eps = \rho_0 ,
\quad \quad L^\eps = \frac{L}{\eps} .
\end{equation}
The previous analysis can be revisited in the revised scintillation regime (\ref{sca:sci2}) and we get the following results.
\begin{proposition}
\label{prop:1:2}%
In the scintillation regime (\ref{sca:sci2}),
the function $\widetilde{M}^\eps_1(z/\eps,\bxi , \bzeta  ) $ defined by (\ref{eq:renormhatM1}) can be expanded as
\begin{eqnarray}
\nonumber
 \widetilde{M}^\eps_1\big( \frac{z}{\eps} ,\bxi , \bzeta \big)  &=&
\phi^\eps_{r_0/\sqrt{2} } ( \bzeta  ) 
A\big(z, \bxi  ,\frac{\bzeta}{\eps},\Omega\big) 
 + R^\eps_1 (z ,\bxi  ,  \bzeta )   ,
 \label{def:R1eps:2}
\end{eqnarray}
where the function $(z,\bxi)\mapsto A(z,\bxi,\bzeta,\Omega)$ is the solution of 
\begin{equation}
\partial_z A =  \frac{i c_o\Omega}{\omega_0^2} |\bxi|^2 A +\frac{\omega_0^2}{4(2\pi)^2c_o^2}
\int_{\RR^2} \hat{C}(\bk) \big[ A(\bxi-\bk) e^{\frac{i c_oz}{\omega_0} \bk \cdot \bzeta} -A(\bxi)\big] d\bk
\label{def:A:2}
\end{equation}
starting from $A(z=0,\bxi,\bzeta,\Omega)=(2\pi)^4 \phi^1_{\sqrt{2}\rho_0}(\bxi)$,
and the function $R^\eps_1 $ satisfies
\begin{equation}
\sup_{z \in [0,Z]} \| R^\eps_1 (z,\cdot,\cdot) \|_{L^1(\RR^2\times \RR^2 )} 
\stackrel{\eps \to 0}{\longrightarrow}  0  ,
\end{equation}
for any $Z>0$.
\end{proposition}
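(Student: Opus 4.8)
The plan is to follow the proof of Proposition~\ref{prop:1} step by step, keeping track of the only two differences caused by the revised scaling (\ref{sca:sci2}): the TRM element size $\rho_0^\eps=\rho_0$ is now of order one (the order of the medium correlation length) rather than of order $\eps^{-1}$, and as a consequence the transmitted second-order moment no longer carries a Dirac-type ``coherent'' component in the $\bxi$-variable. First I would redo the reduction that led to (\ref{eq:tildeN1eps}) starting from the moment equation (\ref{eq:mom11}): the change of variables $\br=(\bx+\by)/2$, $\bq=\bx-\by$, the splitting $\omega=\omega_0+\eps\Omega$, $\tilde\omega=\omega_0-\eps\Omega$, the rescaling $z\mapsto z/\eps$, the Fourier transform in $(\bq,\br)$ and the phase renormalization (\ref{eq:renormhatM1}) all carry over verbatim, since none of the coefficients of the evolution equation depends on $r_0$ or $\rho_0$; only the initial datum changes. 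With $\rho_0^\eps=\rho_0$ one has $M_1^\eps(z=0,\bq,\br)=\exp(-\eps^2|\br|^2/r_0^2-|\bq|^2/(4\rho_0^2))$, whose $(\bq,\br)$-Fourier transform is $(2\pi)^4\phi^1_{\sqrt{2}\rho_0}(\bxi)\,\phi^\eps_{r_0/\sqrt{2}}(\bzeta)$, so $\widetilde{M}^\eps_1$ again solves (\ref{eq:tildeN1eps}) but now starting from this factorized datum whose $\bxi$-factor is $\eps$-independent. The structural fact to record is that (\ref{eq:tildeN1eps}) is diagonal in $\bzeta$ (the integral operator only shifts $\bxi$, and the $\Omega$-phase is pointwise in $\bzeta$), so the factorization with respect to $\bzeta$ is exactly propagated.

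Second, I would identify the leading term as $\phi^\eps_{r_0/\sqrt{2}}(\bzeta)\,A(z,\bxi,\bzeta/\eps,\Omega)$, with $A$ the solution of (\ref{def:A:2}) started from $(2\pi)^4\phi^1_{\sqrt{2}\rho_0}(\bxi)$. Introducing, as in the proof of Proposition~\ref{prop:1}, $\check{M}_1^\eps(z,\bxi,\bzeta)=\widetilde{M}^\eps_1(z/\eps,\bxi,\bzeta)\exp(-i\frac{c_o\Omega}{\omega_0^2}(\frac14|\bzeta|^2+|\bxi|^2)z)$ and $\check{A}^\eps(z,\bxi,\bzeta)=A(z,\bxi,\bzeta/\eps,\Omega)\exp(-i\frac{c_o\Omega}{\omega_0^2}|\bxi|^2z)$, the function $\check{M}_1^\eps$ solves $\partial_z\check{M}_1^\eps=\check{\cal L}^\eps\check{M}_1^\eps$ with the same operator $\check{\cal L}^\eps$ (of $L^1\to L^1$ norm at most $\frac{\omega_0^2}{2c_o^2}C({\bf 0})$) as in that proof; a one-line computation using $|\bxi-\bk|^2-|\bxi|^2=|\bk|^2-2\bxi\cdot\bk$ shows that $\check{A}^\eps$ solves the same equation, hence so does $\check{N}^\eps:=\phi^\eps_{r_0/\sqrt{2}}(\bzeta)\check{A}^\eps$ because the $\bxi$-independent prefactor commutes with $\check{\cal L}^\eps$. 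Since $A(z=0,\bxi,\bzeta,\Omega)=(2\pi)^4\phi^1_{\sqrt{2}\rho_0}(\bxi)$, the functions $\check{M}_1^\eps$ and $\check{N}^\eps$ have the same initial datum, so, up to the $O(\eps)$ terms discarded from (\ref{eq:tildeN1eps}) — which produce a source $\check{S}^\eps$ of $L^1$-norm $O(\eps)$, absorbed by Gronwall's lemma exactly as in Proposition~\ref{prop:1} — the difference $\check{R}^\eps=\check{M}_1^\eps-\check{N}^\eps$ satisfies $\sup_{z\in[0,Z]}\|\check{R}^\eps(z,\cdot,\cdot)\|_{L^1}\to0$. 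As in Proposition~\ref{prop:1} one also uses the a priori bound $\sup_{z\in[0,Z],\,\bzeta\in\RR^2}\|\check{A}^\eps(z,\cdot,\bzeta)\|_{L^1}=\sup_{z\in[0,Z],\,\boldsymbol\sigma\in\RR^2}\|A(z,\cdot,\boldsymbol\sigma,\Omega)\|_{L^1}<\infty$, obtained from Bochner's theorem and Gronwall's lemma applied to (\ref{def:A:2}).

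Finally, I would undo the phases. Writing $R^\eps_1(z,\bxi,\bzeta)=\widetilde{M}^\eps_1(z/\eps,\bxi,\bzeta)-\phi^\eps_{r_0/\sqrt{2}}(\bzeta)A(z,\bxi,\bzeta/\eps,\Omega)$ in terms of $\check{R}^\eps$ and $\check{N}^\eps$ gives
$$
R^\eps_1=e^{i\frac{c_o\Omega}{\omega_0^2}|\bxi|^2z}\Big[\check{R}^\eps\,e^{i\frac{c_o\Omega}{4\omega_0^2}|\bzeta|^2z}+\phi^\eps_{r_0/\sqrt{2}}(\bzeta)\check{A}^\eps\big(e^{i\frac{c_o\Omega}{4\omega_0^2}|\bzeta|^2z}-1\big)\Big].
$$
The $L^1$-norm of the first term is at most $\|\check{R}^\eps(z,\cdot,\cdot)\|_{L^1}\to0$; for the second, the elementary bound $|e^{i\theta}-1|\le|\theta|$ together with the a priori bound on $\check{A}^\eps$ gives the estimate $\frac{c_o|\Omega|Z}{4\omega_0^2}\big(\sup_{z,\bzeta}\|\check{A}^\eps(z,\cdot,\bzeta)\|_{L^1}\big)\int_{\RR^2}\phi^\eps_{r_0/\sqrt{2}}(\bzeta)|\bzeta|^2\,d\bzeta=O(\eps^2)$, since $\phi^\eps_{r_0/\sqrt{2}}$ concentrates at scale $\eps$. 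Hence $\sup_{z\in[0,Z]}\|R^\eps_1(z,\cdot,\cdot)\|_{L^1}\to0$, which is the claim. The one point that requires genuine care is the first one — computing the new initial datum correctly and noticing that (\ref{eq:tildeN1eps}) decouples in $\bzeta$ so that the factorized ansatz is propagated without error; the residual analysis is in fact softer than in Proposition~\ref{prop:1}, precisely because with an order-one element size there is no coherent/incoherent splitting and hence no $K(z)$-term and no source term in the equation for $A$.
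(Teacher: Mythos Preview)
Your proposal is correct and follows the same approach the paper intends: the paper gives no separate proof of Proposition~\ref{prop:1:2} but simply says the analysis of Proposition~\ref{prop:1} can be revisited in the regime (\ref{sca:sci2}), and you have carried this out accurately, correctly identifying that the only change is the initial datum $\widetilde{M}^\eps_1(0,\bxi,\bzeta)=(2\pi)^4\phi^1_{\sqrt{2}\rho_0}(\bxi)\phi^\eps_{r_0/\sqrt{2}}(\bzeta)$ and that, as a consequence, $A$ now absorbs the full $\bxi$-profile with no source term and no separate $K(z)$-piece. Your remark that the residual analysis is softer here is apt: taking (\ref{eq:tildeN1eps}) as the evolution equation, $\check{N}^\eps$ solves it exactly with the same initial data as $\check{M}_1^\eps$, so $\check{R}^\eps\equiv 0$ at that level and the entire residual $R_1^\eps$ comes from the $|\bzeta|^2$-phase mismatch, which you bound correctly by $O(\eps^2)$.
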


In particular, we have
\begin{equation}
A(z,\bxi,\bzeta,0) = (2\pi)^4 \int_{\RR^2} \exp\Big( - \frac{|\bx|^2}{4\rho_0^2} +\frac{\omega_0^2}{4c_o^2}
\int_0^z C\big(\bx+\frac{c_o \bzeta z'}{\omega_0}\big)-C({\bf 0}) dz' - i \bxi\cdot\bx\Big) d\bx.
\end{equation} 

\begin{proposition}
\label{prop:3:2}%
In the scintillation regime (\ref{sca:sci2}),
the function $\widetilde{M}^\eps_2 (z/\eps,\bxi_1,\bxi_2, \bzeta_1,\bzeta_2 ) $ defined by (\ref{eq:renormhatM2})  can be expanded as
\begin{align}
\nonumber
 \widetilde{M}^\eps_2\big( \frac{z}{\eps} ,\bxi_1,\bxi_2,  \bzeta_1,\bzeta_2 \big)  =&
\frac{1}{4}
\phi^\eps_{r_0} ( \bzeta_1 )\phi^\eps_{r_0} (\bzeta_2) 
A \big( z, \frac{\bxi_2+\bxi_1}{2},   \frac{\bzeta_2+ \bzeta_1}{\eps} ,\Omega_2+\Omega_3 \big) \\
\nonumber
& \hspace*{0.8in} \times
A \big( z, \frac{\bxi_2-\bxi_1}{2},   \frac{\bzeta_2- \bzeta_1}{\eps} ,\Omega_2-\Omega_3 \big) \\
& + R^\eps_2  (z ,\bxi_1,\bxi_2 ,  \bzeta_1 ,\bzeta_2 )   ,
\label{eq:propsci11:2}
\end{align}
where 
the function $(z,\bxi)\mapsto A(z,\bxi,\bzeta,\Omega)$ is the solution of (\ref{def:A:2}),
and the function $R^\eps_2$ satisfies
\begin{equation}
\sup_{z \in [0,Z]} \| R^\eps_2 (z,\cdot,\cdot,\cdot,\cdot) \|_{L^1(\RR^2\times \RR^2\times \RR^2\times \RR^2)} 
\stackrel{\eps \to 0}{\longrightarrow}  0  ,
\end{equation}
for any $Z>0$.
\end{proposition}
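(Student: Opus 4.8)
The proof runs along the lines of the proof of Proposition~\ref{prop:1}, now applied to the fourth-order transport equation (\ref{eq:tildeNeps}), which still holds verbatim in the regime (\ref{sca:sci2}): only the initial datum (\ref{eq:initialtildeM2eps}) changes, because $\rho_0^\eps=\rho_0$, so that the factors $\phi^\eps_{\rho_0}(\bxi_i)$, which concentrate at the origin on the scale $\eps$ when $\rho_0^\eps=\rho_0/\eps$, become fixed Gaussians of order-one width. A short Fourier computation then gives $\widetilde M^\eps_2(z=0)=(2\pi)^8\phi^1_{\rho_0}(\bxi_1)\phi^1_{\rho_0}(\bxi_2)\phi^\eps_{r_0}(\bzeta_1)\phi^\eps_{r_0}(\bzeta_2)=\tfrac14\phi^\eps_{r_0}(\bzeta_1)\phi^\eps_{r_0}(\bzeta_2)\,A(0,\bxi_+,\cdot,\cdot)A(0,\bxi_-,\cdot,\cdot)$, where $\bxi_\pm=(\bxi_2\pm\bxi_1)/2$ and $A$ solves (\ref{def:A:2}); hence there is no Dirac concentration in $\bxi$, the coherent contributions ($K$- and $K^2$-terms of (\ref{eq:propsci11})) have no counterpart, and the ``swapped'' incoherent-incoherent pairing, which in the regime (\ref{sca:sci}) survives only because an $\bxi$-type variable is of size $\eps$, no longer does: only the straight product $A(\bxi_+)A(\bxi_-)$ remains.

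I would strip the $\bxi$-quadratic $\Omega$-phases from $\widetilde M^\eps_2$, forming $\check M^\eps_2$ as in the proof of Proposition~\ref{prop:1}, and pass to the variables $\bxi_\pm$. Two algebraic facts close the ansatz: first, $\tfrac{\Omega_2}{2}(|\bxi_1|^2+|\bxi_2|^2)+\Omega_3(\bxi_1\cdot\bxi_2)=(\Omega_2+\Omega_3)|\bxi_+|^2+(\Omega_2-\Omega_3)|\bxi_-|^2$, so the $\bxi$-phases of (\ref{eq:tildeNeps}) split and match (\ref{def:A:2}) with $\Omega=\Omega_2\pm\Omega_3$; second, among the six scattering kernels of (\ref{eq:tildeNeps}), exactly two carry the slow phase $e^{i(c_oz/\eps\omega_0)\bk\cdot(\bzeta_2\pm\bzeta_1)}$ (slow because $\phi^\eps_{r_0}(\bzeta_i)$ forces $\bzeta_i=O(\eps)$), and these, acting on a product $A(\bxi_+)A(\bxi_-)$, shift only $\bxi_+$ by $\bk$ (resp. only $\bxi_-$ by $\bk$), so that together with the $-2$ term split evenly they reproduce, factor by factor, equation (\ref{def:A:2}) for $A$ with frozen parameter $(\bzeta_2\pm\bzeta_1)/\eps$. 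Setting $\check N^\eps$ equal to the right-hand side of (\ref{eq:propsci11:2}) without $R^\eps_2$ and with the $\bxi$-phases stripped, and $\check R^\eps=\check M^\eps_2-\check N^\eps$, one obtains $\partial_z\check R^\eps=\check{\cal L}^\eps\check R^\eps+\check S^\eps$ with $\check{\cal L}^\eps$ the phase-dressed scattering operator (bounded on $L^1$ by $2\omega_0^2 C({\bf 0})/c_o^2$), $\check R^\eps(z=0)=0$ by the previous paragraph, and $\check S^\eps$ equal to the four remaining scattering kernels applied to $\check N^\eps$, up to an $O(\eps)$ error from the $\bzeta$-dependent pieces of the $\Omega$-phases.

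The main obstacle is that these four kernels applied to $\check N^\eps$ do not vanish in $L^1$ at fixed $z$: since $\bxi_2$ is of order one, $\phi^\eps_{r_0}(\bzeta_2)$ forces the integration variable $\bk$ to within $O(\eps)$ of the order-one variable $\bzeta_2$, so each such term is an order-one $L^1$-function carried by the genuinely fast phase $e^{i(c_oz/\eps\omega_0)\psi}$ with $\psi=\bzeta_2\cdot\bxi_2$ to leading order --- precisely the kernels that in the regime (\ref{sca:sci}) generated the coherent and swapped terms of (\ref{eq:propsci11}). One therefore cannot close via $\|\check S^\eps\|_{L^1}\to0$ as in Proposition~\ref{prop:1}; instead I would integrate $\partial_z\check R^\eps=\check{\cal L}^\eps\check R^\eps+\check S^\eps$ over $z$ and estimate $\sup_{z\in[0,Z]}\|\int_0^z\check S^\eps(z',\cdot)\,dz'\|_{L^1}$. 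On $\{|\psi|\le\sqrt\eps\}$, a set of measure $O(\sqrt\eps)$ on which $\check S^\eps$ is only $O(1)$, use the crude bound; on its complement an integration by parts in $z'$ (the slow factors have $O(1)$ $z'$-derivatives) produces a factor $O(\eps/|\psi|)$ whose $L^1$-norm is $O(\eps|\log\eps|)$, using $\hat C\in L^1$ and the integrability of $\hat C(\bk)/|\bk|$ (automatic in two dimensions). Both terms tend to zero, so Gronwall's lemma gives $\sup_{z\in[0,Z]}\|\check R^\eps(z,\cdot)\|_{L^1}\to0$.

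Finally, undoing the phase renormalization --- the left-over factors $\exp(i(c_o/\omega_0^2)[\,\cdots\,]z)-1$ coming from the $\bzeta$-quadratic and $\bxi\cdot\bzeta$ pieces of the $\Omega$-phases are $O(\eps)$ or $O(\eps^2)$ on the support of $\phi^\eps_{r_0}$ and against the $L^1$-bounded $A$-factors, exactly as in the closing step of the proof of Proposition~\ref{prop:1} --- yields $\sup_{z\in[0,Z]}\|R^\eps_2(z,\cdot,\cdot,\cdot,\cdot)\|_{L^1}\to0$. The hard part is thus the estimate of the previous paragraph: separating the two ``diagonal'' kernels that feed the two copies of the $A$-dynamics from the four off-diagonal ones, and showing that the latter --- although order one in $L^1$ at fixed $z$ --- disappear after the $z$-integration by the non-stationarity of their phase; the rest is the bookkeeping of Proposition~\ref{prop:1}, lightened here by the absence of any coherent component and of the swapped pairing, which is why (\ref{eq:propsci11:2}) has a single term.
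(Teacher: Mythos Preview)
The paper does not give an explicit proof of Proposition~\ref{prop:3:2}; it simply says that ``the previous analysis can be revisited in the revised scintillation regime (\ref{sca:sci2})'' and then states the result, in parallel with the way Proposition~\ref{prop:3} is stated without a self-contained proof (the latter being referred to \cite{garniers4}). Your proposal therefore supplies considerably more detail than the paper does, and as far as I can check it is correct: the identification of the initial datum with $\tfrac14\phi^\eps_{r_0}(\bzeta_1)\phi^\eps_{r_0}(\bzeta_2)A(0,\bxi_+)A(0,\bxi_-)$, the decoupling of the $\Omega$-phases in the $\bxi_\pm$-variables, and the fact that exactly the two kernels with phase $\bk\cdot(\bzeta_2\pm\bzeta_1)$ reproduce the two copies of (\ref{def:A:2}) are all verified by direct computation.

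The one place where your argument genuinely departs from a naive transcription of the proof of Proposition~\ref{prop:1} is the handling of the four remaining kernels. You are right that in the regime (\ref{sca:sci2}) these terms, applied to the single-product ansatz, are \emph{not} $o(1)$ in $L^1$ at fixed $z$: the shift $\bzeta_2\to\bzeta_2-\bk$ pushes the support of $\phi^\eps_{r_0}(\bzeta_2-\bk)$ to $\bzeta_2\approx\bk=O(1)$, and the resulting $L^1$-mass is order one. This is a point the paper passes over in silence. Your remedy --- estimate $\sup_{z\le Z}\|\int_0^z\check S^\eps\,dz'\|_{L^1}$ instead, split on $\{|\psi|\le\sqrt\eps\}$, and integrate by parts in $z'$ on the complement --- is the natural one and closes via the Gronwall variant you describe (set $G^\eps=\check R^\eps-\int_0^\cdot\check S^\eps$ to reduce to the standard inequality). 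An alternative in the spirit of Proposition~\ref{prop:3} would be to enlarge the ansatz by the ``swapped'' product with $A$-arguments $(\bxi_2\pm\bzeta_1)/\eps$, so that the source term becomes small pointwise in $z$, and then argue separately that these extra terms are themselves $o(1)$; your route is cleaner here because it avoids having to control $A$ at $|\bzeta|\sim\eps^{-1}$.
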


In the strongly scattering regime $L\gg \ell_{\rm sca}$, Proposition \ref{prop:1b} is still valid except that
the initial condition for $ {A}_{\rm s}$
 is $ {A}_{\rm s}(z=0,\bxi,\bzeta,\Omega) = (2\pi)^4 \phi^1_{\sqrt{2}\rho_0}(\bxi)$
instead of $ {A}_{\rm s}(z=0,\bxi,\bzeta,\Omega) = (2\pi)^4 \delta(\bxi)$.
As a result, the expression of $\hat{A}_{\rm s}$ given in Proposition \ref{prop:4} has to be updated.
The updated result is given in the following proposition.

\begin{proposition}
\label{prop:4:2}
The partial inverse Fourier transform $\hat{A}_{\rm s}(z,\bx,\bzeta,\Omega) $ has the form
(\ref{def:hatA}) where $(a_\Omega,b_\Omega,c_\Omega,d_\Omega)$ are given by 
(\ref{def:aOmega}-\ref{def:dOmega}) and the functions $\Psi_{a,b,c,d}$ are defined by
\begin{eqnarray}
\label{def:Psia:2}
\Psi_a(s) &=& \ln\Big[\cosh\big( e^{- i \frac{\pi}{4}}s \big)+T_0 \sinh\big( e^{- i \frac{\pi}{4}}s \big)\Big],\\
\Psi_b(s)&=&\frac{T_0+ \tanh(e^{- i \frac{\pi}{4}}s)}{e^{- i \frac{\pi}{4}}s[ 1 + T_0 \tanh(e^{- i \frac{\pi}{4}}s)]},\\
\Psi_c(s) &=& 2i \frac{e^{- i \frac{\pi}{4}}s
\frac{T_0+\tanh(e^{- i \frac{\pi}{4}}s)}{1+ T_0\tanh(e^{- i \frac{\pi}{4}}s)}-1 +[ \cosh(e^{- i \frac{\pi}{4}}s)+T_0 \sinh( e^{- i \frac{\pi}{4}}s )]^{-1}}{ s ^2} ,\\
\nonumber
\Psi_d(s) &=& 1 - \frac{3i}{s^3} \int_0^s \Big( e^{- i \frac{\pi}{4}}s'\frac{T_0+\tanh(e^{- i \frac{\pi}{4}}s')}{1+ T_0\tanh(e^{- i \frac{\pi}{4}}s')} -1\\
&& \hspace*{0.6in}  +[ \cosh(e^{- i \frac{\pi}{4}}s')+T_0 \sinh( e^{- i \frac{\pi}{4}}s' )]^{-1}\Big)^2 ds' ,
\label{def:Psid:2}
\end{eqnarray}
with
\begin{equation}
T_0 =  \frac{2 e^{- i \frac{\pi}{4}} }{\omega_0^2 \rho_0^2} \sqrt{\frac{c_o^3 \Omega}{D}}   .
\end{equation}
\end{proposition}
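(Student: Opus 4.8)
\noindent
The plan is to follow the proof of Proposition~\ref{prop:4} step by step, the only change being the initial condition carried by $\hat{A}_{\rm s}$, which is itself dictated by the modified initial condition for $A$ in the revisited regime (\ref{sca:sci2}).

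First I would invoke the revised form of Proposition~\ref{prop:1b} recalled just above: for $L\gg\ell_{\rm sca}$ the function $A$ solving (\ref{def:A:2}) is approximated by the solution $A_{\rm s}$ of the parabolic equation (\ref{eq:tildeA}), now started from $A_{\rm s}(z=0,\bxi,\bzeta,\Omega)=(2\pi)^4\phi^1_{\sqrt2\rho_0}(\bxi)$. Passing to the partial inverse Fourier transform and using the elementary Gaussian identity $\frac{1}{(2\pi)^2}\int_{\RR^2}(2\pi)^4\phi^1_{\sqrt2\rho_0}(\bxi)\,e^{i\bxi\cdot\bx}\,d\bxi=(2\pi)^2\exp\big(-|\bx|^2/(4\rho_0^2)\big)$, I would conclude that $\hat{A}_{\rm s}$ solves the very same first-order partial differential equation as in the proof of Proposition~\ref{prop:4}, but starting from $\hat{A}_{\rm s}(z=0,\bx,\bzeta,\Omega)=(2\pi)^2\exp\big(-|\bx|^2/(4\rho_0^2)\big)$ instead of $(2\pi)^2$.

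Next, inserting the Gaussian ansatz (\ref{def:hatA}) into this equation yields exactly the same coupled system of ordinary differential equations (\ref{eq:odeaOmega1})--(\ref{eq:odeaOmega4}) for $(a_\Omega,b_\Omega,c_\Omega,d_\Omega)$; the sole difference is in the initial data, which now read $(a_\Omega,b_\Omega,c_\Omega,d_\Omega)(z=0)=(0,\,1/(4\rho_0^2),\,0,\,0)$. I would then solve the system hierarchically. The Riccati equation for $b_\Omega$ is linearized by the substitution $b_\Omega=-u'/(q\,u)$ with $q=4ic_o\Omega/\omega_0^2$, which turns it into $u''=-i\sigma^2 u$ with $\sigma:=\sqrt{D\Omega/(4c_o)}$, so that $u(z)=c_1\cosh(e^{-i\pi/4}\sigma z)+c_2\sinh(e^{-i\pi/4}\sigma z)$. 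The new boundary value $b_\Omega(0)=1/(4\rho_0^2)$ fixes the ratio $c_2/c_1$, and a short computation shows it equals precisely the constant $T_0$ of the statement; this is the only point at which the element size $\rho_0$ enters. Normalizing $c_1=1$ gives $u(z)=\cosh(e^{-i\pi/4}\sigma z)+T_0\sinh(e^{-i\pi/4}\sigma z)$, which produces $\Psi_b$ and, via $a_\Omega'=-q b_\Omega=u'/u$, gives $a_\Omega=\ln u$, i.e. $\Psi_a$. The equation for $c_\Omega$ is linear and is solved with $u$ as integrating factor: using $qb_\Omega=-u'/u$ it becomes $(u\,c_\Omega)'=\frac{\omega_0 D}{8c_o}\,z\,u$, which after integration by parts collapses to a closed form in $u$, $u'$ and $z$, hence to $\Psi_c$. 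Finally $d_\Omega$ follows from one quadrature of $\frac{D}{16}z^2+i\frac{c_o\Omega}{\omega_0^2}c_\Omega^2$: the first term contributes the leading $Dz^3/48$, and the second contributes the integral appearing in $\Psi_d$, which does not reduce to elementary functions and is therefore left in integral form. The symmetry $(a_{-\Omega},b_{-\Omega},c_{-\Omega},d_{-\Omega})=(\overline{a_\Omega},\overline{b_\Omega},\overline{c_\Omega},\overline{d_\Omega})$ follows by conjugating the ODE system together with the real initial data, exactly as in Proposition~\ref{prop:4}.

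No new analytic ingredient is needed beyond Propositions~\ref{prop:1b} and~\ref{prop:4}: the convergence and well-posedness questions are already handled by the revisited Proposition~\ref{prop:1b}, and what remains is purely algebraic. Accordingly, the only genuine obstacle is bookkeeping: carrying the extra $T_0\sinh$ terms through the integrating-factor solution for $c_\Omega$ and through the quadrature for $d_\Omega$, and verifying that the ratio $c_2/c_1$ forced by $b_\Omega(0)=1/(4\rho_0^2)$ is exactly $T_0=\frac{2e^{-i\pi/4}}{\omega_0^2\rho_0^2}\sqrt{c_o^3\Omega/D}$, so that the resulting closed forms collapse to the $\Psi_{a,b,c,d}$ stated above.
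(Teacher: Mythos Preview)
Your proposal is correct and follows essentially the same route as the paper: reduce to the same ODE system (\ref{eq:odeaOmega1})--(\ref{eq:odeaOmega4}) with the single change $b_\Omega(0)=1/(4\rho_0^2)$, then solve. In fact your write-up is considerably more detailed than the paper's own proof, which simply states the new initial condition and asserts that solving the system yields the claimed $\Psi_{a,b,c,d}$; your explicit linearization of the Riccati equation and verification that the ratio $c_2/c_1$ forced by $b_\Omega(0)=1/(4\rho_0^2)$ equals $T_0$ are welcome additions that the paper omits.
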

When $\rho_0 \to+\infty$, we have $T_0=0$ and we recover the result of Proposition \ref{prop:4}.

\begin{proof}
$\hat{A}_{\rm s}$ is given by (\ref{def:hatA})
and the functions $(a_\Omega,b_\Omega,c_\Omega,d_\Omega)$ satisfy the system of differential equations 
(\ref{eq:odeaOmega1}-\ref{eq:odeaOmega4}),
with the initial condition $b_\Omega(0)=1/(4\rho_0^2)$ instead of $b_\Omega(0)=0$.
By solving the differential equations we get the desired result.
\end{proof}

We remark that when $\Omega=0$,  we have
\begin{equation}
a_0(z) = 0  ,\quad
b_0(z) =  \frac{\omega_0^2 D z}{16 c_o^2} +\frac{1}{4\rho_0^2},
\quad
c_o(z) = \frac{\omega_0 D z^2}{16 c_o} ,
\quad
d_0(z)=  \frac{D z^3}{48} 
.
\end{equation}
%When $D \Omega z^2 / c_o \gg 1$

\section{Time-Harmonic Wave Refocusing}
\label{sec:refocusth}
We address the situation described in Section \ref{sec:1th} in the scintillation regime  (\ref{sca:sci}).
We consider two nearby frequencies $\omega=\omega_0+\eps \Omega$ 
and $\tilde{\omega}=\omega_0-\eps \Omega$.
The goal is to determine the profile of the refocused wave and its signal-to-noise ratio.
We also want to determine for which frequency offset $\Omega$ time-reversal refocusing is still effective.

\subsection{The Mean Refocused Wave}
We first give the general expression of the mean refocused field in the scintillation regime.
\begin{proposition}
In the scintillation regime  (\ref{sca:sci}) the mean refocused field is
\begin{eqnarray*}
&& \EE\big[ \hat{u}_{\rm tr} \big(\frac{\by}{\eps}+\bx;\frac{\by}{\eps}\big)\big]
\stackrel{\eps\to 0}{\longrightarrow} \frac{K(L)}{(2\pi)^4} \int
% \phi_{\sqrt{2}\rho_0}^1(\bxi) \phi_{r_0/\sqrt{2}}^1(\bzeta)  
\frac{r_0^2 }{4\pi}
\exp\Big(  -\frac{r_0^2 |\bzeta|^2}{4} 
+ i \by\cdot\bzeta \Big)d\bzeta \\
&&+ \frac{1}{(2\pi)^4} \iint_{\RR^2\times \RR^2}
\frac{r_0^2 }{4\pi}
\exp\Big( - \frac{r_0^2 |\bzeta|^2}{4} +i \bx\cdot\bxi + i \by \cdot \bzeta
 - i \frac{L c_o }{\omega_0} \bxi \cdot \bzeta  \Big)
%\phi_{r_0/\sqrt{2}}(\bzeta) 
A(L,\bxi,\bzeta,\Omega) 
 d\bxi d\bzeta. 
 \end{eqnarray*}
 \end{proposition}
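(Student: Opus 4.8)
The plan is to identify the mean refocused field with the renormalized second-order moment $\widetilde{M}^\eps_1$ evaluated at propagation distance $L/\eps$, to substitute the expansion of Proposition \ref{prop:1}, and then to pass to the limit $\eps\to0$ term by term. The whole argument is a Fourier-space computation built on (\ref{def:R1eps}).

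First, by linearity of the expectation, $\EE[\hat{u}_{\rm tr}(\bx;\by)]$ is the second-order moment $M_1(L,\bx,\by)$ of (\ref{def:mom1}); in the scintillation regime (\ref{sca:sci}) this is $M_1^\eps(L/\eps,\bq,\br)$ with $\bq=\bx-\by$ and $\br=(\bx+\by)/2$. Taking the observation point $\by/\eps+\bx$ and the source point $\by/\eps$ gives $\bq=\bx$ and $\br=\by/\eps+\bx/2$, so that $\EE[\hat{u}_{\rm tr}(\by/\eps+\bx;\by/\eps)]=M_1^\eps(L/\eps,\bx,\by/\eps+\bx/2)$. Inverting the spatial Fourier transform defining $\hat{M}^\eps_1$ and using the renormalization (\ref{eq:renormhatM1}) with $z=L$, this equals
\[
\frac{1}{(2\pi)^4}\iint_{\RR^2\times\RR^2}\widetilde{M}^\eps_1\Big(\frac{L}{\eps},\bxi,\bzeta\Big)\exp\Big(i\bx\cdot\bxi+i\frac{\by}{\eps}\cdot\bzeta+i\frac{\bx}{2}\cdot\bzeta-\frac{ic_oL}{\omega_0\eps}\bxi\cdot\bzeta\Big)\,d\bxi\,d\bzeta.
\]

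Next I would insert the expansion (\ref{def:R1eps}) of $\widetilde{M}^\eps_1$ and treat its three contributions separately. The residual term is bounded in modulus by $\|R^\eps_1(L,\cdot,\cdot)\|_{L^1}$, hence vanishes as $\eps\to0$ by Proposition \ref{prop:1}. For the first two terms the key step is the change of variables $\bzeta=\eps\bzeta'$, which removes the singular phase: one checks $\phi^\eps_{r_0/\sqrt2}(\eps\bzeta')\,\eps^2=\frac{r_0^2}{4\pi}\exp(-\frac{r_0^2}{4}|\bzeta'|^2)$, while $\frac{\by}{\eps}\cdot\bzeta\to\by\cdot\bzeta'$, $\frac{c_oL}{\omega_0\eps}\bxi\cdot\bzeta\to\frac{c_oL}{\omega_0}\bxi\cdot\bzeta'$, $\frac{\bx}{2}\cdot\bzeta=\frac{\eps}{2}\bx\cdot\bzeta'\to0$, and $A(L,\bxi,\bzeta/\eps,\Omega)=A(L,\bxi,\bzeta',\Omega)$. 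In the first term the factor $\phi^\eps_{\sqrt2\rho_0}(\bxi)$ is an approximate identity concentrating at $\bxi=0$; evaluating the remaining factor (continuous and bounded in $\bxi$, Gaussian-integrable in $\bzeta'$) at $\bxi=0$ and integrating out $\bzeta'$ produces the term $\frac{K(L)}{(2\pi)^4}\int\frac{r_0^2}{4\pi}\exp(-\frac{r_0^2}{4}|\bzeta|^2+i\by\cdot\bzeta)\,d\bzeta$ of the claim. In the second term there is no concentration; pointwise convergence of the integrand together with the Gaussian weight in $\bzeta'$ and the bound $\sup_{z\in[0,Z],\bzeta}\|A(z,\cdot,\bzeta)\|_{L^1}<\infty$ (established in the proof of Proposition \ref{prop:1}) yields, by dominated convergence and after renaming $\bzeta'$ to $\bzeta$, the second term of the claim.

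The step demanding the most care is the interchange of the limit $\eps\to0$ with the integrals. In the first term the mollifier $\phi^\eps_{\sqrt2\rho_0}$ is applied to an integrand that still depends on $\eps$ (through $\exp(\frac{i\eps}{2}\bx\cdot\bzeta')$ and the oscillatory exponentials), so one needs to combine a standard mollifier estimate in $\bxi$ with a dominated-convergence argument in $\bzeta'$ using the Gaussian majorant and the uniform bound $|\int\phi^\eps_{\sqrt2\rho_0}(\bxi)(\cdots)\,d\bxi|\le1$; in the second term the $L^1$ control on $A$ must be used uniformly in the frozen parameter $\bzeta$. Once these two interchanges are justified, summing the three contributions gives exactly the stated expression.
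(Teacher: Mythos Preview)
Your proposal is correct and follows essentially the same approach as the paper: identify $\EE[\hat u_{\rm tr}(\by/\eps+\bx;\by/\eps)]$ with $M_1^\eps(L/\eps,\bq=\bx,\br=\by/\eps+\bx/2)$, express this via the inverse Fourier transform of $\widetilde M_1^\eps$, and pass to the limit using Proposition~\ref{prop:1}. The paper's proof is in fact much terser---it stops after writing the integral representation and simply invokes Proposition~\ref{prop:1}---whereas you have spelled out the change of variables $\bzeta=\eps\bzeta'$, the mollifier argument for the $K(L)$ term, and the dominated-convergence justification for the $A$ term, which are exactly the hidden steps.
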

 \begin{proof}
By using (\ref{def:mom1}) 
and by taking $C \to \eps C$, $r_0 \to r_0/\eps$, $\rho_0 \to \rho_0/\eps$, $\by \to \by/\eps$, $L\to L/\eps$,
the mean refocused wave is given by 
\begin{eqnarray*}
\EE\big[ \hat{u}_{\rm tr} \big(\frac{\by}{\eps}+\bx;\frac{\by}{\eps}\big)\big]
&=& {\cal M}_1 \big(\frac{\by}{\eps}+\bx,\frac{\by}{\eps}\big) \\
&=&
M_1^\eps \big(\frac{L}{\eps},\br=\frac{\by}{\eps}+\frac{\bx}{2},\bq=\bx\big) \\
&=&
\frac{1}{(2\pi)^4} \iint_{\RR^2\times \RR^2}
\widetilde{M}_1^\eps  \big(\frac{L}{\eps},\bxi,\bzeta \big)
 \exp \Big( i \bx\cdot\bxi + i (\frac{\by}{\eps}+\frac{\bx}{2})\cdot \bzeta
 - i \frac{L c_o }{\eps \omega_0} \bxi \cdot \bzeta \Big)
 d\bxi d\bzeta .
\end{eqnarray*}
In the limit $\eps \to 0$, we find from Proposition \ref{prop:1} the desired result.
 \end{proof}
 
In the weakly scattering regime $L \ll \ell_{\rm sca}$  (which is equivalent to $\omega_0^2 C({\bf 0})L/c_o^2 \ll 1$), 
we have $K(L)\simeq (2\pi)^4$ and $A \simeq 0$ so 
\begin{eqnarray*}
&& \EE\big[ \hat{u}_{\rm tr} \big(\frac{\by}{\eps}+\bx;\frac{\by}{\eps}\big)\big]
\stackrel{\eps\to 0}{\longrightarrow} \exp\Big( - \frac{|\by|^2}{r_0^2} \Big),
\end{eqnarray*}
which shows that there is no refocusing. This is because the TRM elements are too large and there is no multipathing effect due
to the random medium. 

In the strongly scattering regime $L \gg \ell_{\rm sca}$  (which is equivalent to $\omega_0^2 C({\bf 0})L/c_o^2 \gg 1$), 
we find by Proposition \ref{prop:4} that
\begin{eqnarray*}
\EE\big[ \hat{u}_{\rm tr} \big(\frac{\by}{\eps}+\bx ; \frac{\by}{\eps}\big)\big]
\stackrel{\eps\to 0}{\longrightarrow} 
\frac{e^{-a_\Omega(L)} r_0^2}{4 \pi} \int_{\RR^2} \exp\big( - e_\Omega(L) |\bzeta|^2 -f_\Omega(L) \bx\cdot \bzeta - b_\Omega(L) |\bx|^2 +i \bzeta \cdot \by \big)d\bzeta ,
\end{eqnarray*}
with
\begin{eqnarray}
\nonumber
e_\Omega(z)&=& \frac{r_0^2}{4} +d_\Omega(z)-\frac{c_o z}{\omega_0} c_\Omega(z) +\frac{c_o^2 z^2}{\omega_0^2} b_\Omega(z) \\
&=&  \frac{r_0^2}{4} + \frac{Dz^3}{48} \big( \Psi_d - 3\Psi_c+3\Psi_b\big) \Big(  \sqrt{\frac{D \Omega}{4 c_o}} z \Big)
\label{def:eOmega}
 ,\\
 \nonumber
f_\Omega(z) &=& c_\Omega(z) -\frac{2c_o z}{\omega_0} b_\Omega(z) \\
&=&  \frac{\omega_0 D z^2}{16 c_o}
\big( \Psi_c- 2 \Psi_b\big) \Big(  \sqrt{\frac{D \Omega}{4 c_o}} z \Big)
 .
 \label{def:fOmega}
\end{eqnarray}
This shows that the mean refocused wave has the form
of a Gaussian peak centered at the target location.
More exactly, 
if we consider the case when $\by={\bf 0}$, then we find that the mean refocused wave is
\begin{eqnarray}
\EE\big[ \hat{u}_{\rm tr} \big(\bx;{\bf 0}\big)\big]
=
\frac{e^{-a_\Omega(L)} r_0^2}{4 e_\Omega(L)} \exp\big( - g_\Omega(L) |\bx|^2 \big)  ,
\end{eqnarray}
with
\begin{eqnarray}
\nonumber
g_\Omega(z) &=& b_\Omega(z) - \frac{f_\Omega(z)^2}{4 e_\Omega(z)}\\
&=& \frac{\omega_0^2 D z}{16 c_o^2}
 \Big( \Psi_b - \frac{\frac{D z^3}{16} (\Psi_c-2\Psi_b)^2}{r_0^2 +\frac{Dz^3}{12} (\Psi_d-3\Psi_c+3\Psi_b)} \Big) \Big(  \sqrt{\frac{D \Omega}{4 c_o}} z \Big) .
\label{def:gOmega}
\end{eqnarray}

When $D \Omega L^2 / c_o \ll 1$, we have
\begin{eqnarray}
\EE\big[ \hat{u}_{\rm tr} \big(\bx;{\bf 0}\big)\big]
\simeq
\frac{1}{1+\frac{DL^3}{12 r_0^2}}  \exp
\Big( - \frac{\omega_0^2 DL}{16 c_o^2}  \frac{1+\frac{DL^3}{48 r_0^2}}{1+\frac{DL^3}{12 r_0^2}}
|\bx|^2 \Big) ,
\label{eq:expressmeanutr}
\end{eqnarray}
which is the expression of the mean refocused wave when $\Omega=0$ \cite{garniers5}, which does not depend on the 
array element size $\rho_0$ (which is too large to ensure refocusing), but strongly depends on the properties of the random medium  
(which is scattering enough to ensure the multipathing effect that gives rise to refocusing). 
 We observe a power-law decay of the mean peak amplitude as a function of the propagation distance.

When $D \Omega L^2 / c_o \gg 1$, we have
\begin{eqnarray}
\EE\big[ \hat{u}_{\rm tr} \big(\bx;{\bf 0}\big)\big]
\simeq
\frac{2 \exp\big( - e^{-i\pi/4} \sqrt{\frac{D \Omega}{4c_o}}L\big)}{1 + i \frac{Lc_o}{\Omega r_0^2}}
 \exp \Big( - e^{i\pi/4} \frac{\omega_0^2}{8 c_o^2} \sqrt{ \frac{Dc_o}{ \Omega}} |\bx|^2 \Big) .
 \label{eq:meanrefocustr2}
\end{eqnarray}
We observe an exponential decay of  the mean peak amplitude, of the form
$$
\exp\Big( -   \sqrt{\frac{D \Omega}{8c_o}}L\Big) ,
$$
while the radius of the mean peak becomes equal to
$$
\frac{c_o}{\omega_0} 2^{7/4} \sqrt[4]{ \frac{ \Omega}{Dc_o}} .
$$
These results (concerning the mean refocused wave) do not depend on the array size $r_0$ or array element size $\rho_0$.
They show that the amplitude of the mean refocused wave is noticeable provided $D \Omega L^2/(8c_o)<1$.
We will see in the next section that the signal-to-noise ratio indeed dramatically decays when this condition is not fulfilled.

\subsection{Signal-to-Noise Ratio Analysis}
We now give the general expression of the second-order moment of the refocused field in the scintillation regime.
\begin{proposition}
In the scintillation regime  (\ref{sca:sci}) the second-order moment of the refocused field is
\begin{eqnarray*}
&&\EE\big[ \big| \hat{u}_{\rm tr}\big(\frac{\by}{\eps}+\bx ; \frac{\by}{\eps}\big) \big|^2\big] 
\stackrel{\eps\to 0}{\longrightarrow} \frac{K(L)^2}{(2\pi)^8} \int_{\RR^2}
%\phi_{\rho_0}^1(\bxi) \phi_{r_0}^1(\bzeta)  
\frac{r_0^2}{2\pi}
\exp\Big(  -\frac{r_0^2 |\bzeta|^2}{2}+ 2 i \by\cdot\bzeta \Big) d\bzeta  \\
&&+ \frac{2K(L)}{(2\pi)^8}{\rm Re} \iint_{\RR^2\times \RR^2} \frac{r_0^2}{4\pi}
\exp\Big( -\frac{r_0^2 |\bzeta|^2}{4} + i \bx\cdot\bxi + i \by \cdot \bzeta -\frac{|\by|^2}{r_0^2}
\Big)\\&& \quad \times
 \exp\Big(  - i \frac{L c_o }{\omega_0} \bxi \cdot \bzeta\Big)
A(L,\bxi,\bzeta,\Omega) 
 d\bxi d\bzeta  \\
 % \int \frac{r_0^2}{4\pi} \exp\Big( -\frac{r_0^2 |\bzeta|^2}{4} - i \by \cdot \bzeta \Big) d\bzeta \\
&&+ \frac{2K(L)}{(2\pi)^8}{\rm Re} \iint_{\RR^2\times \RR^2} \frac{\rho_0^2 r_0^2}{2\pi(\rho_0^2+r_0^2)}
\exp\Big( -\frac{r_0^2\rho_0^2 |\bzeta|^2}{2(r_0^2+\rho_0^2)} +   i \frac{2\rho_0^2}{r_0^2+\rho_0^2} \by \cdot \bzeta
-\frac{2 |\by|^2}{r_0^2+\rho_0^2} \Big)\\&& \quad \times
\exp\Big(  - i \frac{L c_o }{\omega_0} \bxi \cdot \bzeta\Big)A(L,\bxi,\bzeta,0) 
 d\bxi d\bzeta  \\
 &&+\Big| \frac{1}{(2\pi)^4} \iint_{\RR^2\times \RR^2} \frac{r_0^2}{4\pi} \exp\Big( -\frac{r_0^2 |\bzeta|^2}{4} + i \bx\cdot\bxi + i \by \cdot \bzeta  - i \frac{L c_o }{\omega_0} \bxi \cdot \bzeta\Big)
A(L,\bxi,\bzeta,\Omega) 
 d\bxi d\bzeta \Big|^2 \\
 &&+\frac{1}{(2\pi)^8}
\iint_{\RR^2\times \RR^2 \times \RR^2\times \RR^2} \frac{r_0^2 \rho_0^2}{(4\pi)^2} 
\exp\Big( -\frac{r_0^2+\rho_0^2}{8} (|\bzeta_a|^2+|\bzeta_b|^2) +\frac{r_0^2-\rho_0^2}{4} \bzeta_a\cdot \bzeta_b
\Big)\\
&&\quad \times
\exp\Big(  i \by \cdot (\bzeta_a-\bzeta_b) - i \frac{L c_o }{\omega_0}( \bxi_a \cdot \bzeta_a-\bxi_b\cdot \bzeta_b) \Big) 
A(L,\bxi_a,\bzeta_a,0) \overline{ A(L,\bxi_b,\bzeta_b,0)} d\bxi_a d\bxi_b d\bzeta_a d\bzeta_b
.
%&&+ \frac{1}{4(2\pi)^8} \iint \phi_{r_0 }(\bzeta_1)  \phi_{r_0 }(\bzeta_2) 
%A \big( z, \frac{\bxi_2+\bxi_1}{2},  \bzeta_2+ \bzeta_1 , \Omega  \big) \\
%&&\times 
%A \big( z, \frac{\bxi_2-\bxi_1}{2},  \bzeta_2- \bzeta_1 ,-\Omega \big) 
%\exp \Big(  - i \frac{L c_o }{\omega_0} ( \bxi_1\cdot \bzeta_1+\bxi_2\cdot \bzeta_2) +2i \by \cdot \bzeta_1 +i \bx \cdot \bxi_2 \Big)
% d\bxi_1  d\bxi_2 d\bzeta_1 d\bzeta_2 .
 \end{eqnarray*}
\end{proposition}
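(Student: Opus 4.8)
The plan is to reduce this second-order moment of the refocused field to the fourth-order moment $M_2$ of Subsection~\ref{sec:mom4}, insert the multiscale expansion of Proposition~\ref{prop:3}, and pass to the limit $\eps\to 0$ term by term.

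First I would fix the parameters. Comparing the expression of ${\cal M}_2(\bx,\tilde{\bx};\by)$ obtained in Section~\ref{sec:3} with the definition~(\ref{def:mom2}) of $M_2$ shows that $\EE[\hat{u}_{\rm tr}(\bx;\by)\overline{\hat{u}_{\rm tr}(\tilde{\bx};\by)}]$ coincides with $M_2(L,\bx,\by,\by,\tilde{\bx})$ for the frequency assignment $\omega_1=\tilde{\omega}_2=\omega$, $\omega_2=\tilde{\omega}_1=\tilde{\omega}$. Setting $\tilde{\bx}=\bx$, observing at $\by/\eps+\bx$ with the source at $\by/\eps$, and applying the scintillation scaling~(\ref{sca:sci}), the spatial change of variables and the frequency parameterization introduced before Proposition~\ref{prop:3} are then determined: solving the two linear systems yields $\bq_1={\bf 0}$, $\bq_2=\bx$, $\br_1=2\by/\eps+\bx$, $\br_2={\bf 0}$, and $\Omega_1=\Omega_2=0$, $\Omega_3=\Omega$. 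Inverting the Fourier transform in $(\bq_1,\bq_2,\br_1,\br_2)$ and reinstating the fast phase via~(\ref{eq:renormhatM2}) gives
\[
\EE\big[\,\big|\hat{u}_{\rm tr}(\tfrac{\by}{\eps}+\bx;\tfrac{\by}{\eps})\big|^2\big]
=\frac{1}{(2\pi)^8}\iiiint \widetilde{M}^\eps_2\big(\tfrac{L}{\eps},\bxi_1,\bxi_2,\bzeta_1,\bzeta_2\big)\,e^{\,i(\frac{2\by}{\eps}+\bx)\cdot\bzeta_1+i\bx\cdot\bxi_2-\frac{ic_oL}{\omega_0\eps}(\bxi_1\cdot\bzeta_1+\bxi_2\cdot\bzeta_2)}\,d\bxi_1d\bxi_2d\bzeta_1d\bzeta_2 ,
\]
into which I would substitute the expansion~(\ref{eq:propsci11}); with $\Omega_1=\Omega_2=0$, $\Omega_3=\Omega$ it has seven explicit terms plus $R^\eps_2$, and since the kernel above has modulus one the $R^\eps_2$ contribution is bounded by $(2\pi)^{-8}\|R^\eps_2(L,\cdot,\cdot,\cdot,\cdot)\|_{L^1}\to 0$.

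The heart of the argument is the $\eps\to0$ limit of each of the seven terms. In each one I would rescale by $\eps$ the $\bzeta$-type integration variables feeding the Dirac-like factors $\phi^\eps_\rho$ (setting $\bzeta_i=\eps\boldsymbol{\eta}_i$, or the appropriate sums and differences), which converts each such $\phi^\eps_\rho$ into an order-one Gaussian $\frac{\rho^2}{2\pi}e^{-\frac{\rho^2}{2}|\cdot|^2}$ of total mass $1$, or mass $2$ when its argument carries a factor $1/\sqrt{2}$; makes the second arguments $\bzeta/\eps$ of the functions $A$ order one; and turns the $O(\eps^{-1})$ phases into order-one phases, in particular $(2\by/\eps)\cdot\bzeta_1\mapsto 2\by\cdot\boldsymbol{\eta}_1$. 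The remaining $\phi^\eps_\rho$ factors, which carry $\bxi$-type arguments, converge to Dirac masses that pin together pairs of wavevectors. Passing to the limit by dominated convergence — using the uniform-in-$\bzeta$ bound on $\|A(z,\cdot,\bzeta,\Omega)\|_{L^1(\RR^2)}$ from Proposition~\ref{prop:1} together with the Gaussian tails of the $\phi$'s — and then performing the residual Gaussian integrals in the rescaled variables by completing squares, the seven contributions should sort out as follows. The purely Diracal term gives the coherent contribution $\propto K(L)^2$. Terms two and three combine, using $d\bxi_1 d\bxi_2=d(\tfrac{\bxi_1+\bxi_2}{2})\,d(\bxi_1-\bxi_2)$, $\int\phi^\eps_{\rho_0}(\cdot/\sqrt2)\to 2$, and $A(L,\cdot,\cdot,-\Omega)=\overline{A(L,-\,\cdot\,,\cdot,\Omega)}$, into the $2K(L)\,\mathrm{Re}(\cdots)$ term carrying $A(L,\bxi,\bzeta,\Omega)$, whose factor $e^{-|\by|^2/r_0^2}$ comes from completing the square in $\boldsymbol{\eta}$. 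Terms four and five combine — here $\phi^\eps_{R_0}$ identifies $\bxi_1$ with $\pm\bzeta_2$, the combination $\Omega_2\pm\Omega_1=0$ forces $\Omega=0$ inside $A$, and the residual Gaussian now mixes the widths $r_0$ and $\rho_0$ — into the $2K(L)\,\mathrm{Re}(\cdots)$ term with prefactor $\frac{\rho_0^2 r_0^2}{2\pi(r_0^2+\rho_0^2)}$ and exponent $-\frac{r_0^2\rho_0^2}{2(r_0^2+\rho_0^2)}|\bzeta|^2+i\frac{2\rho_0^2}{r_0^2+\rho_0^2}\by\cdot\bzeta-\frac{2|\by|^2}{r_0^2+\rho_0^2}$. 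Term six, in which the two $A$-Gaussians decouple, factorizes into the squared-modulus term. Term seven, whose shared factors $\phi^\eps_{r_0}(\bzeta_1)\phi^\eps_{\rho_0}(\bxi_2)$ leave the two $A$-Gaussians correlated through a $\bzeta_a\cdot\bzeta_b$ cross term, yields the final quadruple integral with kernel $\frac{r_0^2\rho_0^2}{(4\pi)^2}e^{-\frac{r_0^2+\rho_0^2}{8}(|\bzeta_a|^2+|\bzeta_b|^2)+\frac{r_0^2-\rho_0^2}{4}\bzeta_a\cdot\bzeta_b}$ and the factors $A(L,\bxi_a,\bzeta_a,0)\overline{A(L,\bxi_b,\bzeta_b,0)}$. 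Adding the seven contributions gives the claimed identity.

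The main difficulty will be the bookkeeping of the several scales inside each single term: every $\phi^\eps_\rho$ must be handled either as a Dirac mass in some combination of variables or, after $\eps$-rescaling, as a genuine Gaussian that must be retained, while at the same time the functions $A$ carry an $\eps^{-1}$-scaled second argument and the kernel an $\eps^{-1}$ phase; the delicate verification is that these fast phases recombine into exactly the order-one phases appearing in the statement (e.g. $-\tfrac{Lc_o}{\omega_0}(\bxi_a\cdot\bzeta_a-\bxi_b\cdot\bzeta_b)$ in the last term), rather than surviving or causing the term to vanish. Justifying the interchange of limit and integral uniformly in the rescaled variables is the accompanying technical point, handled by the uniform $L^1$ bound on $A$ from Proposition~\ref{prop:1} and dominated convergence; no ideas beyond those already used in the proof of Proposition~\ref{prop:3} are needed.
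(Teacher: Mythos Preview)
Your approach is essentially identical to the paper's: you identify $\EE[|\hat u_{\rm tr}|^2]$ with $M_2^\eps$ at the same values $\br_1=2\by/\eps+\bx$, $\br_2={\bf 0}$, $\bq_1={\bf 0}$, $\bq_2=\bx$, $\Omega_1=\Omega_2=0$, $\Omega_3=\Omega$, write the inverse Fourier representation in terms of $\widetilde M_2^\eps$, and pass to the limit via Proposition~\ref{prop:3}. The paper's proof stops at ``from Proposition~\ref{prop:3} the desired result,'' whereas you spell out the term-by-term mechanics (which of the seven pieces of~(\ref{eq:propsci11}) yield which of the five displayed contributions, the rescalings that turn $\phi^\eps_\rho$ into Dirac masses versus retained Gaussians, the conjugation identity for $A$, and the dominated-convergence justification); this is additional detail rather than a different method.
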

\begin{proof}
The second moment of the refocused wave is given by:
\begin{eqnarray*}
\EE\big[ \big| \hat{u}_{\rm tr}\big(\frac{\by}{\eps}+\bx ; \frac{\by}{\eps}\big) \big|^2\big] 
&=& {\cal M}_2 \big(\frac{\by}{\eps}+\bx,\frac{\by}{\eps}+\bx,\frac{\by}{\eps}\big) \\
&=&
M_2^\eps \big(\frac{L}{\eps},\br_1=2\frac{\by}{\eps}+\bx,\br_2={\bf 0}, \bq_1={\bf 0},\bq_2=\bx\big) \\
&=&
\frac{1}{(2\pi)^8} \iint
\widetilde{M}_2^\eps  \big(\frac{L}{\eps},\bxi_1,\bxi_2,\bzeta_1,\bzeta_2 \big)
 \exp \Big( i \bx\cdot\bxi_2 + i (2 \frac{\by}{\eps}+\bx)\cdot \bzeta_1\Big)\\
 &&\times \exp \Big(
 - i \frac{L c_o }{\eps \omega_0} (\bxi_1 \cdot \bzeta_1+\bxi_2\cdot \bzeta_2) \Big)
 d\bxi_1 d\bxi_2 d\bzeta_1 d\bzeta_2  ,
\end{eqnarray*}
with $\Omega_1=\Omega_2=0$ and $\Omega_3=\Omega$. 
In the limit $\eps \to 0$, we find from Proposition \ref{prop:3} the desired result.
\end{proof}

In the weakly scattering regime $L \ll \ell_{\rm sca}$, 
we find
\begin{eqnarray*}
{\rm Var} \big[ \hat{u}_{\rm tr} \big(\frac{\by}{\eps}+\bx ; \frac{\by}{\eps}\big) \big]
&=& \EE\big[ \big|\hat{u}_{\rm tr} \big(\frac{\by}{\eps}+\bx ; \frac{\by}{\eps}\big) \big|^2\big] 
-
\big| \EE\big[ \hat{u}_{\rm tr} \big(\frac{\by}{\eps}+\bx;\frac{\by}{\eps}\big) \big] \big|^2
\\
&\stackrel{\eps\to 0}{\longrightarrow} &
0,
\end{eqnarray*}
which follows since the scattering is negligible and the propagation approximately as in a homogeneous medium.  

In the strongly scattering regime $L \gg \ell_{\rm sca}$, 
we find by Proposition \ref{prop:4}  that
\begin{eqnarray*}
{\rm Var} \big[ \hat{u}_{\rm tr} \big(\frac{\by}{\eps}+\bx ; \frac{\by}{\eps}\big) \big]
&=& \EE\big[ \big|\hat{u}_{\rm tr} \big(\frac{\by}{\eps}+\bx ; \frac{\by}{\eps}\big) \big|^2\big] 
-
\big| \EE\big[ \hat{u}_{\rm tr} \big(\frac{\by}{\eps}+\bx;\frac{\by}{\eps}\big) \big] \big|^2
\\
&\stackrel{\eps\to 0}{\longrightarrow} &
\frac{r_0^2 \rho_0^2}{(4 \pi)^2} \iint_{\RR^2\times \RR^2}
\exp\Big( -\frac{r_0^2+\rho_0^2}{8} (|\bzeta_a|^2+|\bzeta_b|^2) +\frac{r_0^2-\rho_0^2}{4} \bzeta_a\cdot \bzeta_b\Big) \\
&&
\times
\exp\big( - d_0(L) (|\bzeta_a|^2+|\bzeta_b|^2) +i (\bzeta_a-\bzeta_b) \cdot \by \big)d\bzeta_a d\bzeta_b  ,
\end{eqnarray*}
with
\begin{eqnarray*}
d_0(z)&=& \frac{D z^3}{48}  .
\end{eqnarray*}
Note that the variance does not depend on the frequency offset $\Omega$
and we recover the result known in the case $\Omega=0$ \cite{garniers5},
while we have shown above that the amplitude of the main refocused wave decays as $|\Omega|$ increases.
Therefore the signal-to-noise ratio will increase as $|\Omega|$ increases, as we explain below.

If we consider the case when $\by={\bf 0}$, then we find that the variance of the refocused wave has the form
\begin{eqnarray}
{\rm Var} \big[ \hat{u}_{\rm tr} \big(\bx;{\bf 0}\big) \big]
=\frac{1}{\big(1+ \frac{D L^3}{12 r_0^2}\big)\big(1+\frac{D L^3}{12 \rho_0^2}\big)}.
\end{eqnarray}
The signal-to-noise ratio defined by
\begin{equation}
{\rm SNR} = \frac{
\big| \EE\big[ \hat{u}_{\rm tr} \big({\bf 0} ; {\bf 0}\big) \big] \big|^2
}
{
{\rm Var} \big[ \hat{u}_{\rm tr} \big(\bx; {\bf 0}\big)\big]
}
\end{equation}
is given by
\begin{equation}
{\rm SNR} = 
\frac{e^{-2 {\rm Re}[a_\Omega(L)]} r_0^4}{16 |e_\Omega(L)|^2}   \big(1+ \frac{DL^3}{12r_0^2}\big)
\big(1+\frac{DL^3}{12\rho_0^2}\big) .
\end{equation}
When $D \Omega L^2 / c_o \ll 1$, we have
$$
{\rm SNR} \simeq
\frac{1+\frac{DL^3}{12\rho_0^2}}{1+ \frac{DL^3}{12r_0^2}}  .
$$
This result has already been obtained (when $\Omega=0$) in \cite{garniers5}.
When $DL^3 \gg r_0^2,\rho_0^2$, we find that the SNR varies as $r_0^2/\rho_0^2$, that is to say,
as the number of elements of the TRM.

When $D \Omega L^2 / c_o \gg 1$, we have
\begin{eqnarray*}
{\rm SNR}
\simeq
\frac{2 \exp\big( -  \sqrt{\frac{D \Omega}{2c_o}}L\big)}{1 + \frac{L^2c_o^2}{\Omega^2 r_0^4}}
\big(1+ \frac{D L^3}{12 r_0^2}\big)\big(1+\frac{D L^3}{12 \rho_0^2}\big) ,
\end{eqnarray*}
which is dominated by the exponentially decaying term.

{\bf Conclusion.}
To summarize, refocusing can be achieved provided $D \Omega L^2/(8c_o)<1$,
which is a condition that depends only on the frequency offset $\Omega$, 
the coefficient $D$ or paraxial distance $\ell_{\rm par}=3/D$, and the propagation distance $L$.

\subsection{The Scintillation Regime Revisited}
In the scintillation regime (\ref{sca:sci2}),
where $\rho_0$ is of the same order as the correlation length of the random medium,
we find from Proposition \ref{prop:1:2} that the mean refocused wave is
\begin{eqnarray*}
 \EE\big[ \hat{u}_{\rm tr} \big(\frac{\by}{\eps}+\bx;\frac{\by}{\eps}\big)\big]
\stackrel{\eps\to 0}{\longrightarrow}  \frac{1}{(2\pi)^4} \iint_{\RR^2\times \RR^2} 
\frac{r_0^2 }{4\pi}
\exp\Big( - \frac{r_0^2 |\bzeta|^2}{4} +i \bx\cdot\bxi + i \by \cdot \bzeta
 - i \frac{L c_o }{\omega_0} \bxi \cdot \bzeta  \Big) \\
 \times
A(L,\bxi,\bzeta,\Omega) 
 d\bxi d\bzeta. 
 \end{eqnarray*}
 where $A$ is given by (\ref{def:A:2}).
 
In the weakly scattering regime $L \ll \ell_{\rm sca}$  (which is equivalent to $\omega_0^2 C({\bf 0})L/c_o^2 \ll 1$), 
we have $A(L,\bxi,\bzeta,\Omega) =(2\pi)^4 \phi^1_{\sqrt{2}\rho_0}(\bxi) \exp( i c_o \Omega |\bxi|^2 L / \omega_0^2)$ and therefore
\begin{eqnarray*}
 \EE\big[ \hat{u}_{\rm tr} \big(\frac{\by}{\eps}+\bx;\frac{\by}{\eps}\big)\big]
\stackrel{\eps\to 0}{\longrightarrow} 
\iint_{\RR^2\times \RR^2}
\frac{\rho_0^2 r_0^2 }{4\pi^2}
\exp\Big( - \frac{r_0^2 |\bzeta|^2}{4} - \rho_0^2 |\bxi|^2 +i \bx\cdot\bxi + i \by \cdot \bzeta \\
+i\frac{c_o \Omega L}{\omega_0^2} |\bxi|^2 - i \frac{L c_o }{\omega_0} \bxi \cdot \bzeta  \Big) 
 d\bxi d\bzeta.
 \end{eqnarray*}
If $\by={\bf 0}$, then we get
 \begin{eqnarray*}
 \EE\big[ \hat{u}_{\rm tr} \big( \bx;{\bf 0} \big)\big]
\stackrel{\eps\to 0}{\longrightarrow} 
\frac{1}{1+\frac{c_o^2 L^2}{\omega_0^2 \rho_0^2 r_0^2} - i \frac{c_o \Omega L}{\omega_0^2 \rho_0^2}}
\exp\Big( - \frac{|\bx|^2}{4\rho_0^2 \big( 1+\frac{c_o^2 L^2}{\omega_0^2 \rho_0^2 r_0^2} - i \frac{c_o \Omega L}{\omega_0^2 \rho_0^2}\big) }\Big),
\end{eqnarray*}
which shows that we can get refocusing because the TRM element size is small enough. 
The frequency shift $|\Omega|$ should be smaller than $\omega_0^2 \rho_0^2 / (c_o L) $ 
so that the quality of the refocusing is not affected.

In the strongly scattering regime $L \gg \ell_{\rm sca}$  (which is equivalent to $\omega_0^2 C({\bf 0})L/c_o^2 \gg 1$), 
we find by Proposition \ref{prop:4:2} that
\begin{eqnarray*}
\EE\big[ \hat{u}_{\rm tr} \big(\frac{\by}{\eps}+\bx ; \frac{\by}{\eps}\big)\big]
=
\frac{e^{-a_\Omega(L)} r_0^2}{4 \pi} \int \exp\big( - e_\Omega(L) |\bzeta|^2 -f_\Omega(L) \bx\cdot \bzeta - b_\Omega(L) |\bx|^2 +i \bzeta \cdot \by \big)d\bzeta ,
\end{eqnarray*}
with $a_\Omega$ defined by (\ref{def:aOmega}), $(e_\Omega,f_\Omega)$ defined by (\ref{def:eOmega}-\ref{def:fOmega}),
and $(\Psi_a,\Psi_b,\Psi_c,\Psi_d)$ defined by (\ref{def:Psia:2}-\ref{def:Psid:2}).
More exactly, 
if we consider the case when $\by={\bf 0}$, then we find that the mean refocused wave is
\begin{eqnarray}
\EE\big[ \hat{u}_{\rm tr} \big(\bx;{\bf 0}\big)\big]
=
\frac{e^{-a_\Omega(L)} r_0^2}{4 e_\Omega(L)} \exp\big( - g_\Omega(L) |\bx|^2 \big)  ,
\end{eqnarray}
with $g_\Omega$ defined by (\ref{def:gOmega}).

When $D \Omega L^2 / c_o \ll 1$, we have
\begin{equation}
\EE\big[ \hat{u}_{\rm tr} \big(\bx;{\bf 0}\big)\big]
\simeq
\frac{1}{1+\frac{DL^3}{12 r_0^2} + \frac{c_o^2 L^2}{\omega_0^2 \rho_0^2 r_0^2}}  \exp
\Big( -   \frac{\frac{\omega_0^2 DL}{16 c_o^2}\big(1+\frac{DL^3}{48 r_0^2}\big)
+ \frac{1}{4\rho_0^2} \big( 1+\frac{DL^3}{12 r_0^2}\big)}{1+\frac{DL^3}{12 r_0^2}+ \frac{c_o^2 L^2}{\omega_0^2 \rho_0^2 r_0^2}}
|\bx|^2 \Big) .
\label{eq:meanrefocussf:2}
\end{equation}
We can identify the radius $R$ of the mean refocused wave:
\begin{equation}
R^2 =  \frac{1+\frac{DL^3}{12 r_0^2}+ \frac{c_o^2 L^2}{\omega_0^2 \rho_0^2 r_0^2}}{\frac{\omega_0^2 DL}{8 c_o^2}\big(1+\frac{DL^3}{48 r_0^2}\big)
+ \frac{1}{2\rho_0^2} \big( 1+\frac{DL^3}{12 r_0^2}\big)} .
\label{eq:radiussf:2}
\end{equation}
The radius of the mean refocused wave is smaller when $\rho_0$ is smaller and when the random medium is more scattering (i.e., $D$ is larger).
When $\rho_0$ becomes large, we recover the expression (\ref{eq:expressmeanutr}).

When $D \Omega L^2 / c_o \gg 1$, we get the result (\ref{eq:meanrefocustr2}) and we observe again an exponential
decay of the mean peak amplitude. 

Finally, we find from Proposition \ref{prop:3:2} that 
\begin{equation}
\lim_{\eps \to 0}
\EE\big[ \big| \hat{u}_{\rm tr}\big(\frac{\by}{\eps}+\bx ; \frac{\by}{\eps}\big) \big|^2\big] 
=
\lim_{\eps \to 0}
\big| \EE\big[ \hat{u}_{\rm tr}\big(\frac{\by}{\eps}+\bx ; \frac{\by}{\eps}\big) \big] \big|^2.
\end{equation}
The refocused wave is statistically stable in this regime, because there are many elements (of the order of $\eps^{-2}$) in the TRM.

\section{Time Reversal Stability}
\label{sec:refocusbb}
We address the situation described in Section \ref{sec:tr} in the scintillation regime  (\ref{sca:sci}).
We consider a pulse whose bandwidth is small, of order $\eps$:
$$
\hat{f}^\eps(\omega) = \frac{\sqrt{2\pi}}{\eps B } \exp \Big(- \frac{(\omega-\omega_0)^2}{2 \eps^2 B^2} \Big)  .
$$
The goal is to determine the profile of the refocused wave and its signal-to-noise ratio.
In particular we want to determine for which bandwidth $B$ time-reversal refocusing is statistically stable.

\subsection{The Mean Refocused Wave}
In the limit $\eps \to 0$, we find from (\ref{eq:base:t}) and Proposition \ref{prop:1} that
the mean refocused wave is given by:
\begin{eqnarray*}
&& \EE\big[ {u}_{\rm tr} \big( \frac{t}{\eps} ,\frac{\by}{\eps}+\bx ; \frac{\by}{\eps}\big)\big]\\
&&
\stackrel{\eps\to0}{\longrightarrow} \frac{\exp(-B^2 t^2/2)   e^{-i\omega_0 t/\eps} }{(2\pi)^4} \Big\{
K(L)  \exp\Big( - \frac{|\by|^2}{r_0^2}\Big)  
  \\ 
&&+  \iint_{\RR^2\times \RR^2} 
\frac{r_0^2 }{4\pi}
\exp\Big( - \frac{r_0^2 |\bzeta|^2}{4} +i \bx\cdot\bxi + i \by \cdot \bzeta
 - i \frac{L c_o }{\omega_0} \bxi \cdot \bzeta  \Big)
A(L,\bxi,\bzeta,0) 
 d\bxi d\bzeta \Big\} . 
 \end{eqnarray*}
In the weakly scattering regime $L \ll \ell_{\rm sca}$, we find
\begin{eqnarray*}
\EE\big[ {u}_{\rm tr} \big( \frac{t}{\eps} ,\frac{\by}{\eps}+\bx ; \frac{\by}{\eps}\big)\big]
=     \exp(-B^2 t^2/2)  e^{-i\omega_0 t/\eps} \exp\Big( - \frac{|\by|^2}{r_0^2}\Big) ,
\end{eqnarray*}
which shows that there is not refocusing.

In the strongly scattering regime $L \gg \ell_{\rm sca}$, 
we find by Proposition \ref{prop:4}  that
\begin{eqnarray*}
 \EE\big[ {u}_{\rm tr} \big( \frac{t}{\eps} ,\frac{\by}{\eps}+\bx ; \frac{\by}{\eps}\big)\big]
&=&
\frac{r_0^2 \exp(-B^2 t^2/2)  e^{-i\omega_0 t/\eps} }{4 \pi} \\
&& \times \int_{\RR^2} \exp\big( - e_0(L) |\bzeta|^2 -f_0(L) \bx\cdot \bzeta - b_0(L) |\bx|^2 +i \bzeta \cdot \by \big)d\bzeta ,
\end{eqnarray*}
with
\begin{eqnarray*}
b_0(z) = \frac{\omega_0^2 D z}{16 c_o^2} ,\quad \quad 
e_0(z)  =\frac{r_0^2}{4}  + \frac{Dz^3}{48}  ,\quad \quad 
f_0(z) = -  \frac{\omega_0 D z^2}{16 c_o} .
\end{eqnarray*}
In particular, if $\by={\bf 0}$, we find
\begin{eqnarray}
 \EE\big[ {u}_{\rm tr} \big( \frac{t}{\eps} , \bx ; {\bf 0}\big)\big]
 = \frac{  e^{-i\omega_0 t/\eps} }{1+\frac{DL^3}{12 r_0^2}} \exp \Big( - \frac{B^2 t^2}{2} - \frac{\omega_0^2  DL}{16 c_o^2}  \frac{1+\frac{DL^3}{48 r_0^2}}{1+\frac{DL^3}{12 r_0^2}}
|\bx|^2 \Big) ,
 \end{eqnarray}
 which shows that there is refocusing, with a focal spot radius that is all the smaller as the medium is more scattering.

\subsection{Signal-to-Noise Ratio Analysis}
Let us consider the second moment of the refocused wave. 
In the limit $\eps \to 0$, we find from Proposition \ref{prop:3} that
\begin{eqnarray*}
&&\EE\big[ \big|{u}_{\rm tr}\big(\frac{t}{\eps}, \frac{\by}{\eps}+\bx ; \frac{\by}{\eps}\big) \big|^2\big] \\
&&
\stackrel{\eps\to 0}{\longrightarrow}   \frac{K(L)^2 \exp(-B^2 t^2)}{(2\pi)^8} \int_{\RR^2}
\frac{r_0^2  }{2\pi}
\exp\Big(   -\frac{r_0^2 |\bzeta|^2}{2}+ 2 i \by\cdot\bzeta \Big)  d\bzeta   \\
&&+ \frac{2K(L) \exp(-B^2 t^2)}{(2\pi)^8}{\rm Re} \int_\RR \iint_{\RR^2\times \RR^2} \frac{r_0^2}{4\pi}
\exp\Big( -\frac{r_0^2 |\bzeta|^2}{4} + i \bx\cdot\bxi + i \by \cdot \bzeta   -\frac{|\by|^2}{r_0^2}
\Big)\\&& \quad \times
 \exp\Big(  - i \frac{L c_o }{\omega_0} \bxi \cdot \bzeta\Big)
A(L,\bxi,\bzeta,0) 
 d\bxi d\bzeta d\Omega  \\
&&+ \frac{2K(L)}{(2\pi)^8 \sqrt{\pi} B} {\rm Re}  \int_\RR \iint_{\RR^2\times \RR^2} \frac{\rho_0^2 r_0^2}{2\pi(\rho_0^2+r_0^2)}
\exp\Big( -\frac{r_0^2\rho_0^2 |\bzeta|^2}{2(r_0^2+\rho_0^2)} +   i \frac{2\rho_0^2}{r_0^2+\rho_0^2} \by \cdot \bzeta
-\frac{2 |\by|^2}{r_0^2+\rho_0^2} \Big)\\&& \quad \times
\exp\Big(  - i \frac{L c_o }{\omega_0} \bxi \cdot \bzeta -2i \Omega t -\frac{\Omega^2}{B^2} \Big) A(L,\bxi,\bzeta,\Omega) 
 d\bxi d\bzeta  d\Omega \\
 &&+\Big| \frac{\exp(-B^2 t^2/2)}{(2\pi)^4} \iint_{\RR^2\times \RR^2} \frac{r_0^2}{4\pi} \exp\Big( -\frac{r_0^2 |\bzeta|^2}{4} + i \bx\cdot\bxi + i \by \cdot \bzeta  - i \frac{L c_o }{\omega_0} \bxi \cdot \bzeta\Big)
A(L,\bxi,\bzeta,0) 
 d\bxi d\bzeta \Big|^2 \\
 &&+\frac{1}{(2\pi)^8 \sqrt{\pi} B } \int_\RR
\iint_{\RR^2\times \RR^2 \times \RR^2 \times \RR^2} \frac{r_0^2 \rho_0^2}{(4\pi)^2} 
\exp\Big( -\frac{r_0^2+\rho_0^2}{8} (|\bzeta_a|^2+|\bzeta_b|^2) +\frac{r_0^2-\rho_0^2}{4} \bzeta_a\cdot \bzeta_b
\Big)\\
&&\quad \times
\exp\Big(  i \by \cdot (\bzeta_a-\bzeta_b) - i \frac{L c_o }{\omega_0}( \bxi_a \cdot \bzeta_a-\bxi_b\cdot \bzeta_b) \Big) 
A(L,\bxi_a,\bzeta_a,\Omega) \overline{ A(L,\bxi_b,\bzeta_b,\Omega)} 
\\
&& \quad \times \exp\Big(-2 i \Omega t - \frac{\Omega^2}{B^2}\Big) d\bxi_a d\bxi_b d\bzeta_a d\bzeta_b d\Omega
.
 \end{eqnarray*}
In the weakly scattering regime $L \ll \ell_{\rm sca}$,  we get 
\begin{eqnarray*}
&&{\rm Var} \big[{u}_{\rm tr} \big( \frac{t}{\eps} , \frac{\by}{\eps}+\bx ; \frac{\by}{\eps}\big) \big]
= \EE\big[ \big| {u}_{\rm tr} \big(\frac{t}{\eps}, \frac{\by}{\eps}+\bx ; \frac{\by}{\eps}\big) \big|^2\big] 
-
\big| \EE\big[ {u}_{\rm tr} \big(\frac{t}{\eps},\frac{\by}{\eps}+\bx;\frac{\by}{\eps}\big) \big] \big|^2
\\
&&\stackrel{\eps\to 0}{\longrightarrow} 
0.
\end{eqnarray*}
In the strongly scattering regime $L \gg \ell_{\rm sca}$, 
we find by Proposition \ref{prop:4} that
\begin{eqnarray*}
&&{\rm Var} \big[{u}_{\rm tr} \big( \frac{t}{\eps} , \frac{\by}{\eps}+\bx ; \frac{\by}{\eps}\big) \big]
= \EE\big[ \big| {u}_{\rm tr} \big(\frac{t}{\eps}, \frac{\by}{\eps}+\bx ; \frac{\by}{\eps}\big) \big|^2\big] 
-
\big| \EE\big[ {u}_{\rm tr} \big(\frac{t}{\eps},\frac{\by}{\eps}+\bx;\frac{\by}{\eps}\big) \big] \big|^2
\\
&&\stackrel{\eps\to 0}{\longrightarrow} 
\frac{r_0^2 \rho_0^2}{(4 \pi)^2 B \sqrt{\pi}} \int_\RR \iint_{\RR^2\times \RR^2}\exp\Big( -\frac{r_0^2+\rho_0^2}{8} (|\bzeta_a|^2+|\bzeta_b|^2) +\frac{r_0^2-\rho_0^2}{4} \bzeta_a\cdot \bzeta_b\Big) \\
&&\quad 
\times
\exp \big(- a_\Omega(L)-  h_\Omega(L) |\bzeta_a|^2 
- \overline{a_\Omega}(L) - \overline{h_\Omega}(L) |\bzeta_b|^2  \big)\\
&& \quad \times \exp \Big(i (\bzeta_a-\bzeta_b) \cdot \by -2 i \Omega t -\frac{\Omega^2}{B^2} \Big) d\bzeta_a d\bzeta_b d\Omega ,
\end{eqnarray*}
with
\begin{eqnarray}
\nonumber
h_\Omega(z) &=&
\frac{c_o^2 z^2}{\omega_0^2} b_\Omega(z) -\frac{c_o z}{\omega_0} c_\Omega(z) +d_\Omega(z) \\
&=&  \frac{Dz^3}{48} \big( \Psi_d - 3\Psi_c+3\Psi_b\big) \Big( \sqrt{\frac{D \Omega}{4 c_o}} z \Big) .
\end{eqnarray}
For $\by ={\bf 0}$,  this gives
 \begin{eqnarray*}
{\rm Var} \big[{u}_{\rm tr} \big( \frac{t}{\eps} , \bx ;{\bf 0} \big) \big]
&=&
\frac{1}{ \sqrt{\pi}} \int_\RR \hat{a}_s  \exp \big( -s^2 -2i Bt s\big)
\Big(1+\frac{DL^3 \hat{h}_s}{12\rho_0^2}\Big)^{-1}
\Big(1+\frac{DL^3\hat{h}_s}{12r_0^2}\Big)^{-1} d s ,
\end{eqnarray*}
 with $\hat{a}_s = \exp(-2 {\rm Re}(a_{Bs}(L)) )$ and $ \frac{DL^3}{12} \hat{h}_s = 4{\rm Re}( h_{Bs}(L)) $.
More explicitely,
\begin{eqnarray*}
\hat{a}_s &=& \hat{\cal A}  \big( \frac{D B L^2|s|}{4 c_o}\big),\quad\quad \hat{\cal A}(s)=     \frac{2}{\big(\cos +\cosh\big)\big( \sqrt{2s} \big)} ,\\
\hat{h}_s &=&\hat{\cal H}  \big( \frac{D B L^2|s|}{4 c_o}\big),\quad\quad \hat{\cal H}(s) = {\rm Re}\Big\{
 \big(\Psi_d -3\Psi_c+3\Psi_b\big)  \big(  \sqrt{s} \big)\Big\}.
\end{eqnarray*}
The SNR defined by
$$
{\rm SNR} = \frac{\big| \EE [ {u}_{\rm tr} \big( 0 , {\bf 0} ;{\bf 0}\big) ] \big|^2}{{\rm Var} \big[{u}_{\rm tr} \big( 0 , \bx ;{\bf 0}\big) \big]}
$$
is therefore
\begin{equation}
\label{snr:tr1}
{\rm SNR}^{-1} = \frac{2}{ \sqrt{\pi}} \int_0^\infty \hat{\cal A}  \big( \frac{D B L^2s}{4 c_o}\big) 
\frac{\big( 1+\frac{DL^3}{12 r_0^2}\big)^2  \exp \big( -s^2 \big)}{\big(1+\frac{DL^3}{12 \rho_0^2}\hat{\cal H}  \big( \frac{D B L^2s}{4 c_o}\big) \big)
\big(1+\frac{DL^3}{12 r_0^2}\hat{\cal H}  \big( \frac{D B L^2s}{4 c_o}\big) \big) } d s .
\end{equation}
We can observe that there is a complicated interplay between spatial and frequency effects,
that depends on three dimensionless parameters: $\frac{DL^3}{12 r_0^2}$,
$\frac{DL^3}{12 \rho_0^2}$, and $\frac{D B L^2}{4 c_o}$.
We plot in Figure \ref{fig:snr} the SNR for different values of these three parameters,
where we can see that the SNR increases with these three parameters,
and analyze below its asymptotic behavior.

\begin{figure}
\begin{center}
\begin{tabular}{cc}
\includegraphics[width=6.0cm]{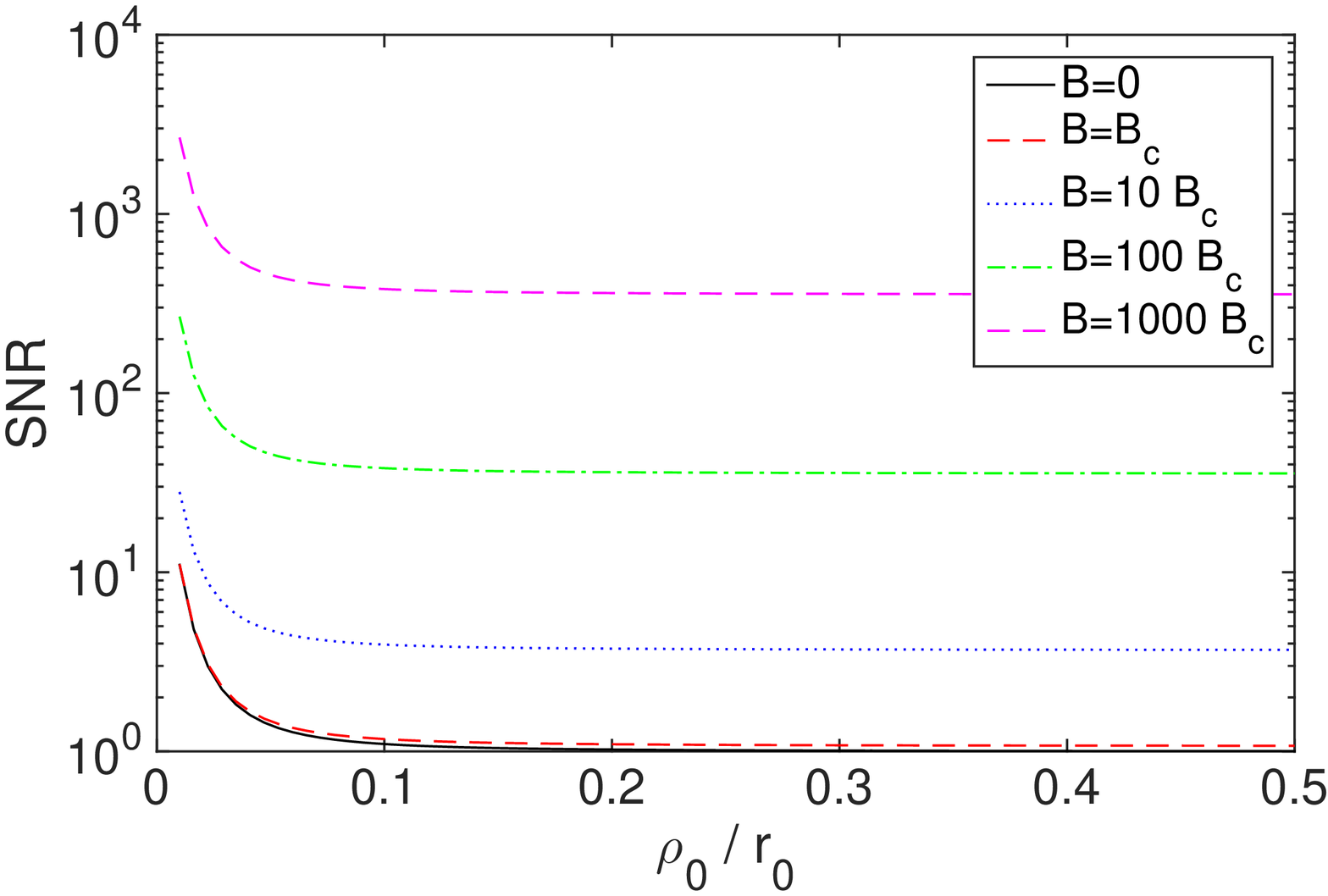}&  
\includegraphics[width=6.0cm]{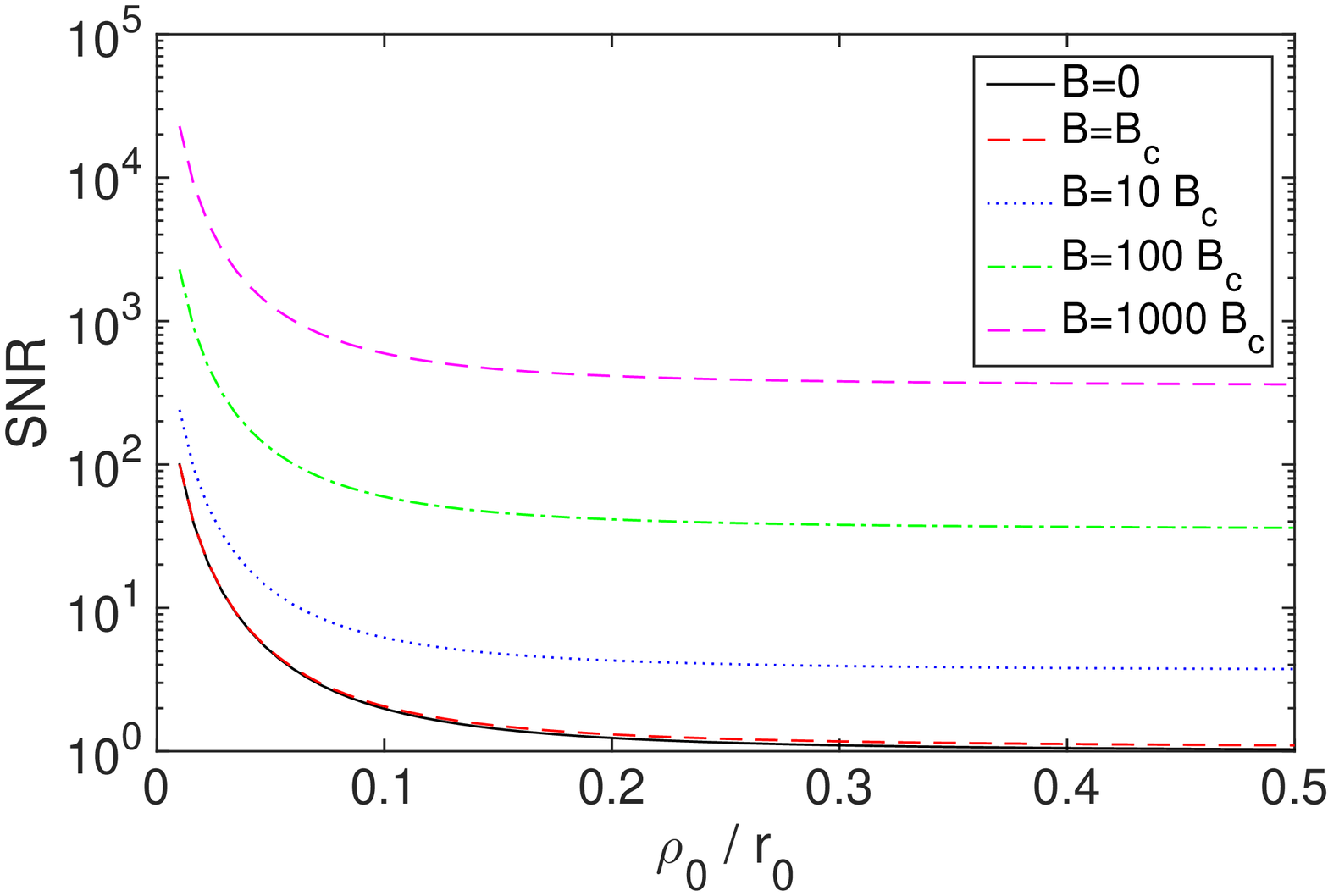}  \\
$\frac{DL^3}{12r_0^2}=0.001$ & $\frac{DL^3}{12r_0^2}=0.01$ \\
\includegraphics[width=6.0cm]{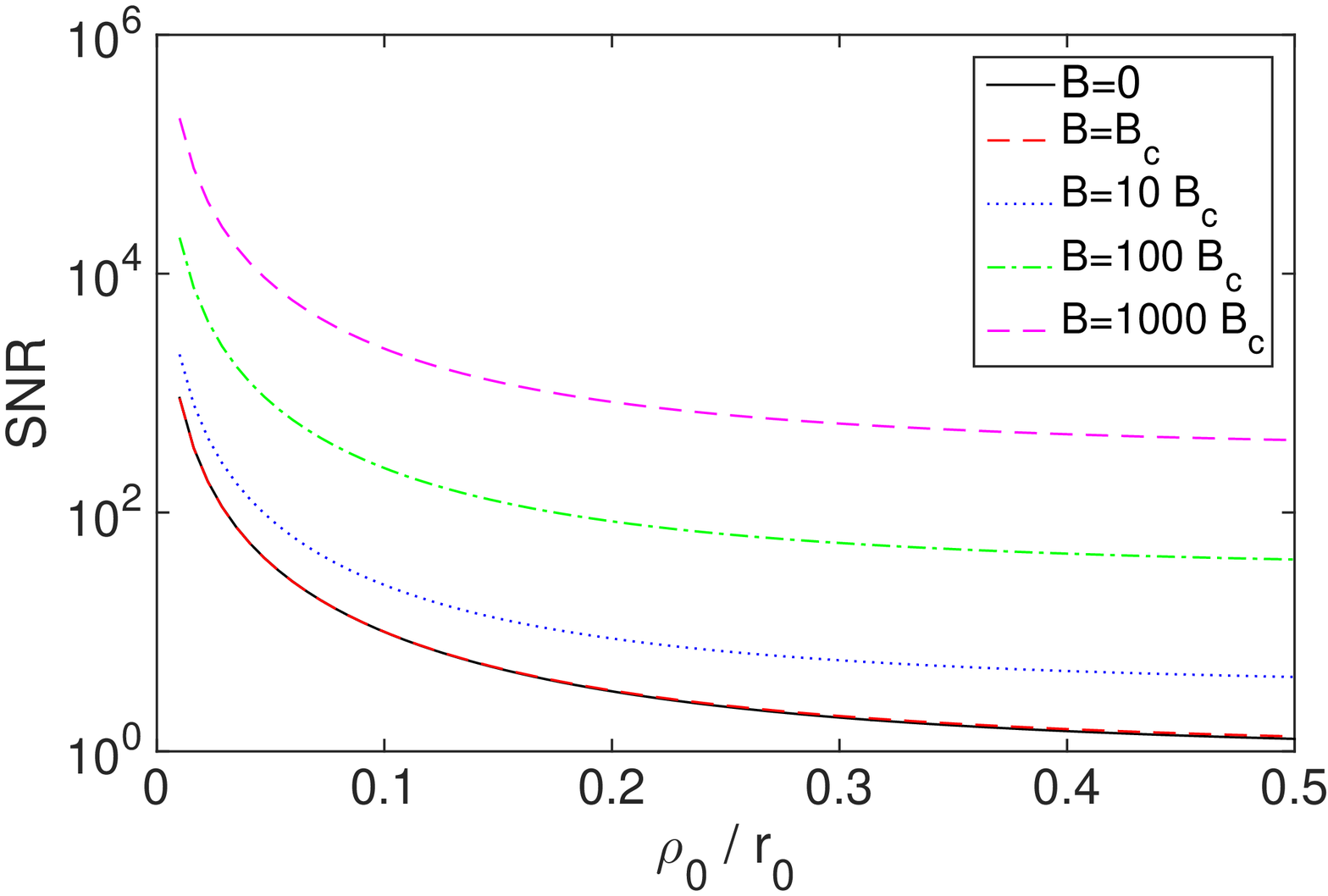}&  
\includegraphics[width=6.0cm]{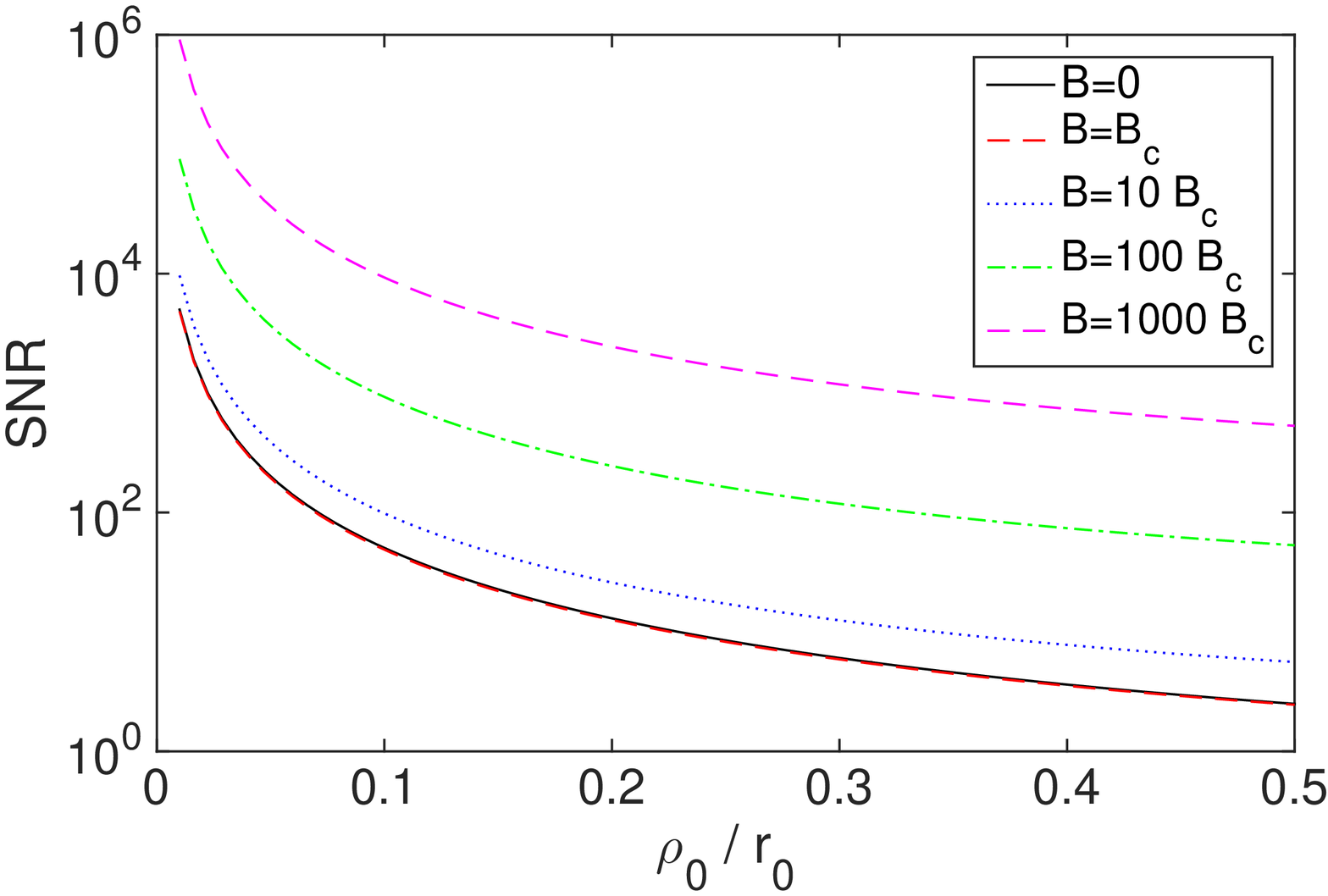}  \\
$\frac{DL^3}{12r_0^2}=0.1$ & $\frac{DL^3}{12r_0^2}=1$ \\
\includegraphics[width=6.0cm]{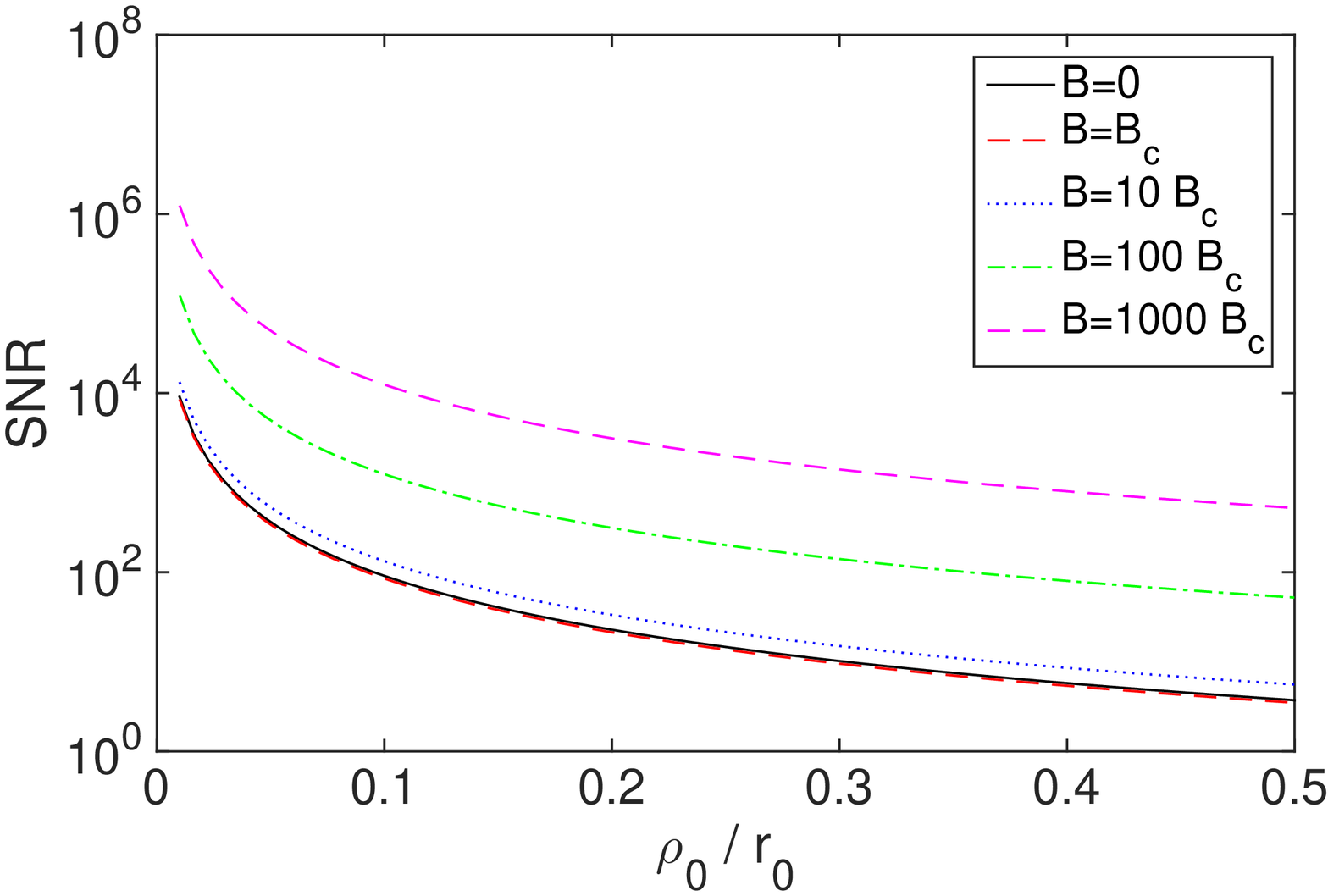}&  
\includegraphics[width=6.0cm]{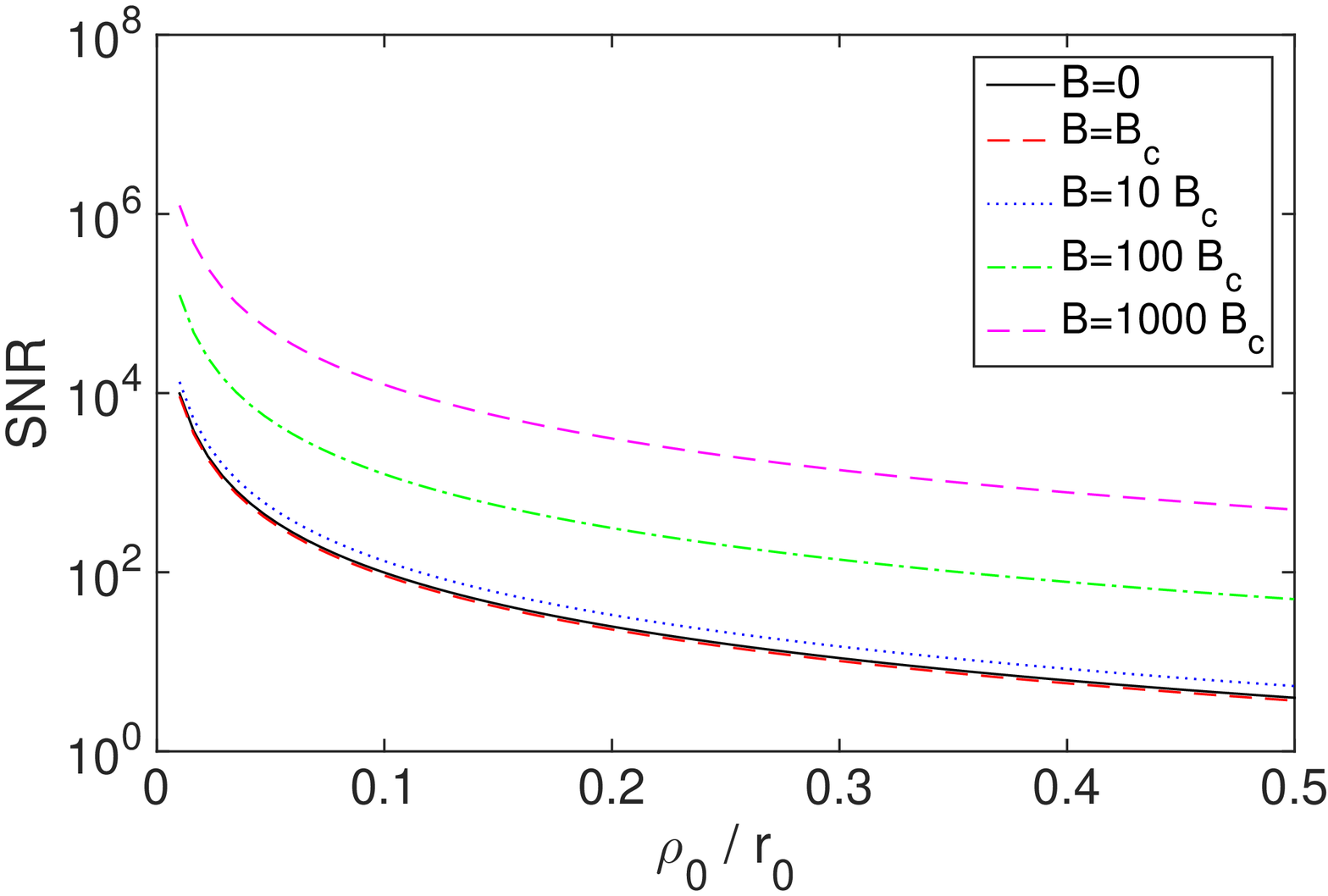}  \\
$\frac{DL^3}{12r_0^2}=10$ & $\frac{DL^3}{12r_0^2}=100$ 
\end{tabular}
\end{center}
\caption{Signal-to-noise ratio (\ref{snr:tr1}) of the time-reversed refocused wave. We denote $B_c = \frac{4c_o}{D  L^2}$.
The value of the SNR for $B=0$ is (\ref{eq:snrB0}).}
\label{fig:snr}
\end{figure}

The functions $\hat{\cal A}$ and $\hat{\cal H}$ satisfy
%For $ s \ll 1$, we have
\begin{eqnarray*}
\hat{\cal A}(s) \simeq  \left\{
\begin{array}{ll}
1 & \mbox{ if } s\ll 1  \\
4 \exp \big( -    \sqrt{2 s}\big)& \mbox{ if } s\gg 1
\end{array}
\right. ,
\quad \quad
\hat{\cal H}(s) \simeq    \left\{
\begin{array}{ll}
1 & \mbox{ if } s\ll 1  \\
\frac{3}{4 \sqrt{2}} s^{-3/2}& \mbox{ if } s\gg 1
\end{array}
\right.
 .
\end{eqnarray*}
%For $ |s| \gg 1$, we have
%\begin{eqnarray*}
%\hat{\cal A}(s) \simeq 4 \exp \big( -    \sqrt{2 s}\big) ,\quad \quad
%\hat{\cal H}(s) \simeq 
%\frac{3}{4 \sqrt{2}} s^{-3/2}  .
%\end{eqnarray*}

Therefore, if $B$ is much smaller than  $4 c_o/(DL^2)$, 
then 
 \begin{eqnarray}
{\rm SNR}
&\simeq&
\frac{1+\frac{DL^3}{12\rho_0^2}}{
1+\frac{DL^3}{12r_0^2}} ,
\label{eq:snrB0}
\end{eqnarray}
which shows that the source bandwidth does not affect the statistical stability of the refocused wave under these conditions.
We have
\begin{equation}
\label{snr:tr2}
{\rm SNR} \simeq
\left\{
\begin{array}{ll} 
\displaystyle
1 & 
\displaystyle
\mbox{ if } \rho_0^2 > \frac{DL^3}{12} ,\\
\displaystyle
\frac{DL^3}{12\rho_0^2} &
\displaystyle
 \mbox{ if } r_0^2 > \frac{DL^3}{12} > \rho_0^2,\\
\displaystyle
\frac{r_0^2}{\rho_0^2} &
\displaystyle
 \mbox{ if }  \frac{DL^3}{12} > r_0^2 .
\end{array}
\right.
\end{equation}
In particular, we recover the fact that, when $DL^3/12 \gg r_0^2$, the SNR is equal to 
 the number $r_0^2/\rho_0^2$ of elements of the TRM.

If $B$ is much larger than  $4 c_o/(DL^2)$, then
 \begin{eqnarray}
{\rm SNR}^{-1}
&\simeq&  \frac{4c_o}{D B L^2}
\frac{2}{ \sqrt{\pi}} \int_0^\infty \hat{\cal A} (s)
\frac{\big( 1+\frac{DL^3}{12 r_0^2}\big)^2}{\big(1+\frac{DL^3}{12 \rho_0^2}\hat{\cal H} (s) \big)
\big(1+\frac{DL^3}{12 r_0^2}\hat{\cal H} (s) \big) } d s ,
\end{eqnarray}
and we find
\begin{equation}
\label{snr:tr3}
{\rm SNR} \simeq
\left\{
\begin{array}{ll} 
\displaystyle
\frac{DL^2 B}{4c_o {\cal A}_1} & 
\displaystyle
\mbox{ if } \rho_0^2 > \frac{DL^3}{12} ,\\
\displaystyle
\frac{DL^2 B}{4c_o {\cal A}_2} 
\frac{DL^3}{12\rho_0^2} &
\displaystyle
 \mbox{ if } r_0^2 > \frac{DL^3}{12} > \rho_0^2,\\
\displaystyle
\frac{DL^2 B}{4c_o {\cal A}_3} 
\frac{r_0^2}{\rho_0^2} &
\displaystyle
 \mbox{ if }  \frac{DL^3}{12} > r_0^2 ,
\end{array}
\right.
\end{equation}
where
$$
{\cal A}_j  = \frac{2}{ \sqrt{\pi}}  \int_0^\infty \frac{\hat{\cal A}(s)}{\hat{\cal H}(s)^{j-1}} ds ,
$$
or more explicitly ${\cal A}_1   \simeq 2.81$, ${\cal A}_2 \simeq 4.40$, and
${\cal A}_3 \simeq 8.05$.
This shows that the source bandwidth improves the statistical stability of the refocused wave,
provided it is larger than $4c_o/(DL^2)$.
In particular, if scattering is so strong that both $DL^3/12 \gg r_0^2$ and $4c_o/(DL^2) \ll B$, 
then the SNR is proportional to the number of elements $r_0^2 / \rho_0^2$
of the TRM times the number of uncorrelated frequency components $(D B L^2)/(4c_o)$
(i.e. the ratio of $B$ over the coherence frequency $4c_o/(DL^2)$).
The equations (\ref{snr:tr2}) and (\ref{snr:tr3}) give the SNR in the different cases and quantify the usual assertion found 
in the literature that the profile of the time-reversed field is self-averaging by independence of the frequency
components of the wave field.
%They also clarify the hypotheses which ensure the physical conjecture that the signal-to-noise ratio of the time-reversed refocused wave
%is given by the number of elements of the time-reversal mirror times the number of independent
%frequency components in the source bandwidth \cite{derode01,lerosey04}.

\subsection{The Scintillation Regime Revisited}
In the scintillation regime (\ref{sca:sci2})
(in which $\rho_0$ is of the same order as the correlation length of the random medium),
we find from Proposition \ref{prop:1:2} that the mean refocused field is
\begin{eqnarray*}
&& \EE\big[ {u}_{\rm tr} \big( \frac{t}{\eps} ,\frac{\by}{\eps}+\bx ; \frac{\by}{\eps}\big)\big]\stackrel{\eps\to0}{\longrightarrow}
 \frac{\exp(-B^2 t^2/2) e^{-i\omega_0 t/\eps}}{(2\pi)^4}  \\
&&\times \iint_{\RR^2\times \RR^2} 
\frac{r_0^2 }{4\pi}
\exp\Big( - \frac{r_0^2 |\bzeta|^2}{4} +i \bx\cdot\bxi + i \by \cdot \bzeta
 - i \frac{L c_o }{\omega_0} \bxi \cdot \bzeta  \Big) A(L,\bxi,\bzeta,0) 
 d\bxi d\bzeta ,
 \end{eqnarray*}
  where $A$ is given by (\ref{def:A:2}).

In the weakly scattering regime $L \ll \ell_{\rm sca}$, we find
\begin{eqnarray*}
 \EE\big[ {u}_{\rm tr} \big( \frac{t}{\eps} ,\frac{\by}{\eps}+\bx ; \frac{\by}{\eps}\big)\big]\stackrel{\eps\to0}{\longrightarrow}
 \exp(-B^2 t^2/2)   e^{-i\omega_0 t/\eps}  
\iint_{\RR^2\times \RR^2}
\frac{\rho_0^2 r_0^2 }{4\pi^2}
\exp\Big( - \frac{r_0^2 |\bzeta|^2}{4} - \rho_0^2 |\bxi|^2   \\
+i \bx\cdot\bxi+ i \by \cdot \bzeta - i \frac{L c_o }{\omega_0} \bxi \cdot \bzeta  \Big) 
 d\bxi d\bzeta.
 \end{eqnarray*}
 More exactly, if $\by={\bf 0}$, we get
 \begin{eqnarray*}
 \EE\big[  {u}_{\rm tr} \big( \frac{t}{\eps} ,\bx;{\bf 0} \big)\big]
\stackrel{\eps\to 0}{\longrightarrow} 
\frac{ \exp(-B^2 t^2/2)  e^{-i\omega_0 t/\eps}  }{1+\frac{c_o^2 L^2}{\omega_0^2 \rho_0^2 r_0^2} }
\exp\Big( - \frac{|\bx|^2}{4\rho_0^2 \big( 1+\frac{c_o^2 L^2}{\omega_0^2 \rho_0^2 r_0^2} \big) }\Big),
\end{eqnarray*}
which shows that we get refocusing because the TRM element size $\rho_0$ is small enough. 
When $\rho_0$ becomes very small, i.e. smaller than $c_o L /(\omega_0 r_0)$, then the radius 
of the refocused wave is $\sqrt{2} c_o L/(\omega_0 r_0)$, 
which is the diffraction limit or Rayleigh resolution formula.

In the strongly scattering regime $L \gg \ell_{\rm sca}$, 
we find by Proposition \ref{prop:4:2}  that
\begin{eqnarray*}
 \EE\big[ {u}_{\rm tr} \big( \frac{t}{\eps} ,\frac{\by}{\eps}+\bx ; \frac{\by}{\eps}\big)\big]
&=&
\frac{r_0^2 \exp(-B^2 t^2/2)  e^{-i\omega_0 t/\eps}  }{4 \pi} \\
&& \times \int_{\RR^2} \exp\big( - e_0(L) |\bzeta|^2 -f_0(L) \bx\cdot \bzeta - b_0(L) |\bx|^2 +i \bzeta \cdot \by \big)d\bzeta ,
\end{eqnarray*}
with
\begin{eqnarray*}
b_0(z) = \frac{\omega_0^2 D z}{16 c_o^2}  +\frac{1}{4\rho_0^2} ,\quad \quad 
e_0(z)  =\frac{r_0^2}{4}  + \frac{Dz^3}{48}  +\frac{c_o^2 z^2}{4\omega_0^2 \rho_0^2} ,\quad \quad 
f_0(z) = -  \frac{\omega_0 D z^2}{16 c_o}  - \frac{c_o z}{2\omega_0 \rho_0^2}.
\end{eqnarray*}
In particular, if $\by={\bf 0}$, we find
\begin{align}
\nonumber
 \EE\big[ {u}_{\rm tr} \big( \frac{t}{\eps} , \bx ; {\bf 0}\big)\big]
 =& \frac{   e^{-i\omega_0 t/\eps}  }{1+\frac{DL^3}{12 r_0^2} +\frac{c_o^2 L^2}{\omega_0^2 \rho_0^2 r_0^2} } \exp \Big( - \frac{B^2 t^2}{2} \Big)\\
 &\times\exp\Big(  - \frac{ \frac{\omega_0^2 DL}{16 c_o^2}  \big(1+\frac{DL^3}{48 r_0^2}\big)
 +\frac{1}{4\rho_0^2}  \big(1+\frac{DL^3}{12 r_0^2}\big)
 }{1+\frac{DL^3}{12 r_0^2} +\frac{c_o^2 L^2}{\omega_0^2 \rho_0^2 r_0^2}}
|\bx|^2 \Big) ,
 \end{align}
 which makes it possible to identify the amplitude and the radius $R$ of the refocused wave (as in (\ref{eq:meanrefocussf:2}-\ref{eq:radiussf:2})):
\begin{equation}
 R^2 = \frac {1+\frac{DL^3}{12 r_0^2} +\frac{c_o^2 L^2}{\omega_0^2 \rho_0^2 r_0^2}}
{\frac{\omega_0^2 DL}{8 c_o^2}  \big(1+\frac{DL^3}{48 r_0^2}\big)
 +\frac{1}{2\rho_0^2}  \big(1+\frac{DL^3}{12 r_0^2}\big)
 }.
\end{equation}
 The radius 
is smaller when the TRM element size $\rho_0$ is smaller (we have $\partial R/\partial \rho_0 >0$) and when the random medium is more scattering
(we have $\partial R / \partial D<0$).
It is not surprising  that time-reversal refocusing is improved when the TRM has many array elements and better resolve
the  wave field on the mirror, moreover,
it is well-known that random scattering improves time-reversal refocusing by multipathing \cite{blomgren,book1}.
When $\rho_0$ becomes very small,  i.e. smaller than  $c_o L /(\omega_0 r_0)$
and  $c_o / (\omega_0 DL)$, then the radius of the refocused wave
is equal to 
$$
R= 
\frac{\sqrt{2} c_o L }{\omega_0 \sqrt{ r_0^2 +\frac{DL^3}{12} } } ,
$$
which is the Rayleigh resolution formula but with the enhanced 
TRM radius $r_{\rm eff} = \sqrt{ r_0^2 +\frac{DL^3}{12} }$.
This result can be found in the literature \cite{blomgren}.
 
Finally, we find from Proposition \ref{prop:3:2} that 
\begin{equation}
\lim_{\eps \to 0}
\EE\big[ \big| {u}_{\rm tr} \big( \frac{t}{\eps} ,\frac{\by}{\eps}+\bx ; \frac{\by}{\eps}\big) \big|^2\big] 
=
\lim_{\eps \to 0}
\big| \EE\big[ {u}_{\rm tr} \big( \frac{t}{\eps} ,\frac{\by}{\eps}+\bx ; \frac{\by}{\eps}\big)\big] \big|^2.
\end{equation}
The refocused wave is statistically stable in this regime, because there are many elements (of the order of $\eps^{-2}$) in the TRM.

\section{Conclusion}
In this paper we have analyzed the fourth-order moment of the random paraxial Green's function 
at four different frequencies. 
We have obtained  a complete characterization in the scintillation regime,
which  makes it possible to quantify the speckle memory effect in the frequency domain  
in terms of the propagation distance through the scattering medium and statistics of the medium fluctuations.
Using this result we have also been able to obtain for the  first time a quantitative  characterization of  the 
statistical stability  in the classic  time-reversal refocusing experiment. This characterization depends on  
the radius of the time-reversal mirror, the size of its elements, and the source bandwidth,
as well as  the statistics of the medium fluctuations.
As anticipated and observed in experiments \cite{derode01,lerosey04},
when the medium is strongly scattering, the signal-to-noise ratio of the time-reversed refocused wave
is given by the number of elements of the time-reversal mirror times the number of independent
frequency components in the source bandwidth.

 \section*{Acknowledgements}

JG was supported by the Agence Nationale pour la Recherche under Grant No. ANR-19-CE46-0007 (project ICCI), 
 and Air Force Office of Scientific Research under grant FA9550-18-1-0217.
 \\
KS was supported by the Air Force Office of Scientific Research under grant FA9550-18-1-0217,  and   the National Science Foundation under grant DMS-2010046.
 
\appendix 
 
\section{The White-Noise Paraxial Regime and the Scintillation Regime}
\label{app:a}%
In this paper we consider a primary scaling regime in which the solutions of the Helmholtz equation  
(\ref{eq:helm})
can be approximated in terms of the Green's function solving the  It\^o-Schr\"odinger equation  (\ref{eq:model}). 
This is the  white-noise paraxial regime where the propagation distance is large compared to the 
correlation length of the medium which is on the same scale as the beam radius (or source width),
which in turn is large compared to the wavelength.
The  It\^o-Schr\"odinger  description allows us to get explicit
expressions for the second-order moments of the wave field 
at a fixed frequency. %  as discussed in Section \ref{sec:mom2}. 
%The second-order moment description plays an important role in applications since it is needed to
%describe for instance the intensity of a transmitted wave field or correlations of the wave field 
%which is used in many imaging functionals. 
In this paper we use the second moment  at two frequencies  to describe the mean refocused  wave field in 
time reversal  when we average with respect to the random medium in (\ref{eq:rm})
corresponding to averaging with respect to the driving Brownian motion $B$ in (\ref{eq:model}).   
 It is also important to describe the statistical
stability of empirical covariances or time reversed fields when 
 formed from one realization of the medium.  
 Such statistical stability or signal-to-noise ratio analysis requires expressions for the fourth
moment of the wave field with the wave field components in the moment evaluated at different frequencies.
In this paper we consider a secondary scaling regime,
the scintillation regime, which allows us to get explicit expressions for the multi-frequency 
moments of the wave field.  The scintillation regime 
is valid in the paraxial white-noise regime when, additionally, the correlation length of the medium is small compared to the beam radius as described in Section \ref{sec:regime}.  

In this appendix we  discuss  these two scaling regimes,
the paraxial white-noise and scintillation regimes, and the relation to the  It\^o-Schr\"odinger equation, and we refer to \cite{garniers1,garniers4} for the full derivation.  
 Consider  $\hat{u}(z,\bx)$ satisfying  the Helmholtz equation
(\ref{eq:helm}). 
Let $\sigma$ be the standard deviation of the fluctuations of the index of refraction
$n$ in this equation.   Moreover, assume here that the random fluctuations of the index of
refraction  is isotropic and denote by $l_{\rm c}$ the correlation length of the fluctuations, 
by $\lambda$ the  wavelength,
by $L$ the typical propagation distance, 
and by $r_o$ the transverse radius of the initial beam,
which in this paper corresponds to the dimension of the time-reversal mirror. 
We introduce the wavenumber defined by
\begin{equation}
   k = \frac{\omega}{c_o} = \frac{2\pi}{\lambda},
\end{equation}
with $c_o$ the background wave speed.  
In this framework the variance $C({\bf 0})$ of the Brownian field in the It\^o-Schr\"odinger equation
(\ref{eq:model}) is of order $\sigma^2 l_{\rm c}$ and the transverse scale of variation of the covariance function
$C(\bx)$ in (\ref{def:C2}) is of order $l_{\rm c}$.
   
First,  we consider the primary (paraxial white-noise) scaling that leads to
the It\^o-Schr\"odinger equation (\ref{eq:model2}), which corresponds 
to zooming in on a high-frequency beam that propagates
over a distance that is large relative to the correlation length of the medium, which is itself large
relative to the wavelength,
moreover, the medium fluctuations are small. 
Explicitly, we assume the primary scaling  when
\begin{eqnarray*}
   \frac{l_{\rm c}}{r_o} \sim 1  \, ,   \quad\quad
      \frac{l_{\rm c}}{L}  \sim   \theta\, ,    \quad \quad
     \frac{l_{\rm c}}{\lambda}     \sim    \theta^{-1} \,   ,     \quad  \quad
     \sigma^2 \sim \theta^3 \,  , 
\end{eqnarray*}
where $\theta$ is a small dimensionless parameter.
We introduce dimensionless coordinates by:
\begin{eqnarray*}
\bx =   { l_{\rm c} }    { \bx' }  , \quad  \quad  
z =  L z', \quad\quad  
k = \frac{k' }{  l_{\rm c} \theta }, \quad \quad  
 \nu ( L z',  l_{\rm c} \bx') = \theta^{3/2}  \nu' \Big(\frac{z'}{\theta} ,  \bx'  \Big)   ,
\end{eqnarray*}
with $\nu$ being the relative fluctuations of the random medium  (\ref{eq:rm}). 
Then dropping  `primes'   we find that in  dimensionless coordinates
the Helmholtz equation reads
$$
\left({\theta^2} \partial_z^2+\Delta_\bx \right) \hat{u}^\theta + \frac{k^2}{\theta^2} \left(1 
+ \theta^{3/2} \nu \left( \frac{z}{\theta},\bx \right) \right) \hat{u}^\theta= 0 .
$$  
We look for the behavior of the slowly-varying envelope $v^\theta$
for  propagation distances of order one in the dimensionless coordinates:
$$
 \hat{u}^\theta  ({z},\bx  )  = \exp\Big( i \frac{k z}{\theta^2} \Big)
v^\theta ( z, \bx  ) 
$$
that satisfies (by the chain rule)
$$
{\theta^2} \partial_{z}^2 v^\theta
+ 
\left( 2 i k \partial_z 
v^\theta+ \Delta_\bx v^\theta +  \frac{k^2}{\theta^{1/2}}  \nu\Big(  \frac{z}{\theta} , \bx\Big) 
 v^\theta \right)=0  .
$$
Heuristically, when $\theta \ll 1$ the backscattering term ${\theta^2} \partial_{z}^2  v^\theta$
can be neglected and we obtain a Schr\"odinger-type equation 
in which the potential fluctuates in $z$ on the  
scale $\theta$  and is of amplitude
$ \theta^{-1/2}$. 
This diffusion approximation then gives the
It\^o-Schr\"odinger equation or white-noise limit
driven by a  Brownian field:
\begin{equation}\label{eq:model2}
2 i k d v +  \Delta_\bx v \, dz +   k^2 
 v\circ dB(z,\bx) =0 ,
\end{equation}
or  (\ref{eq:model}) when written in terms of the Green's function.
This heuristic derivation can be made rigorous as shown in \cite{garniers1}.
This equation is written in  Stratonovich form as represented by the $\circ$ symbol.
This reflects the fact that we arrive at this description as a  scaling limit of a physical model where the fluctuating random field $\nu$ multiplying  
the wave field $u$  has  a finite correlation length in the $z$-direction.      
The Stratonovich  stochastic integral can be interpreted in the simplest case as 
 the limit when the integrand is evaluated at the midpoint of the interval of increment
 of the driving Brownian field and thus naturally appears in the diffusion 
 limit when  $\nu$ is replaced by a driving Brownian field.  
 Note that the It\^o interpretation 
 of   (\ref{eq:model2}) has the form  
\begin{equation}\label{eq:model3}
d v =  \frac{i}{2k} 
 \Delta_\bx v \, dz  - \frac{k^2 C({\bf 0})}{8} v dz+   \frac{i k}{2 }   v \, dB(z,\bx) .
\end{equation}
In this representation the last term integrates to a zero-mean martingale term
and  the added damping term is the Stratonovich corrector. We then  have for
the mean  field $\bar{v}=\EE[v]$:
  \begin{equation}\label{eq:model4}
 \partial_z \bar{v}  = \frac{i}{2k} 
 \Delta_\bx \bar{v}   - \frac{k^2 C({\bf 0})}{8} \bar{v} . 
  \end{equation} 
Here the damping term reflects scattering and transfer of energy from the coherent
part of the wave field to the incoherent part so that the mean field is exponentially damped.   
Indeed the reciprocal of the damping parameter was referred to as the scattering mean free path in (\ref{def:lsca})
and characterizes the distance a coherent wave can travel before  wave energy is scattered 
to the incoherent part.  

The representation (\ref{eq:model3}) gives closed equations for moments of all orders. 
 We can easily solve explicitly the first-order moment in (\ref{eq:model4}) 
  and  also the second-order moment equations at a single frequency. 
  As mentioned however there is no explicit solution for the  fourth moment equations.
  We discuss now the secondary scaling limit that we refer to as the scintillation regime
  where we can solve explicitly for the fourth moment both in the single frequency case 
  (and also in the  multi-frequency case up to a second-order lateral scattering function 
  that can be explicitly characterized in the case of relatively strong scattering). 
In the scintillation regime the correlation 
length of the medium $l_{\rm c}$ is smaller than the initial beam radius $r_o$.
Moreover, the medium fluctuations are weak, and the beam propagates deep
into the medium. We  then get the modified scaling picture
\begin{equation}
\label{eq:scaling app}
    \frac{l_{\rm c}}{r_o} \sim  \eps  \, ,  \quad\quad
     \frac{l_{\rm c}}{L}  \sim   \theta \eps \, ,   \quad\quad
     \frac{l_{\rm c}}{\lambda} \sim \theta^{-1}   \,   ,    \quad\quad
     \sigma^2 \sim \theta^3 \eps \,      ,
\end{equation}
and we assume $  \theta \ll \eps \ll 1$.
This means that the paraxial white-noise limit $\theta \to 0$ is taken first, 
and we find
$$ 
2ik d {v}^\eps      
    +\Delta_{\bx}   {v}^\eps \, dz
   +  k^2   {v}^\eps   \circ  d{B}^\eps(z,\bx) 
 =0 , 
$$
where the radius $r_o^\eps$ of the initial condition is of  order $\eps^{-1}$,  
the variance $C^\eps({\bf 0})$ of the Brownian field $B^\eps$ is of order $\eps$,
and the propagation distance $L^\eps$ is of order $\eps^{-1}$.  
Then the limit  $\eps\to 0$ is applied, corresponding to the scintillation regime. 
In the regime (\ref{eq:scaling app}) the effective strength $k^2 C^\eps({\bf 0}) L^\eps$ of the 
Brownian field is of order one since $  \sigma^2 l_{\rm c} L/\lambda^2   \sim 1$.
Moreover,  $L^\eps \lambda/ (r_o^\eps)^2$ is of order $\eps$.
That is,   the typical propagation distance is smaller than the Rayleigh length of the initial beam.
Here the Rayleigh length corresponds to the distance when
the transverse radius of the beam has roughly  doubled by diffraction
in the homogeneous medium case and it  is given by $r_o^2/\lambda$.
Indeed,  it is seen  in Section \ref{sec:4} that the propagation distance at which relevant phenomena arise 
in the random case is of the order
of $r_o l_{\rm c} / \lambda$,  which is smaller than the Rayleigh distance of
the homogeneous medium $r_o^2/\lambda$.


\begin{thebibliography}{99}

\bibitem{andrews}
L. C. Andrews and R. L. Philipps, 
{\it Laser Beam Propagation Through Random Media},
SPIE Press, Bellingham, 2005.

\bibitem{bertolotti}
J. Bertolotti, E. G. van Putten, C. Blum, A. Lagendijk, W. L. Vos, and A. P. Mosk,
Non-invasive imaging through opaque scattering layers,
Nature {\bf 491} (2012), 232-234.

\bibitem{blomgren}  
P. Blomgren, G. Papanicolaou, and H. Zhao, 
Super-resolution in time-reversal acoustics, 
J. Acoust. Soc. Am. {\bf 111} (2002), 230-248. 

\bibitem{borcea}  L. Borcea,
 Interferometric imaging and time reversal in random media,  Springer Encyclopedia of Applied and Computational Mathematics, 
 (2011). 

\bibitem{dawson84} 
D. Dawson and G. Papanicolaou,  
{A random wave process},
Appl. Math. Optim. {\bf 12} (1984), 97-114.   

\bibitem{derode01}
A.  Derode,  A.  Tourin  and  M.  Fink,   
Random multiple scattering of ultrasound.   II.  Is time reversal a self-averaging process~?, 
Phys.  Rev.  E  {\bf 64} (2001), 036606.

\bibitem{feng88}
S. Feng, C. Kane, P. A. Lee, and A. D. Stone,
Correlations and fluctuations of coherent wave transmission through disordered media,
Phys. Rev. Lett. {\bf 61} (1988), 834-837.

\bibitem{fink00}
M. Fink,  D. Cassereau, A. Derode, C. Prada, P. Roux, M. Tanter, J. Thomas, and F. Wu, 
Time-reversed acoustics, Rep. Prog. Phys. {\bf 63} (2000), 1933-1995.

\bibitem{book1}
J.-P. Fouque, J. Garnier, G. Papanicolaou,  and K. S\o lna,
{\em  Wave Propagation and Time Reversal in Randomly Layered Media},
 Springer,  New York, 2007. 

\bibitem{fps}
J.-P. Fouque,  G. Papanicolaou,  and Y. Samuelides,
{Forward and Markov approximation: the strong-intensity-fluctuations regime revisited},
Waves in Random Media {\bf 8} (1998), 303-314. 

\bibitem{freund88}
I. Freund, M. Rosenbluh, and S.  Feng,
Memory effects in propagation of optical waves through disordered media,
Phys. Rev. Lett. {\bf 61} (1988), 2328-2331.

\bibitem{noisebook}
J. Garnier and G. Papanicolaou,
{\em Passive Imaging with Ambient Noise},
Cambridge University Press, Cambridge, 2016.

%\bibitem{garniers0}
%J. Garnier and K. S\o lna,
%Random backscattering in the parabolic scaling,
%J. Stat. Phys. {\bf 131} (2008), 445-486. 

\bibitem{garniers1} 
J. Garnier and K. S\o lna,
Coupled paraxial wave equations in random media in the white-noise regime,
Ann. Appl. Probab. {\bf 19} (2009), 318-346. 

%\bibitem{garniers2}
%J. Garnier and K. S\o lna,
%Scaling limits for wave pulse transmission and reflection operators,
%Wave Motion {\bf 46} (2009), 122-143.

\bibitem{garniers3}
J. Garnier and K. S\o lna,
Scintillation in the white-noise paraxial regime,
Comm. Partial Differential Equations {\bf 39} (2014), 626-650.

\bibitem{garniers4}
J. Garnier and K. S\o lna,
Fourth-moment analysis for beam propagation in the white-noise paraxial regime,
Arch. Rational Mech. Anal. {\bf 220} (2016), 37-81.  

\bibitem{garniers5}
J. Garnier and K. S\o lna,
Focusing waves through a randomly scattering medium in the white-noise paraxial regime,
SIAM J. Appl. Math. {\bf 77} (2017), 500-519.

\bibitem{garniers6}
J. Garnier and K. S\o lna, 
Imaging through a scattering medium by speckle intensity correlations, 
Inverse Problems {\bf 34} (2018), 094003.

\bibitem{garniers7}
J. Garnier and K. S\o lna, 
Non-invasive imaging through random media, 
SIAM J. Appl. Math. {\bf 78} (2018),  3296-3315.

\bibitem{goodman}
J. W. Goodman, 
{\em Statistical Optics}, 
Wiley, New York, 2000.

\bibitem{ishimaru}
A. Ishimaru, 
{\em Wave Propagation and Scattering in Random Media},
Academic Press, San Diego, 1978.
 
\bibitem{ishimaru07}
A. Ishimaru, S. Jaruwatanadilok, and Y. Kuga, 
Time reversal effects in random scattering media on superresolution, shower curtain effects, and backscattering enhancement, 
Radio Sci. {\bf 42} (2007), RS6S28.
 
\bibitem{katz12}
O. Katz, E. Small, and Y. Silberberg,
Looking around corners and through thin turbid layers in real time with scattered incoherent light,
Nature Photon. {\bf 6} (2012), 549-553.

\bibitem{lerosey04}
G. Lerosey, J. de Rosny, A. Tourin, and M. Fink, 
Time reversal of electromagnetic waves,
Phys. Rev. Lett. {\bf 92} (2004), 193904.

\bibitem{lerosey07}
G. Lerosey, J. de Rosny, A. Tourin, and M. Fink, 
Focusing beyond the diffraction limit with far-field time reversal,
Science {\bf 315} (2007), 1120-1122.


\bibitem{mosk12}
A. P. Mosk, A. Lagendijk, G. Lerosey, and M. Fink, 
Controlling waves in space and time for imaging and focusing in complex media,
Nature Photon. {\bf 6} (2012), 283-292.

\bibitem{PRS04}
G. Papanicolaou, L. Ryzhik, and K. S{\o}lna,
Statistical stability in time reversal,
SIAM J.  Appl. Math.  {\bf 64}  (2004), 1133-1155.

\bibitem{popoff14}
S. M.  Popoff, A. Goetschy, S. F. Liew, A. D. Stone, and H. Cao,
Coherent control of total transmission of light through disordered media,
Phys. Rev. Lett. {\bf 112} (2014),  133903.

\bibitem{popoff10}
S. Popoff,	 G. Lerosey, M. Fink, A. C. Boccara, and S. Gigan,
Image transmission through an opaque material,
Nature Commun. {\bf 1} (2010), 1-5.

\bibitem{rotter}
S. Rotter and S. Gigan,
Light fields in complex media:
Mesoscopic scattering meets wave control,
Rev. Mod. Phys. {\bf 89} (2017), 015005.

\bibitem{strohbehn}
J. W. Strohbehn, ed., 
{\it Laser Beam Propagation in the Atmosphere},
 Springer, Berlin, 1978.

\bibitem{tappert}
F. Tappert, 
The parabolic approximation method,
in {\it Wave Propagation and Underwater Acoustics},
J. B. Keller and  J. S.  Papadakis, eds.,  224-287,
Springer, Berlin (1977).

\bibitem{uscinski}
B. J. Ucsinski, 
Analytical solution of the fourth-moment equation and interpretation as a set of phase screens,
J. Opt. Soc. Am. A {\bf 2} (1985),  2077-2091.

\bibitem{vellekoop10}
I. M. Vellekoop, A. Lagendijk, and A. P. Mosk,
Exploiting disorder for perfect focusing,
Nature Photon. {\bf 4} (2010), 320-322.

 \bibitem{vellekoop07}
I. M. Vellekoop and A. P. Mosk,
Focusing coherent light through opaque strongly scattering media,
Opt. Lett. {\bf 32} (2007), 2309-2311.

\bibitem{vellekoop08}
I. M. Vellekoop and A. P. Mosk,
Universal optimal transmission of light through disordered materials,
Phys. Rev. Lett. {\bf 101} (2008), 120601.

\end{thebibliography}
\end{document}